\definecolor{darkgreen}{rgb}{0,0.75,0}
\definecolor{darkred}{rgb}{0.75,0,0}
\definecolor{darkmagenta}{rgb}{0.5,0,0.5}
\newcommand{\arxiv}[1]{{\tt \href{http://arxiv.org/abs/#1}{arXiv:#1}}}
\newcommand{\mr}[1]{{\tt \href{http://www.ams.org/mathscinet-getitem?mr=#1}{MR#1}}}
\newcommand{\floor}[1]{\left\lfloor {#1} \right\rfloor}
\newcommand{\ghost}[1]{\textcolor{white}{#1}}
\newcommand{\old}[1]{}
\newcommand{\moniker}[1]{{\em (#1)}}
\newcommand{\dmoniker}[1]{(#1)}
\newcommand{\abs}[1]{{\left\vert\kern-0.25ex #1
    \kern-0.25ex\right\vert}}
\newcommand{\eqindist}{\stackrel{d}{=}}				
\newcommand{\ttk}{2^{2^{k}}}
\DeclareRobustCommand{\SkipTocEntry}[5]{}
\newtheorem{theorem}{Theorem}[section]
\newtheorem{prop}[theorem]{Proposition}
\newtheorem{lemma}[theorem]{Lemma}
\newtheorem{lem}[theorem]{Lemma}
\newtheorem{corollary}[theorem]{Corollary}
\newtheorem{conjecture}[theorem]{Conjecture}
\newtheorem{open}[theorem]{Open problem}
\newtheorem{proposition}[theorem]{Proposition}
\theoremstyle{remark}
\newtheorem*{remark}{Remark}
\newtheorem{example}{Example}
\newtheorem{rem}{Remark}
\numberwithin{counter}{section}
\theoremstyle{definition}
\newtheorem{definition}[theorem]{Definition}
\def\liminf{\mathop{\rm lim\,inf}\limits}
\def\xx{\mathbf{x}}
\def\00{\mathbf{0}}
\def\Var{\mathop{\mathrm{Var}}}
\def\zero{\mathbf{0}}
\def\one{\mathbf{1}}
\def\N{\mathbb{N}}
\def\Z{\mathbb{Z}}
\def\R{\mathbb{R}}
\def\EE{\mathbb{E}}
\def\PP{\mathbb{P}}
\def\eps{\epsilon}
\def\var{\operatorname{Var}}
\def\aut{\operatorname{Aut}}
\newcommand\norm[1]{\left\lVert#1\right\rVert} 
\begin{document}

\title{The divisible sandpile at critical density}

\author[Levine, Murugan, Peres, Ugurcan]{Lionel Levine, Mathav Murugan, Yuval Peres and Baris Evren Ugurcan}

\address{Lionel Levine, Department of Mathematics, Cornell University, Ithaca, NY 14853, USA. {\tt \url{http://www.math.cornell.edu/~levine}}}
\address{Department of Mathematics, University of British Columbia and Pacific Institute for the Mathematical Sciences, Vancouver, BC V6T 1Z2, Canada. mathav@math.ubc.ca}
\address{Yuval Peres, Microsoft Research, Redmond, WA 98052, USA. peres@microsoft.com}
\address{Baris Evren Ugurcan, Department of Mathematics, Cornell University, Ithaca, NY 14853, USA. beu4@cornell.edu}

\date{July 28, 2015}
\keywords{divisible sandpile, stabilizability, bi-Laplacian Gaussian field}
\subjclass[2010]{
60J45,  	
60G15,  	
82C20,	
82C26 	
}
\thanks{The first author was supported by \href{http://www.nsf.gov/awardsearch/showAward?AWD_ID=1243606}{NSF DMS-1243606} and a Sloan Fellowship.}

\begin{abstract}
The divisible sandpile starts with i.i.d.\ random variables (``masses'') at the vertices of an infinite, vertex-transitive graph, and redistributes mass by a local toppling rule in an attempt to make all masses $\leq 1$. The process stabilizes almost surely if $m<1$ and it almost surely does not stabilize if $m>1$, where $m$ is the mean mass per vertex.
The main result of this paper is that in the critical case $m=1$, if the initial masses have finite variance, then the process almost surely does not stabilize. To give quantitative estimates on a finite graph, we relate the number of topplings to a discrete bi-Laplacian Gaussian field.
\end{abstract}

\maketitle

\section{Introduction}

This paper is concerned with the dichotomy between \emph{stabilizing} and \emph{exploding} configurations in a model of mass redistribution, the \emph{divisible sandpile model}.
The main interest in this model is twofold.
First, it is a natural starting place for the analogous and more difficult dichotomy in the \emph{abelian sandpile model}. Second, the divisible sandpile itself leads to interesting questions in potential theory. For example, under what conditions must a random harmonic function be an almost sure constant? (Lemma~\ref{l.stationaryharmonic} gives some sufficient conditions.)
Both the motivation for this paper and many of the proof techniques are directly inspired by the work
 of Fey, Meester and Redig \cite{FMR}.

By a \emph{graph} $G=(V,E)$ we will always mean a connected, locally finite and undirected graph with vertex set $V$ and edge set $E$. We write $x \sim y$ to mean that $(x,y) \in E$, and $\deg(x)$ for the number of $y$ such that $x \sim y$.
A \emph{divisible sandpile configuration} on $G$ is a function $s: V \to \R$. We refer to $s(x)$ as an amount of `mass' present at vertex $x$; a negative value of $s(x)$ can be imagined as a `hole' waiting to be filled by mass. A vertex $x \in V$ is called \emph{unstable} if $s(x)> 1$. An unstable vertex $x$ \emph{topples} by keeping mass $1$ for itself and distributing the excess $s(x) -1$ equally among its neighbors $y \sim x$.
At each discrete time step, all unstable vertices topple simultaneously. (This parallel toppling assumption is mainly for simplicity; in Section \ref{s.maindefs} we will relax it.)
The following trivial consequence of the toppling rule is worth emphasizing: if for a particular vertex $x$ the inequality $s(x) \geq 1$ holds at some time, then it holds at all later times.

Note that the entire system evolves deterministically once an initial condition $s$ is fixed.
The initial $s$ can be deterministic or random; below we will see one example of each type.
Write $u_n(x)$ for the total amount of mass emitted before time $n$ from $x$ to one of its neighbors. (By the symmetry of the toppling rule, it does not matter which neighbor.) This quantity increases with $n$, so $u_n \uparrow u$ as $n \uparrow \infty$ for a function $u : V \to [0,\infty]$. We call this function $u$ the \emph{odometer} of $s$. Note that if $u(x)=\infty$ for some $x$, then each neighbor of $x$ receives an infinite amount of mass from $x$, so $u(y) = \infty$ for all  $y \sim x$.  We therefore have the following dichotomy:
	\begin{align*} \text{Either \; $u(x)<\infty$ for all $x \in V$,} \\ \text{or \; $u(x) = \infty$ for all $x \in V$.}  \end{align*}
In the former case we say that $s$ \emph{stabilizes}, and in the latter case we say that $s$ \emph{explodes}.

 The following theme repeats itself at several places: \emph{The question of whether $s$ stabilizes depends not only on $s$ itself but also on the underlying graph}.  For instance, fixing a vertex $o$, we will see that the divisible sandpile
	\[ s(x) = \begin{cases} 1 & x \neq o \\ 2 & x=o \end{cases} \]
stabilizes on $G$ if and only if the simple random walk on $G$ is transient (Lemma~\ref{l.diracplusone}).

Our main result treats the case of initial masses $s(x)$ that are independent and identically distributed (i.i.d.) random variables with finite variance. Write $\EE s$ and $\Var s$ for the common mean and variance of the $s(x)$.  The mean $\EE s$ is sometimes called the \emph{density} (in the physical sense of the word, mass per unit volume). Because sites topple when their mass exceeds $1$, intuition suggests that the density should be the main determiner of whether or not $s$ stabilizes: the higher the density, the harder it is to stabilize.  Indeed, we will see that $s$ stabilizes almost surely if  $\EE s <1$  (Lemma \ref{l.stable}) and explodes almost surely if $\EE s >1$ (Lemma \ref{l.notstab}).
Our main result addresses the critical case $\EE s =1$.

\begin{theorem} \label{main}
Let $s$ be an i.i.d.\ divisible sandpile on an infinite, vertex-transitive graph, with $\EE s=1$ and $0 < \var s < \infty$.  Then $s$ almost surely does not stabilize.
\end{theorem}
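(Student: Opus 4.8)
The plan is to argue by contradiction: suppose $s$ stabilizes with positive probability. By the dichotomy and ergodicity of the i.i.d.\ field under the vertex-transitive automorphism group, stabilization is a tail-type event invariant under the transitive action, so it has probability $0$ or $1$; assume it has probability $1$. Then the odometer $u$ is almost surely finite at every vertex, and the stabilized configuration $\bar s = s + \Delta u$ (where $\Delta$ is the graph Laplacian, $\Delta u(x) = \sum_{y\sim x}\frac{1}{\deg x}(u(y)-u(x))$ in the normalization matching the toppling rule) satisfies $\bar s \le 1$ everywhere. Taking expectations and using $\EE s = 1$ gives $\EE[\Delta u] \le 0$ at every vertex; by transitivity $\EE[u]$ is constant (if finite) or identically $+\infty$, and in either case $\EE[\Delta u(x)] = 0$, forcing $\bar s = 1$ almost surely at every vertex. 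So in fact stabilization would have to produce the all-ones configuration.

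The next step is to extract a contradiction from $\bar s \equiv 1$. Since $\bar s = s + \Delta u$ and $\bar s \equiv 1$, we get $\Delta u = 1 - s$ almost surely, i.e.\ $u$ is a (random) solution of a Poisson equation whose right-hand side $1 - s$ is an i.i.d.\ field with mean $0$ and variance $\sigma^2 := \var s \in (0,\infty)$. I would then localize: fix a finite connected set $A \subset V$ and compare $u$ on $A$ with the solution of the same Poisson equation on $A$ with, say, $u$ prescribed on the boundary $\partial A$. Concretely, writing $g_A$ for the Green's function of the walk killed on exiting $A$, one has for $x \in A$
\[
u(x) \;=\; \sum_{y \in A} g_A(x,y)\,\big(s(y)-1\big) \;+\; h_A(x),
\]
where $h_A$ is the harmonic extension into $A$ of the boundary values of $u$. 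The key point is that the first term has variance of order $\sigma^2 \sum_{y} g_A(x,y)^2$, which blows up as $A \uparrow V$ precisely because the walk on an infinite vertex-transitive graph has $\sum_y g_A(x,y)^2 \to \infty$ (this holds both in the recurrent case, trivially, and in the transient case, where $g_A \uparrow g$ and $\sum_y g(x,y)^2 = \infty$ on any infinite transitive graph — this is essentially the statement that the bi-Laplacian Green's function diverges, the discrete analogue of $\int |\nabla G|^2 = \infty$ type identities alluded to in the abstract). Meanwhile $u \ge 0$ everywhere, so $h_A(x) = u(x) - \sum_y g_A(x,y)(s(y)-1)$ is a random harmonic function on $A$, and one shows using $u \ge 0$ plus a Harnack-type or averaging argument that $h_A$ cannot absorb unbounded fluctuations while keeping $u$ nonnegative. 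Quantitatively: $\EE[h_A(o)] = \EE[u(o)]$ is a fixed finite constant $c$, yet $h_A(o)$ must dominate $-\sum_y g_A(o,y)(s(y)-1)$, a centered random variable with variance $\to\infty$; a second-moment / anticoncentration argument then shows $\PP(u(o) < 0)$ is bounded below, contradicting $u \ge 0$.

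To make the last step rigorous I would invoke the ergodic/stationarity machinery hinted at in Lemma~\ref{l.stationaryharmonic} and Lemma~\ref{l.stationaryharmonic}'s neighbors in the paper: the field $(u(x))_{x\in V}$ is jointly stationary and ergodic under the automorphism group, hence so is $(h_A(x))$ after the deterministic localization, and a stationary harmonic function on a vertex-transitive graph with an $L^1$ bound is a.s.\ constant; combining this with the growing-variance estimate on the Green's-function term pins down the contradiction. The main obstacle I anticipate is precisely this quantitative lower bound on $\PP(u(o)<0)$ — i.e.\ showing that the harmonic part genuinely cannot compensate, which requires controlling the joint law of $h_A$ and the Poisson term and is where the finite-variance hypothesis is used in an essential, non-negotiable way (with infinite variance the analogous sum can fail to have the needed anticoncentration, and indeed the theorem is false there). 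A secondary technical point is handling the case split between recurrent and transient $G$ uniformly; in the recurrent case the Green's function term already has infinite variance for any exhaustion, so the argument simplifies, while the transient case is the one genuinely requiring the divergence $\sum_y g(o,y)^2 = \infty$, which I would prove by a direct computation using vertex-transitivity and the return-probability decay.
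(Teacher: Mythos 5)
Your argument for the ``singly transient'' regime is essentially the paper's (Lemmas~\ref{l.nondeg} and~\ref{l.singlytransient}): bound $u(o)$ below by the Green potential $r^{-1}\sum_{y\in A} g_A(o,y)(s(y)-1)$ (the paper does this via nested-volume toppling, you via the maximum principle; both work), and let the Lindeberg CLT contradict the a.s.\ finiteness of $u(o)$ as $\nu_A^2=\sum_y g_A(o,y)^2\to\infty$. But the load-bearing claim that $\sum_{y\in V} g(o,y)^2=\infty$ on \emph{every} infinite transient vertex-transitive graph is false, and this is a genuine gap, not a computation left to the reader: on $\Z^d$ with $d\ge 5$ one has $g(o,y)\asymp \norm{y}^{2-d}$, so $\sum_y g(o,y)^2\asymp\sum_y \norm{y}^{4-2d}<\infty$. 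In that ``doubly transient'' case your variance $\sigma^2\sum_y g_A(o,y)^2$ stays bounded as $A\uparrow V$, the anticoncentration argument produces no contradiction, and your proposal has no fallback. This is exactly why the paper has a separate Section~\ref{s.dtrans}: there one uses the a.s.\ convergence (guaranteed precisely by $\sum_y g(o,y)^2<\infty$) of $v(x)=r^{-1}\sum_y g(x,y)(s(y)-1)$ to build an exact solution of $\Delta v=1-s$, shows $h=v-u_\infty$ is a harmonic function with automorphism-invariant law, hence a.s.\ constant (Lemma~\ref{l.stationaryharmonic}), and derives the contradiction from $u_\infty\ge 0$ together with $v$ being a.s.\ unbounded below (Lemma~\ref{l.vfun}). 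You cite the stationary-harmonic lemma only as an auxiliary device for your $h_A$ (whose law, note, is not $\Gamma$-invariant since it depends on $A$); the actual doubly transient argument is missing.

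Two further gaps. First, your reduction to $s_\infty\equiv 1$ asserts that ``$\EE u$ is constant or identically $+\infty$, and in either case $\EE[\Delta u(x)]=0$''; when $\EE u_\infty(o)=\infty$ this is an $\infty-\infty$ cancellation and does not follow, and within the proof by contradiction you cannot exclude that case. The paper's Proposition~\ref{p.cden} proves $\EE s_\infty(o)=\EE s(o)$ by working with the parallel toppling at finite times and a uniform-integrability argument (Lemma~\ref{l.unifint}), precisely to avoid assuming any moment of $u_\infty$. Second, in the recurrent case your CLT route needs the Lindeberg ratio $\max_y g_A(o,y)/\nu_A=g_A(o,o)/\nu_A\to 0$, which you cannot get from $g(o,o)<\infty$ as in the transient case and would have to prove separately; the paper sidesteps this entirely with a softer argument (resample $s(o)$, topple in two stages, and invoke Lemma~\ref{l.diracplusone} to conclude $G$ would have to be transient), which moreover needs no variance assumption.
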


Our theme that `stabilizability depends on the underlying graph' repeats again in the proof of Theorem \ref{main}.
The proof splits into three cases depending on the graph. The cases in increasing order of difficulty are
\begin{itemize}
 \item recurrent (Lemma \ref{l.recurrent}). Examples: $\Z, \Z^2$.
 \item transient with $\sum_{x \in V} g(o,x)^2 = \infty$  (Section \ref{s.strans}). Examples: $\Z^3,\Z^4$.
 \item transient with $\sum_{x \in V} g(o,x)^2 <\infty$ (Section \ref{s.dtrans}). Examples: $\Z^d$ with $d\ge 5$.
\end{itemize}
Here $g$ denotes Green's function: $g(o,x)$ is the expected number of visits to $x$ by a simple random walk started at $o$. The reason for the order of difficulty is that `stabilization is harder in lower dimensions,' in a sense formalized by Theorem~\ref{p.torus} below.

\subsection{Potential theory of real-valued functions}

The \emph{graph Laplacian} $\Delta$ acts on functions $u: V \to \R$ by
 	\begin{equation} \label{e.thelaplacian} \Delta u(x) = \sum_{y \sim x} (u(y) - u(x)). \end{equation}
Using a `least action principle' (Proposition \ref{lap}), the question of whether a divisible sandpile stabilizes can be reformulated as a question in potential theory:

  \begin{quote}
  Given a function $s: V \to \R$,
 does there exist a nonnegative function
 $u : V \to \R$ such that $s + \Delta u \le 1$ pointwise?
 \end{quote}

  \subsection{Potential theory of integer-valued functions}
In the related \emph{abelian sandpile model},
configurations are integer-valued functions $s : V \to \Z$. We think of $s(x)$ as a number of particles present at $x$.  A vertex $x \in V$ is \emph{unstable} if it
 has at least $\deg(x)$ particles. An unstable site $x$ \emph{topples} by sending one particle to each of its $\deg(x)$ neighbors.  This model also has a dichotomy between stabilizing ($u<\infty$) and exploding ($u \equiv \infty$), which can be reformulated as follows:

 \begin{quote}
 Given a function $s: V \to \Z$, does there exist a nonnegative
function $u: V \to \Z$ such that $s + \Delta u \le \deg - 1$ pointwise?
\end{quote}

The restriction that $u$ must be integer-valued introduces new difficulties that are not present in the divisible sandpile model. The first step in the proof of Theorem~\ref{main} is to argue that if $\EE s =1$ and $s$ stabilizes, then it necessarily stabilizes to the all $1$ configuration. 
This step fails for the abelian sandpile except in dimension~$1$.
Indeed, a result analogous to Theorem~\ref{main} does hold for the abelian sandpile when the underlying graph is $\Z$ \cite[Theorem~3.2]{FMR}, but no such result can hold in higher dimensions:
The density $\EE s$ alone is not enough to determine whether an abelian sandpile $s$ on $\Z^d$ stabilizes, if $d < \EE s < 2d-1$ (see \cite[Section 5]{FR}, \cite[Theorem 3.1]{FMR} and \cite[Proposition 1.4]{FLP}; the essential idea in these arguments arose first in bootstrap percolation \cite{Ent,Sch}).

We would like to highlight an open problem: Given a probability distribution $\mu$ on $\Z$ (say, supported on $\{0,1,2,3,4\}$ with rational probabilities) is it algorithmically decidable whether the i.i.d.\ abelian sandpile on $\Z^2$ with marginal $\mu$ stabilizes almost surely?

\subsection{Quantitative estimates and bi-Laplacian field}

For a \emph{finite} connected graph $G=(V,E)$, the divisible sandpile $s : V \to \R$ stabilizes if and only if $ \sum_{x \in V} s(x) \le \abs{V}$.
Our next result gives the order of the odometer in a critical case when this sum is exactly $|V|$.
Specifically, to formalize the idea that `stabilization is harder in lower dimensions,' we take an identically distributed Gaussian initial condition on the discrete torus $\Z_n^d$, conditioned to have total mass $n^d$.  The expected odometer can be taken as an indication of difficulty to stabilize: How much mass must each site emit on average? According to equation \eqref{e.phi} below, the expected odometer tends to $\infty$ with $n$ in all dimensions (reflecting the failure to stabilize on the infinite lattice $\Z^d$) but it
decreases with dimension.

\begin{theorem} \label{p.torus}
Let $( \sigma(\mathbf x))_{\mathbf  x \in \Z^d_n}$ be i.i.d.\ $N(0,1)$, and consider the divisible sandpile
	\[ s_{d,n} (\mathbf  x) = 1 + \sigma(\mathbf  x) - \frac{1}{n^d} \sum_{\mathbf  y \in \Z^d_n} \sigma(\mathbf  y). \]
Then $s_{d,n}:\Z^d_n \to \R$ stabilizes to the all $1$ configuration, and there exists a constant $C_d$ such that the odometer $u_{d,n}$ satisfies
\[
C_d^{-1} \phi_d(n) \le \EE u_{d,n}(x) \le C_d \phi_d(n)
\]
for all $n \ge 2$, where $\phi_d$ is defined by
\begin{equation} \label{e.phi}
 \phi_d(n):= \begin{cases}
                          n^{3/2},  &  d=1 \\
                          n,  & d=2 \\
                          n^{1/2}, & d=3 \\
                              \log n,   & d=4  \\
                             (\log n)^{1/2}, & d \ge 5.                         \end{cases}
\end{equation}
\end{theorem}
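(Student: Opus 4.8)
The plan is to solve explicitly for the odometer using the least action principle and then compute its expectation via the spectral decomposition of the discrete Laplacian on $\Z^d_n$. Since $s_{d,n}$ has total mass exactly $n^d = |\Z^d_n|$, the configuration stabilizes to the all-$1$ configuration, and the odometer $u_{d,n}$ is the unique (up to additive constant) solution of the Poisson equation $\Delta u_{d,n} = 1 - s_{d,n} = -\sigma + \frac{1}{n^d}\sum_{\mathbf y}\sigma(\mathbf y)$, normalized so that $\sum_{\mathbf x} u_{d,n}(\mathbf x) = 0$; nonnegativity of $u_{d,n}$ needs a short separate argument (it follows from the least action principle together with the fact that the obstacle problem solution for a configuration summing to $|V|$ coincides with the Poisson solution). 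Writing $f := \sigma - \frac{1}{n^d}\sum_{\mathbf y}\sigma(\mathbf y)$, which is mean-zero on the torus, we have $u_{d,n} = -\Delta^{-1} f$ where $\Delta^{-1}$ is the inverse of the Laplacian on the orthogonal complement of the constants.

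Next I would diagonalize. The characters $\chi_{\mathbf k}(\mathbf x) = \exp(2\pi i \mathbf k \cdot \mathbf x / n)$, $\mathbf k \in \Z^d_n \setminus \{0\}$, are eigenfunctions of $-\Delta$ with eigenvalue $\lambda_{\mathbf k} = \sum_{j=1}^d 2(1 - \cos(2\pi k_j/n))$. Expanding $f = \sum_{\mathbf k \neq 0} \hat f(\mathbf k) \chi_{\mathbf k}$, we get $u_{d,n}(\mathbf x) = \sum_{\mathbf k \neq 0} \lambda_{\mathbf k}^{-1} \hat f(\mathbf k)\chi_{\mathbf k}(\mathbf x)$. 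Since the $\sigma(\mathbf x)$ are i.i.d.\ $N(0,1)$, the Fourier coefficients satisfy $\EE |\hat f(\mathbf k)|^2 = n^{-d}$ for each $\mathbf k \neq 0$. Because $u_{d,n}(\mathbf x)$ is Gaussian with mean zero, $\EE u_{d,n}(\mathbf x) \ne \EE |u_{d,n}(\mathbf x)|$; the statement must be read with $u_{d,n}(\mathbf x) \ge 0$, so $\EE u_{d,n}(\mathbf x) = \EE |u_{d,n}(\mathbf x)|$ up to the nonnegativity fact, and by Gaussianity this is comparable to the standard deviation $\big(\Var u_{d,n}(\mathbf x)\big)^{1/2} = \big(\EE u_{d,n}(\mathbf x)^2\big)^{1/2}$. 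By Parseval and vertex-transitivity,
\[
\EE\, u_{d,n}(\mathbf x)^2 = \sum_{\mathbf k \neq 0} \lambda_{\mathbf k}^{-2}\, \EE|\hat f(\mathbf k)|^2 = \frac{1}{n^d}\sum_{\mathbf k \in \Z^d_n \setminus \{0\}} \lambda_{\mathbf k}^{-2}.
\]
So the whole problem reduces to estimating the lattice sum $S_d(n) := n^{-d}\sum_{\mathbf k \neq 0}\lambda_{\mathbf k}^{-2}$ and showing $S_d(n) \asymp \phi_d(n)^2$.

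The main work, then, is the asymptotics of $S_d(n)$. Using $\lambda_{\mathbf k} \asymp \|\mathbf k\|^2/n^2$ for $\mathbf k$ in the fundamental domain $(-n/2, n/2]^d$ (with $\|\cdot\|$ the Euclidean norm and the comparison uniform), one gets $S_d(n) \asymp n^{4-d}\sum_{\mathbf k \neq 0}\|\mathbf k\|^{-4}$, where the sum ranges over $\mathbf k \in \Z^d \cap (-n/2,n/2]^d$. Comparing this sum to the integral $\int \|\mathbf x\|^{-4}\,d\mathbf x$ over an annulus $1 \le \|\mathbf x\| \le n$: it converges as $n \to \infty$ for $d \ge 5$ (giving $S_d(n) \asymp n^{4-d}$... wait, that would blow down — so here one must be more careful: for $d\ge 5$ the sum $\sum_{\mathbf k\ne 0}\|\mathbf k\|^{-4}$ over all of $\Z^d$ converges, but $S_d(n)\asymp n^{4-d}$ would then go to $0$, contradicting $\phi_5 = (\log n)^{1/2}$; the resolution is that $\lambda_{\mathbf k}^{-2} \asymp n^4\|\mathbf k\|^{-4}$ is the right bound only away from $\mathbf k = 0$, and the dominant contribution for large $d$ comes from $\|\mathbf k\| \asymp 1$ where $\lambda_{\mathbf k}^{-2} \asymp 1$, so actually $\sum_{\mathbf k \ne 0}\lambda_{\mathbf k}^{-2} \asymp n^{4}\cdot(\text{something})$; the honest computation is $\sum_{\mathbf k \ne 0}\lambda_{\mathbf k}^{-2} \asymp \sum_{\mathbf k \ne 0} n^4 \|\mathbf k\|^{-4}$ only when this is $\gg $ the number of terms — I will do the integral test case by case). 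Concretely: for $d = 1,2,3$ the sum $\sum\|\mathbf k\|^{-4}$ over the box is $\Theta(1)$ dominated by small $\|\mathbf k\|$, giving $S_d(n)\asymp n^{4-d}$, i.e.\ $\phi_d(n)^2 = n^{3}, n^{2}, n$ — matching; for $d = 4$ it is $\Theta(\log n)$, giving $S_4(n) \asymp \log n = \phi_4(n)^2$; for $d \ge 5$ the correct bookkeeping (the integral $\int_1^n r^{d-1}\cdot r^{-4}\,dr$ diverges like $n^{d-4}$, so $\sum_{1\le\|\mathbf k\|\le n}\|\mathbf k\|^{-4}\asymp n^{d-4}$ is wrong — rather it converges, dominated by $\|\mathbf k\|\asymp 1$, hence $\Theta(1)$), giving $S_d(n) \asymp n^{4-d}$, which still does not match $\log n$. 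The genuine subtlety I expect to be the crux is therefore the correct treatment of the region $\|\mathbf k\| \asymp n$ versus $\|\mathbf k\| \asymp 1$: the eigenvalues $\lambda_{\mathbf k}$ are bounded above (by $4d$) but not bounded below by a constant — the smallest nonzero ones are $\asymp n^{-2}$ — so $\lambda_{\mathbf k}^{-2}$ ranges from $\Theta(1)$ up to $\Theta(n^4)$, and one must check which end of this range dominates the sum $n^{-d}\sum_{\mathbf k}\lambda_{\mathbf k}^{-2}$. Splitting dyadically in $\|\mathbf k\|$ and summing the contributions $\asymp n^{-d}\cdot 2^{jd}\cdot(2^{2j}/n^2)^{-2} = n^{4-d}2^{j(d-4)}$ over $1 \le 2^j \le n$, the geometric sum is dominated by $j = 0$ when $d < 4$ (total $\asymp n^{4-d}$), is $\asymp \log n$ when $d = 4$, and is dominated by $2^j \asymp n$ when $d > 4$ (total $\asymp n^{4-d}\cdot n^{d-4} = \Theta(1)$) — but the low-frequency regime also contributes the $\Theta(1)$ from bounded $\lambda_{\mathbf k}$. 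Reconciling these shows $S_d(n) \asymp n^3, n^2, n, \log n, \Theta(1)$ for $d = 1,2,3,4,\ge 5$ respectively, but since $\phi_5(n) = (\log n)^{1/2}$ and $\phi_5(n)^2 = \log n \not\asymp 1$, I would need to recheck the $d \ge 5$ eigenvalue estimate: in fact for $d \ge 5$ the contribution is $\Theta(1)$ from high frequency but there is a logarithmically-growing piece I'm missing, coming from the precise behavior of $\lambda_{\mathbf k}$ near the corners of the fundamental domain, where several $\cos(2\pi k_j/n)$ are simultaneously near $-1$. The careful dyadic analysis of $\sum_{\mathbf k}\lambda_{\mathbf k}^{-2}$ over the full discrete torus, correctly accounting for all frequency scales (in particular the high-frequency corner contributions), is the single step I expect to consume most of the proof; everything else (the least action principle giving $u_{d,n} = -\Delta^{-1}f$, nonnegativity, Parseval, and comparability of a centered Gaussian's absolute first moment to its standard deviation) is routine.
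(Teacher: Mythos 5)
There is a genuine gap, and it starts at the very first reduction. You identify the odometer with the solution $v$ of $\Delta v = 1-s_{d,n}$ normalized by $\sum_{\mathbf x} v(\mathbf x)=0$ and assert that its nonnegativity is a routine consequence of the least action principle. That is false: the mean-zero-sum solution is a nondegenerate centered Gaussian vector, so it is negative somewhere almost surely. What the least action principle actually gives (this is the paper's Lemma~\ref{l.massn} and Proposition~\ref{t.finite}) is that the odometer is the Poisson solution shifted to have \emph{minimum} zero, $u_{d,n} = v - \min v$. Consequently $\EE u_{d,n}(\mathbf x) = \EE\bigl(v(\mathbf x) - \min v\bigr) = \EE \max_{\mathbf y} v(\mathbf y)$: the quantity to estimate is the expected \emph{maximum} of a bi-Laplacian Gaussian field over the torus, not the absolute first moment (equivalently the standard deviation) of a single coordinate, and these are not the same random-variable functional.

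This misidentification is exactly what produces the contradictions you ran into. Your lattice sum $S_d(n)=n^{-d}\sum_{\mathbf k\neq 0}\lambda_{\mathbf k}^{-2}$ computes $\EE v(\mathbf x)^2$, whose square root is of order $n^{3/2}, n, n^{1/2}, (\log n)^{1/2}, 1$ for $d=1,2,3,4,\ge 5$. This coincides with $\phi_d(n)$ only for $d\le 3$; for $d=4$ it gives $(\log n)^{1/2}$ rather than $\log n$, and for $d\ge 5$ it gives $\Theta(1)$ rather than $(\log n)^{1/2}$. The missing logarithm in high dimensions is not hiding in a refined eigenvalue estimate near the corners of the Brillouin zone --- the sum really is $\Theta(1)$ there --- it comes from taking the maximum of $\asymp n^d$ weakly correlated Gaussians of bounded variance, which is of order $(\log n^d)^{1/2}$. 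So the step you dismissed as routine is in fact the heart of the theorem: one must control the increments $\EE(\eta_{\mathbf 0}-\eta_{\mathbf x})^2$ (your Fourier computation, applied to differences, is essentially the paper's \eqref{e.four-form}--\eqref{Fnd} and yields the bounds $\psi_d(n,r)$ of \eqref{e.psi}), and then convert metric-entropy information into bounds on $\EE\max\eta$ via chaining --- Dudley's bound for the upper bound and Talagrand's majorizing measure theorem / Sudakov minoration for the lower bound. Even in $d\le 3$, where the numerics happen to agree, your argument as written does not show that $\EE\max v$ is comparable to the single-site standard deviation, so the proof is incomplete in every dimension; the Fourier groundwork is salvageable, but the identification $u_{d,n}=v-\min v$ and the chaining step must be added.
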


The first step in computing these orders is proving an equality in law between the odometer $u_{d,n}$ and a certain `discrete bi-Laplacian Gaussian field' shifted to have minimum value $0$.  This equality in law actually holds for any finite connected graph, as detailed in the next proposition.

\begin{proposition} \label{t.finite}
Let $G= (V, E)$ be a finite connected graph.
Let $( \sigma(x))_{x \in V}$ be i.i.d.\ $N(0,1)$, and consider the divisible sandpile
	\[ s(x) = 1 + \sigma(x) - \frac{1}{\abs{V}} \sum_{y \in V} \sigma(y). \]
Then $s$ stabilizes to the all $1$ configuration, and the distribution of its odometer
$u : V \to [0,\infty)$ is
	\[ (u(x))_{x \in V} \eqindist \left(  \eta(x) - \min \eta \right)_{x \in V} \]
where the $\eta(x)$ are jointly Gaussian with mean zero and covariance
	\[ \mathbb{E}[\eta(x) \eta(y)] = \frac{1}{\deg(x)\deg(y)}\sum_{z \in V} g(z,x) g(z,y) \]
	where $g$ is defined by  $g(x,y) = \frac{1}{\abs{V}} \sum_{z \in V} g^z(x,y)$ and $g^z(x,y)$ is the expected number of visits to $y$ by the simple random walk started at $x$ before hitting $z$.
\end{proposition}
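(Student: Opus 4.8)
The plan is to show that the odometer $u$ is the unique solution of a discrete Poisson equation normalized to have minimum value $0$, and then to recognize that solution as an explicit linear image of the i.i.d.\ Gaussian vector $(\sigma(z))_{z\in V}$, whose covariance is exactly the one claimed. First I would note that $\sum_{x}s(x)=|V|$, so by the stabilization criterion recalled above $s$ stabilizes; writing $r=s+\Delta u$ for the stabilized configuration and using mass conservation $\sum_x \Delta u(x)=0$, we get $\sum_x r(x)=|V|$, and since $r\le 1$ pointwise this forces $r\equiv 1$. Hence $\Delta u = 1-s$. By the least action principle (Proposition~\ref{lap}) $u$ is the pointwise-minimal nonnegative function with $s+\Delta u\le 1$; if $\min u=c>0$ then $u-c$ would be a strictly smaller such function (since $\Delta(u-c)=\Delta u$), a contradiction, so $\min u=0$. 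Because $\Delta$ on a finite connected graph has kernel equal to the constants, $u$ is therefore the \emph{unique} solution of $\Delta u = 1-s$ with minimum $0$.

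The heart of the argument is to exhibit one explicit solution. Observe $1-s(x)=\bar\sigma-\sigma(x)$ with $\bar\sigma=|V|^{-1}\sum_y\sigma(y)$. Set $\gamma(x,y):=g(x,y)/\deg(y)$; reversibility of simple random walk in the form $\deg(x)g^z(x,y)=\deg(y)g^z(y,x)$ (hence $\deg(x)g(x,y)=\deg(y)g(y,x)$) shows $\gamma$ is symmetric and $\gamma(x,y)=g(y,x)/\deg(x)$. The key identity I would prove is
\[ \Delta_x\gamma(x,y)=\frac{1}{|V|}-\delta_y(x)\qquad(x,y\in V). \]
Granting it, the random function $W(x):=\sum_z\gamma(x,z)\sigma(z)=\frac{1}{\deg(x)}\sum_z g(z,x)\sigma(z)$ satisfies $\Delta W(x)=\sum_z(\tfrac1{|V|}-\delta_z(x))\sigma(z)=\bar\sigma-\sigma(x)=1-s(x)$, so by the previous paragraph $u=W-\min W$.

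To prove the identity, I would use the one-step decomposition $g^z(x,y)=\delta_y(x)+\deg(x)^{-1}\sum_{x'\sim x}g^z(x',y)$, valid for $x\ne z$ (with $g^z(z,y)=0$), i.e.\ $\Delta_x g^z(x,y)=-\deg(x)\delta_y(x)$ for $x\ne z$. Averaging $\gamma^z(x,y):=g^z(x,y)/\deg(y)$ over $z$, the $|V|-1$ terms with $z\ne x$ contribute $-(|V|-1)\delta_y(x)$, and the single term $z=x$ contributes $\sum_{x'\sim x}\gamma^x(x',y)=\deg(y)^{-1}\sum_{x'\sim x}g^x(x',y)$. Now $\deg(x)^{-1}\sum_{x'\sim x}g^x(x',y)$ is precisely the expected number of visits to $y$ before the first return to $x$, for the walk started at $x$, which by stationarity of $\pi\propto\deg$ equals $\deg(y)/\deg(x)$ when $y\ne x$ and $0$ when $y=x$; so the $z=x$ term is $1-\delta_y(x)$, and summing gives $\Delta_x(|V|\gamma(x,y))=1-|V|\delta_y(x)$, as required. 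Finally, $(W(x))_{x\in V}$ is a linear image of an i.i.d.\ $N(0,1)$ vector, hence jointly Gaussian with mean zero, and $\mathbb{E}[W(x)W(y)]=\frac{1}{\deg(x)\deg(y)}\sum_z g(z,x)g(z,y)$, which is exactly the covariance of $(\eta(x))$; thus $(W(x))_x\eqindist(\eta(x))_x$, and applying the map $v\mapsto v-\min v$ to both sides yields $(u(x))_x\eqindist(\eta(x)-\min\eta)_x$.

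I expect the main obstacle to be Step 2 — the explicit inversion of the Laplacian, i.e.\ checking that $x\mapsto\gamma(x,y)$ is a fundamental solution of $-\Delta$ modulo constants — and within it the excursion identity $\sum_{x'\sim x}g^x(x',y)=\deg(y)$ for $x\ne y$ (equivalently, the expected number of visits to $y$ before returning to $x$ is $\pi(y)/\pi(x)$). This is the one genuinely non-formal input, and it is exactly where reversibility and the explicit form of the stationary measure are used; the reduction in Step 1 and the Gaussian identification in Step 3 are routine bookkeeping by comparison.
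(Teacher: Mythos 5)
Your proof is correct, and at the key step it takes a genuinely different (and arguably cleaner) route than the paper. The paper works with the centered field: for each $z$ it sets $v^{z}(y)=\frac{1}{\deg(y)}\sum_{x}g^{z}(x,y)(s(x)-1)$, averages over $z$ to get $v$ with $\Delta v=1-s$, and then faces the fact that the covariance of $v$ is the claimed one \emph{minus} a constant $K^{2}/|V|$, where $K=\frac{1}{\deg(y)}\sum_{w}g(w,y)$; it repairs this by adding an independent $N(0,K^{2}/|V|)$ random constant $C$, noting $\eta\eqindist v+C$ and that $v+C-\min(v+C)=v-\min v$. You instead convolve the Green kernel with the raw i.i.d.\ field $\sigma$ rather than with $s-1$: your identity $\Delta_x\bigl[g(x,y)/\deg(y)\bigr]=\frac{1}{|V|}-\delta_y(x)$ produces the mean subtraction automatically, so $W(x)=\frac{1}{\deg(x)}\sum_z g(z,x)\sigma(z)$ has exactly the covariance of $\eta$ and no auxiliary Gaussian shift is needed; the reduction to ``unique solution of $\Delta u=1-s$ with $\min u=0$'' is the same in both arguments (the paper packages it as Lemma~\ref{l.massn}). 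The other difference is how the Green-function identity is obtained: the paper takes as known the second-variable identity $\Delta_y\bigl[g^{z}(x,y)/\deg(y)\bigr]=\delta_z(y)-\delta_x(y)$ and never needs the first-variable version, whereas you prove the first-variable version directly via the one-step decomposition plus the occupation-before-return identity $\mathbb{E}_x[\text{visits to }y\text{ before returning to }x]=\deg(y)/\deg(x)$, a standard Kac-type fact for reversible chains. That excursion computation is correct, but note you could shortcut it: having already established the symmetry $g(x,y)/\deg(y)=g(y,x)/\deg(x)$ from reversibility, your identity follows immediately by averaging the standard second-variable identity over $z$ and transposing. In short, your approach buys a cleaner Gaussian identification at the cost of one extra (standard) probabilistic input, which is in any case avoidable via the symmetry you proved.
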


Proposition~\ref{t.finite} suggests the possibility of a central limit theorem for the divisible sandpile odometer on $\Z_n^d$: We believe that if $\sigma$ is identically distributed with zero mean and finite variance, then the odometer, after a suitable shift and rescaling,
converges weakly as $n \to \infty$ to the bi-Laplacian Gaussian field on $\R^d$.

\subsection{Proof ideas}

By conservation of density (Proposition~\ref{p.cden}) the assumption $\EE s = 1$ implies that if $s$ stabilizes then it must stabilize to the all $1$ configuration, so that the odometer $u$ satisfies
	\begin{equation} \label{e.allones} \Delta u = 1-s. \end{equation}
where $\Delta$ is the Laplacian \eqref{e.thelaplacian}. This relation leads to a contradiction in one of three ways:

\begin{itemize}
\item If $G$ is recurrent (examples: $\Z,\Z^2$), then $1+\beta \delta_o$ does not stabilize (Lemma~\ref{l.diracplusone}). By resampling the random variable $s(o)$ we derive a contradiction from \eqref{e.allones}.

\item If $G$ is simply transient (examples: $\Z^3, \Z^4$) then we can attempt to solve \eqref{e.allones} for $u$, writing
	\begin{equation} \label{e.greenseries} u(y) = \sum_x g(x,y) (s(x)-1) \end{equation}
where $g$ is Green's function. The sum on the right side diverges a.s.\ if taken over all $x \in V$ (since $g(\cdot,y)$ is not square-summable) but we can stabilize $s$ in nested finite subsets $V_n \uparrow V$ instead. The corresponding finite sums, suitably normalized, tend in distribution to a mean zero Gaussian by the Lindeberg central limit theorem, contradicting the nonnegativity of $u(y)$.

\item If $G$ is doubly transient (example $\Z^d$ for $d \geq 5$) then the right side of \eqref{e.greenseries} converges a.s.. The difference between the left and right sides is then a random harmonic function with automorphism-invariant law.  The proof is completed by showing that under mild moment assumptions any such function is an almost sure constant (Lemma~\ref{l.stationaryharmonic}).
\end{itemize}

\subsection{Related work}

The divisible sandpile was introduced in \cite{LP09,LP10} to study the scaling limits of two growth models,
rotor aggregation and internal DLA. The divisible sandpile has also been used as a device for proving an exact mean value property for discrete harmonic functions \cite[Lemma 2.2]{JLS13}.
These works focused on sandpiles with finite total mass on an infinite graph, in which case exploding is not a possibility.
In the present paper we expand the focus to sandpiles with infinite total mass.

The abelian sandpile has a much longer history: it arose in statistical physics as a model of `self-organized criticality' (SOC) \cite{BTW87,Dha90}. The dichotomy between stabilizing and exploding configurations arose in the course of a debate about whether SOC does or does not involve tuning a parameter to a critical value \cite{FR,MQ}.
Without reopening that particular debate, we view the stabilizing/exploding dichotomy as a topic with its own intrinsic mathematical interest.   An example of its importance can be seen in the partial differential equation for the scaling limit of the abelian sandpile on $\Z^2$, which relies on a classification of certain `quadratic' sandpiles according to whether they are stabilizing or exploding \cite{LPS12}.

The Gaussian vector $\eta$ in Proposition~\ref{t.finite} can be interpreted as a discrete bi-Laplacian field. In $\Z^d$ for dimensions $d\geq 5$, Sun and Wu construct another discrete model for the bi-Laplacian field by assigning random signs to each component of the uniform spanning forest \cite{SW13}.

\section{Toppling procedures and stabilization} \label{s.maindefs}

In this section $G=(V,E)$ is a locally finite, connected, undirected graph. Denote by $\mathcal{X}= \mathbb{R}^{V}$ the set of divisible sandpile configurations on $G$.

\begin{definition} \label{dtoppling}
Let $T \subset  [0,\infty)$ be a well-ordered set of \emph{toppling times} such that $0 \in T$ and $T$ is a closed subset of $[0,\infty)$.
A \emph{toppling procedure} is a function
	\begin{align*} T \times V &\to [0,\infty) \\
				(t,x) &\mapsto u_t(x) \end{align*}
satisfying for all $x \in V$
	\begin{enumerate}
	\item $u_0(x)=0$.
	\item $u_{t_1}(x) \leq u_{t_2}(x)$ for all $t_1 \leq t_2$.
	\item If $t_n \uparrow t$, then $u_{t_n}(x) \uparrow u_t(x)$.
	\end{enumerate}
\end{definition}

In the more general toppling procedures considered by Fey, Meester and Redig \cite{FMR}, the assumption that $T$ is well-ordered becomes a ``no infinite backward chain'' condition, but we will not need that level of generality.  See Examples \ref{e.parallel}-\ref{e.stages} below for the three specific toppling procedures we will use.

The interpretation of a toppling procedure is that starting from an initial configuration $s \in \mathcal{X}$, the total mass emitted by a site $x \in V$ to each of its neighbors during the time interval $[0,t]$ is $u_t(x)$, so that the resulting configuration at time $t$ is
	\[ s_t = s + \Delta u_t \]
where $\Delta$ is the graph Laplacian \eqref{e.thelaplacian}.

For $a \in \R$ write $a^+ = \max(a,0)$.  For $t \in T$ write $t^- := \sup \{r \in T \,:\, r<t \}$. Note that $t^- \in T$ since $T$ is closed.

\begin{definition}
A toppling proceedure $u$ is called \emph{legal} for initial configuration $s$ if
	\[ u_t(x) - u_{t^-}(x) \leq \frac{(s_{t^-}(x)-1)^+}{ \operatorname{deg}(x)}\]
for all $x \in V$ and all $t \in T \setminus \{ 0 \}$.
\end{definition}

Thus, in a legal toppling procedure, a site with mass $\leq 1$ cannot emit any mass, while a site with mass $>1$ must keep at least mass $1$ for itself.

\begin{definition}
A toppling procedure $u$ is called \emph{finite} if for all $x \in V$ we have
\begin{equation*}
u_\infty(x) := \lim_{t \to \sup T} u_t(x)
< \infty
\end{equation*}
and \emph{infinite} otherwise.
The limit exists in $[0,\infty]$ since $u_t(x)$ is nondecreasing in $t$.
\end{definition}

Note that if $u$ is a finite toppling procedure, then the limit
	\[ s_\infty := \lim_{t \to \sup T} s_t = s + \lim_{t \to  \sup T} \Delta u_t \]
exists and equals $s + \Delta u_\infty$.

\begin{definition} \label{defstab}
Let $s \in \mathcal{X}$.
A toppling procedure $u$ is called \emph{stabilizing} for $s$ if $u$ is finite and $s_\infty \leq 1$ pointwise.  We say that $s$ \emph{stabilizes} if there exists a stabilizing toppling procedure for $s$.
\end{definition}

Throughout this paper, all inequalities between functions hold pointwise, and we will usually omit the word ``pointwise.''

A basic question arises: For which $s \in \mathcal{X}$ does there exist a stabilizing toppling procedure? For instance, one might expect (correctly) that there is no such procedure for $s \equiv 2$. We can rephrase this question in terms of the set of functions
	\[ \mathcal{F}_s := \{ f: V \to \R \,|\, f \geq 0 \text{ and } s + \Delta f \leq 1 \}. \]
If $u$ is a stabilizing toppling procedure for $s$, then $u_\infty \in \mathcal{F}_s$. Conversely, any $f \in \mathcal{F}_s$ arises from a stabilizing toppling procedure for $s$ simply by setting $T = \{0,1\}$ and $u_1 = f$.  Therefore $s$ stabilizes if and only if $\mathcal{F}_s$ is nonempty.


\begin{prop}\label{lap} \moniker{Least action principle and abelian property}
Let $s \in \mathcal{X}$, and let $\ell$ be a legal toppling procedure for $s$.
\begin{enumerate}[\em (i)]
\item For all $f \in \mathcal{F}_s$,
	\[ \ell_\infty \leq f. \]

\item If $u$ is any stabilizing toppling procedure for $s$, then \[ \ell_\infty \leq u_\infty. \]

\item If $u$ is any legal stabilizing toppling procedure for $s$, then for all $x \in V$,
	\begin{equation} \label{e.LAP} u_\infty(x) = \inf \{f(x) \,|\, f \in \mathcal{F}_s \} \end{equation}
In particular, $u_\infty$ and the final configuration \[ s_\infty = s + \Delta u_\infty \] do not depend on the choice of legal stabilizing toppling procedure $u$.
\end{enumerate}
\end{prop}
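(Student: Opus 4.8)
The plan is to prove the three parts in order, since (ii) and (iii) follow quickly once (i) is established, and the real work is in (i). The heart of (i) is a monotonicity statement: if $\ell$ is legal for $s$ and $f \in \mathcal{F}_s$, then $\ell_t \le f$ for every toppling time $t \in T$. I would prove this by transfinite induction on $t$ along the well-ordered set $T$. The base case $t=0$ is immediate from $\ell_0 \equiv 0 \le f$. For the successor-type step, suppose $\ell_{t^-} \le f$; I want $\ell_t \le f$. Fix $x$ and consider two cases. If $s_{t^-}(x) \le 1$, then legality forces $\ell_t(x) - \ell_{t^-}(x) \le 0$, hence $\ell_t(x) = \ell_{t^-}(x) \le f(x)$. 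If $s_{t^-}(x) > 1$, the key is to compare $s_{t^-}(x)$ with $(s + \Delta f)(x) \le 1$: writing $s_{t^-} = s + \Delta \ell_{t^-}$, the inductive hypothesis $\ell_{t^-} \le f$ together with the fact that $\ell_{t^-}(x) = f(x)$ would be needed to control the sign of $\Delta(\ell_{t^-} - f)(x)$... which is exactly the subtle point. The correct argument: since $\ell_{t^-} \le f$ everywhere and we are looking at vertex $x$, we do \emph{not} know $\ell_{t^-}(x) = f(x)$, so instead one argues by contradiction at a vertex where $\ell_t - f$ is maximized, or one uses that the toppling at a single time step only moves mass locally. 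Concretely: suppose for contradiction $\ell_t(x_0) > f(x_0)$ for some $x_0$; among all such $x_0$ pick one (this requires a maximum principle / careful bookkeeping since $V$ may be infinite — so one should instead do the induction more carefully, perhaps first for the parallel procedure and finitely supported $f$, then pass to the limit). At $x_0$: $\ell_t(x_0) > f(x_0) \ge \ell_{t^-}(x_0)$ (wait, not necessarily). The cleaner route, and the one I would actually pursue, is the standard sandpile argument: show by induction that after each legal toppling step the emitted amounts are dominated by $f$, using that $f \in \mathcal{F}_s$ means $f$ is a ``valid'' stabilizing assignment and legal topplings never ``overshoot.''

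The clean inductive step is this. Assume $\ell_{t^-} \le f$. Let $x$ be arbitrary. If $\ell_t(x) = \ell_{t^-}(x)$ we are done, so assume $\ell_t(x) > \ell_{t^-}(x)$, which by legality forces $s_{t^-}(x) > 1$. Now
\[
s_{t^-}(x) = s(x) + \Delta \ell_{t^-}(x) = s(x) + \sum_{y\sim x}\bigl(\ell_{t^-}(y) - \ell_{t^-}(x)\bigr).
\]
Hmm — to bound this above by $s(x) + \Delta f(x) \le 1$ I need $\ell_{t^-}(y) - \ell_{t^-}(x) \le f(y) - f(x)$, i.e.\ I need $\ell_{t^-}(x) \ge f(x)$, not $\le$. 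So the single-step induction on $t$ alone is insufficient; one genuinely needs the classical trick of inducting on the number of topplings *at $x$* or using the abelian property. The approach I would take: prove the lemma first for the parallel toppling procedure (Example~\ref{e.parallel}), where at integer times all unstable sites topple once; there, induct on $n$ and at vertex $x$ split by whether $x$ has ever become stable. Actually the robust standard proof: order the elementary topplings of $\ell$ (each legal toppling time, at each vertex, decomposed into unit topplings) into a sequence, and show by induction on the length of this sequence that the cumulative odometer is $\le f$; when a toppling fires at $x$, the site $x$ had mass $>1$ \emph{before} firing, while $f$ already satisfies $s + \Delta f \le 1$, so $x$ has not yet emitted its full $f(x)$ quota — this is where one compares, at $x$, the partial odometer (which by induction is $\le f(x)$) with $f(x)$ and shows strict inequality $<f(x)$ before the firing, hence $\le f(x)$ after a unit firing (after scaling, $\le f(x)$ after the full allowed emission). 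I expect \textbf{this step — showing a legal toppling at $x$ keeps the partial odometer $\le f(x)$ — to be the main obstacle}, requiring the observation that if the partial odometer at $x$ equalled $f(x)$ then $s_{\mathrm{partial}}(x) \le s(x) + \Delta f(x) \le 1$ would contradict $x$ being unstable. This uses monotonicity of $\Delta$ in the off-diagonal terms plus the inductive bound at the \emph{neighbors}.

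Granting (i), the rest is short. For (ii): if $u$ is stabilizing then $u_\infty \in \mathcal{F}_s$ (since $u$ finite gives $s_\infty = s + \Delta u_\infty$, and $s_\infty \le 1$, and $u_\infty \ge 0$), so (i) gives $\ell_\infty \le u_\infty$. For (iii): if $u$ is legal \emph{and} stabilizing, apply (i) with the roles of $\ell$ and $u$ swapped — since $u$ is legal, $u_\infty \le f$ for all $f \in \mathcal{F}_s$; but $\ell_\infty \in \mathcal{F}_s$ by exactly the same reasoning (need $\ell$ finite: this follows because $\ell_\infty \le u_\infty < \infty$ by part (i), wait we need some stabilizing procedure to exist — if none exists the statement of (iii) is vacuous, or rather we observe $\mathcal{F}_s \ne \emptyset$ is implied by the hypothesis that a legal stabilizing procedure exists), so $\ell_\infty \le u_\infty$ and $u_\infty \le \ell_\infty$ give equality, and then $u_\infty = \ell_\infty = \inf\{f(x) : f \in \mathcal{F}_s\}$ follows since $\ell_\infty \le f$ for all such $f$ and $\ell_\infty$ itself lies in the closure (in fact in $\mathcal{F}_s$). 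The infimum is attained, so it equals the pointwise minimum. Finally $s_\infty = s + \Delta u_\infty$ is then independent of the choice of legal stabilizing procedure. The one place to be careful throughout is the passage from finitely-supported $f$ (or from finite truncations of the graph) to general $f \in \mathcal{F}_s$ on an infinite graph, which I would handle by a monotone limiting argument using property (3) of toppling procedures (continuity along increasing sequences of times) and the fact that $\ell_\infty = \sup_t \ell_t$.
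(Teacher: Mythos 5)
Your parts (ii) and (iii) are fine, but your treatment of (i) — which is the entire content of the proposition — has a genuine gap at the crux. In the divisible model a single legal toppling at $x$ can emit \emph{any} amount up to $(s_{t^-}(x)-1)^+/\deg(x)$, so the observation you rest the argument on (``if the partial odometer at $x$ equalled $f(x)$ then $x$ would be stable, so it cannot fire'') does not suffice: it rules out firing \emph{at} the value $f(x)$ but does not prevent one emission from jumping from below $f(x)$ to above $f(x)$. The step you wave at with ``after scaling, $\le f(x)$ after the full allowed emission'' is exactly the missing estimate, and it is quantitative: with $w = f - \ell_{t^-}\ge 0$ one has
\[
s_{t^-}(x) = (s+\Delta f)(x) - \Delta w(x) \le 1 + \deg(x)\,w(x) - \sum_{y\sim x} w(y) \le 1 + \deg(x)\,w(x),
\]
so $(s_{t^-}(x)-1)^+ \le \deg(x)\,w(x)$ and legality gives $\ell_t(x) \le \ell_{t^-}(x) + w(x) = f(x)$. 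Note this also shows that the direct induction on $t$ that you abandoned actually works: your ``Hmm'' step failed only because you were trying to prove $s_{t^-}(x)\le 1$, which is not what is needed; the target is the bound on the \emph{allowed emission} by $w(x)$, and for that the inductive hypothesis $\ell_{t^-}\le f$ at the neighbors (giving $w\ge 0$) is exactly the right input. Limit times in $T$ are handled by property (3) of Definition~\ref{dtoppling}, so no serialization into ``unit topplings'' is needed — that detour creates its own unresolved problems (infinitely many sites may topple simultaneously, so the elementary topplings cannot be listed in a sequence amenable to ordinary induction, and serializing is precisely the kind of rearrangement the abelian property is supposed to justify, not assume).

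For comparison, the paper's proof is the ``first violation'' form of this same computation: set $\tau_y = \inf\{t\in T: \ell_t(y)>f(y)\}$, use well-ordering of $T$ and property (3) to get a global first violation time $\tau=\tau_y$ with $\tau^-<\tau$, and then the two-sided comparison of $s_{\tau^-}(y)$ (legality from below, $s+\Delta f\le 1$ plus $w\ge 0$ from above) forces $\sum_{x\sim y} w(x)<0$, a contradiction. So your instincts about where the difficulty lies were right, but as written the proposal neither completes the induction nor supplies the inequality that makes the key step true.
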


\begin{proof}
(i) For $y \in V$ let $\tau_y = \inf \{ t \in T \,:\, \ell_t(y)>f(y) \}$, and suppose for a contradiction that $\tau_y < \infty$ for some $y \in V$.
Since $T$ is well-ordered, the infimum is attained.  Moreover, $\tau_y^- := \sup \{t \in T \,:\, t < \tau_y\}<\tau_y$ by assumption (3) of Definition~\ref{dtoppling}.

Let $\tau = \inf_{y \in V} \tau_y$. Since $T$ is well-ordered, $\tau = \tau_y$ for some $y \in V$. Now at time $\tau^-$, since $u$ is legal for $s$,
	\[ s_{\tau^-}(y) \geq 1 + \operatorname{deg}(y)\left(\ell_\tau(y) - \ell_{\tau^-}(y)\right) > 1 +\operatorname{deg}(y) w(y) \]
where $w = f - \ell_{\tau^-}$. On the other hand,
	\begin{align*} s_{\tau^-}(y) &= (s + \Delta \ell_{\tau^-})(y) \\
						&= (s + \Delta f)(y) - \Delta w (y) \\
						&\leq 1 + \deg(y)w(y) -  \sum_{x \sim y} w(x).
						\end{align*}
It follows that $\sum_{x \sim y} w(x)  < 0$. But for all $x \in V$ we have $\tau^- < \tau \leq \tau_x$, so $w(x) \geq 0$, which yields the required contradiction.

Part (ii) follows from (i), using $f = u_\infty$.

Part (iii) also follows from (i): the function $u_\infty$ simultaneously attains the infimum for all $x\in V$.
\end{proof}

Whenever we need to fix a particular toppling procedure, we will choose one of the following three.

\begin{example} \label{e.parallel}
\dmoniker{Toppling in parallel}
Let $T=\N$. At each time $t\in \N$, all unstable sites of $s_{t-1}$ topple all of their excess mass simultaneously: For all $x \in V$,
	\[ u_t(x) - u_{t-1}(x) = \frac{(s_{t-1}(x)-1)^+}{\operatorname{deg}(x)}. \]
This $u$ is a legal toppling procedure. Two further observations about $u$ will be useful in the proof of Lemma~\ref{c.lap} below. First, if $s$ stabilizes, then $u$ is finite by Proposition~\ref{lap}(ii).
Second, whenever $u$ is finite, $u$ is stabilizing for $s$: indeed, if $u_\infty(x) = \frac{1}{\deg(x)} \sum_{t \in \N} (s_t(x)-1)^+ < \infty$, we have $(s_t(x)-1)^+ \to 0$ as $t \to \infty$ and hence $s_\infty(x) \le 1$.
\end{example}

\begin{example} \label{e.nest}
\dmoniker{Toppling in nested volumes}
Let $V_1 \subset V_2 \subset \ldots$ be finite sets with $\bigcup_{n \geq 1} V_n = V$. Between times $n-1$ and $n$ we topple in parallel to stabilize all sites in $V_n$: Formally, we take $T$ to be the set of all rationals of the form $n-\frac{1}{k}$ for positive integers $n$ and $k$.
For $n \geq 1$ and $k \geq 1$ we set
	\[ u_{n-\frac{1}{k}}(x) - u_{n-\frac{1}{k-1}}(x) = \frac{(s_{n-\frac{1}{k-1}}(x)-1)^+ \cdot \one_{x \in V_n}}{\operatorname{deg}(x)} \]
and
	\[ u_{n}(x) = \lim_{k \to \infty} u_{n-\frac{1}{k}}(x). \]
\end{example}

\begin{example} \label{e.stages}
\dmoniker{Toppling in two stages}
In this procedure we are given a decomposition of the initial configuration into two pieces
	\[ s = s^1 + s^2 \]
where $s^1$ stabilizes and $s^2 \geq 0$. In the first stage we ignore the extra mass $s^2$ and stabilize the $s^1$ piece by toppling in parallel at times $1-\frac1k$ for positive integers $k$, obtaining
	\[ s_1 = s + \Delta u^1_\infty = s^1_\infty + s^2. \]
The condition that $s^1$ stabilizes ensures $u^1_\infty < \infty$, and the condition $s^2 \geq 0$ ensures that all topplings that are legal for $s^1$ are also legal for $s$.  Now we topple $s^1_\infty +s^2$ in parallel at times $2-\frac1k$ for positive integers $k$.
\end{example}

Now we come to a central definition of this paper. Let $s \in \mathcal{X}$.

\begin{definition}
The \emph{odometer} of $s$ is the function $u_\infty : V \to [0,\infty]$ of \eqref{e.LAP}.  If $s$ stabilizes, then its \emph{stabilization} is the configuration
	\[ s_\infty = s + \Delta u_\infty. \]
\end{definition}

If $s$ stabilizes, then its odometer $u_\infty(x)$ is the total amount of mass sent from $x$ to one of its neighbors, in any legal stabilizing toppling procedure for $s$.
If $s$ does not stabilize, then $u_\infty \equiv \infty$:
The odometer is defined as a pointwise infimum, with the usual convention that the infimum of the empty set is $\infty$. Next we observe that the odometer can also be expressed as a pointwise supremum.

\begin{lemma}\label{c.lap}
Let $u_\infty$ be the odometer of $s \in \mathcal{X}$.  Then for all $x \in V$,
  \begin{equation} \label{e.legalsup} u_\infty(x) = \sup \{ \ell_\infty(x) \,|\, \ell \mbox{ is a legal toppling procedure for $s$} \}. \end{equation}
\end{lemma}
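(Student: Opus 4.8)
The plan is to prove the two inequalities in \eqref{e.legalsup} separately: the bound $\sup\{\ell_\infty(x)\}\le u_\infty(x)$ comes straight from Proposition~\ref{lap}(i), while the reverse bound is obtained by exhibiting one legal toppling procedure that attains (or overshoots) $u_\infty(x)$, namely the parallel procedure of Example~\ref{e.parallel}.

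For the upper bound, let $\ell$ be an arbitrary legal toppling procedure for $s$. If $\mathcal{F}_s=\emptyset$ then $u_\infty(x)=\inf\emptyset=\infty$ and there is nothing to prove, so assume $\mathcal{F}_s\neq\emptyset$. Proposition~\ref{lap}(i) gives $\ell_\infty\le f$ for every $f\in\mathcal{F}_s$, hence $\ell_\infty(x)\le\inf\{f(x):f\in\mathcal{F}_s\}=u_\infty(x)$ for all $x$. Taking the supremum over all legal $\ell$ yields $\sup\{\ell_\infty(x)\}\le u_\infty(x)$.

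For the lower bound it suffices to produce a single legal toppling procedure $v$ with $v_\infty(x)\ge u_\infty(x)$ for all $x$; I take $v$ to be the parallel toppling procedure of Example~\ref{e.parallel}, which is legal. If $s$ stabilizes, then $v$ is finite by Proposition~\ref{lap}(ii), and a finite parallel procedure is stabilizing (second observation in Example~\ref{e.parallel}); being legal \emph{and} stabilizing, Proposition~\ref{lap}(iii) applies and gives $v_\infty(x)=\inf\{f(x):f\in\mathcal{F}_s\}=u_\infty(x)$. If $s$ does not stabilize, then $u_\infty\equiv\infty$ and I must check $v_\infty\equiv\infty$: were $v$ finite it would be stabilizing (Example~\ref{e.parallel}), a contradiction, so $v_\infty(y)=\infty$ for some $y$, and the remaining point is to propagate this to all vertices. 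For $x\sim y$, if $v_\infty(x)<\infty$ then $\sum_t(s_t(x)-1)^+=\deg(x)\,v_\infty(x)<\infty$, so $(s_t(x)-1)^+\to0$ and $\limsup_t s_t(x)\le1$; on the other hand $s_t(x)=s(x)+\sum_{z\sim x}v_t(z)-\deg(x)v_t(x)\ge s(x)+v_t(y)-\deg(x)v_t(x)\to\infty$, since $v_t(y)\to\infty$ while $v_t(x)\to v_\infty(x)<\infty$ --- a contradiction. Hence $v_\infty(x)=\infty$, and by connectedness of $G$ we get $v_\infty\equiv\infty=u_\infty$. Combining the two inequalities gives \eqref{e.legalsup}.

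The only step that is not an immediate quotation of Proposition~\ref{lap} and of the two observations recorded in Example~\ref{e.parallel} is the propagation argument in the non-stabilizing case --- showing that an infinite parallel odometer is infinite at \emph{every} vertex, not just at one --- so that is where I expect the (mild) work to lie.
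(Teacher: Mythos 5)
Your proposal is correct and follows essentially the same route as the paper: the upper bound from Proposition~\ref{lap}(i), and the lower bound by analyzing the parallel procedure of Example~\ref{e.parallel}, splitting into the stabilizing case (where Proposition~\ref{lap}(iii) gives equality) and the non-stabilizing case (where infiniteness of the parallel odometer is propagated to all vertices by connectedness). Your propagation step is a mild variant of the paper's—you derive a contradiction from $(s_t(x)-1)^+\to 0$ versus $s_t(x)\to\infty$, whereas the paper writes the one-line inequality $\ell_{t+1}(x)\gtrsim \ell_t(o)+s(x)-1$—but the argument is the same in substance.
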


\begin{proof}
Denote the right side of \eqref{e.legalsup} by $L(x)$. By Proposition \ref{lap}(i), $L \le u_\infty$. To prove the reverse inequality we will use a particular legal $\ell$, the parallel toppling procedure of Example~\ref{e.parallel}. There are two cases: First, if this $\ell$ is finite, then $\ell$ is stabilizing as well as legal, so $L \geq \ell_\infty = u_\infty$ by Proposition \ref{lap}(iii).

Second, if $\ell$ is not finite, then $\ell_\infty(o)=\infty$ for some $o \in V$. Then for any neighbor $x \sim o$, we have
 \[
  \ell_{t+1}(x) \ge \ell_{t}(o) + s(x)-1 \]
and the right side tends to $\infty$ with $t$, so $\ell_\infty(x)=\infty$.
Since the graph $G$ is connected it follows that $\ell_\infty \equiv \infty$.  In this case, both $L$ and $u_\infty$ are identically $\infty$.
\end{proof}

We pause to record several equivalent conditions for $s$ stabilizing.

\begin{corollary}\label{c.equiv}
Let $s \in \mathcal{X}$ have odometer $u_\infty$. The following are equivalent.
\begin{enumerate}
\item There exists a legal stabilizing toppling procedure for $s$.
\item There exists a stabilizing toppling procedure for $s$.
\item $\mathcal{F}_s \neq \emptyset$.
\item $u_\infty(x) < \infty$ for all $x \in V$.
\item Every legal toppling procedure for $s$ is finite.
\item The parallel toppling procedure for $s$ is finite.
\end{enumerate}
\end{corollary}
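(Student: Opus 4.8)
The plan is to prove Corollary~\ref{c.equiv} by assembling implications we have essentially already collected, organizing them into a cycle (plus one side implication) so that all six conditions are equivalent. First I would establish the chain $(1) \Rightarrow (2) \Rightarrow (3)$: the first implication is immediate since a legal stabilizing toppling procedure is in particular a stabilizing toppling procedure, and the second is the observation made just before Proposition~\ref{lap}, namely that if $u$ stabilizes $s$ then $u_\infty \in \mathcal{F}_s$, so $\mathcal{F}_s \neq \emptyset$. Next, $(3) \Rightarrow (4)$: given any $f \in \mathcal{F}_s$, Proposition~\ref{lap}(i) applied to the parallel toppling procedure $\ell$ (which is legal by Example~\ref{e.parallel}) gives $\ell_\infty \leq f < \infty$ pointwise; but by Lemma~\ref{c.lap}, $u_\infty(x) = \sup_\ell \ell_\infty(x)$ over legal $\ell$, and more directly, since $\mathcal{F}_s \ne \emptyset$ the infimum defining the odometer in \eqref{e.LAP} is over a nonempty set and is bounded above by $f(x)$, hence $u_\infty(x) \le f(x) < \infty$.

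Then I would close the loop with $(4) \Rightarrow (5) \Rightarrow (6) \Rightarrow (1)$. For $(4) \Rightarrow (5)$: if $\ell$ is any legal toppling procedure for $s$, then by Proposition~\ref{lap}(i)--style reasoning, or rather directly by Lemma~\ref{c.lap}, $\ell_\infty(x) \le u_\infty(x) < \infty$ for all $x$, so $\ell$ is finite. The implication $(5) \Rightarrow (6)$ is trivial since the parallel toppling procedure of Example~\ref{e.parallel} is one particular legal toppling procedure. Finally $(6) \Rightarrow (1)$: this is exactly the second observation recorded in Example~\ref{e.parallel} --- if the parallel toppling procedure $u$ is finite, then $(s_t(x)-1)^+ \to 0$ for each $x$, hence $s_\infty(x) \le 1$, so $u$ is a stabilizing toppling procedure; and it is legal by construction, giving a legal stabilizing toppling procedure for $s$.

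Since none of the individual steps requires new ideas, there is no serious obstacle here; the only thing to be careful about is circularity in the bookkeeping --- I want to make sure each implication genuinely uses only results proved before Corollary~\ref{c.equiv} (Proposition~\ref{lap}, Example~\ref{e.parallel}, Lemma~\ref{c.lap}) and not the corollary itself or any later result. In particular I should double-check that the use of Lemma~\ref{c.lap} in $(4) \Rightarrow (5)$ is legitimate: Lemma~\ref{c.lap} asserts $u_\infty(x) = \sup\{\ell_\infty(x)\}$ over legal $\ell$, with the convention that $u_\infty \equiv \infty$ when $s$ does not stabilize, and its proof only invokes Proposition~\ref{lap}, so the dependency is clean. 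The one genuinely substantive input --- that finiteness of the parallel procedure forces $s_\infty \le 1$ --- has already been argued in Example~\ref{e.parallel}, so the proof of the corollary is essentially a one-paragraph citation of prior observations arranged in a cycle.
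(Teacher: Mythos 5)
Your proof is correct and follows essentially the same route as the paper: the chain $(1)\Rightarrow(2)\Rightarrow(3)\Rightarrow(4)$ from the definitions (with $(3)\Rightarrow(4)$ coming directly from the infimum in \eqref{e.LAP}), $(4)\Rightarrow(5)$ via Lemma~\ref{c.lap}, $(5)\Rightarrow(6)$ since the parallel procedure is legal, and $(6)\Rightarrow(1)$ via the observation in Example~\ref{e.parallel} that a finite parallel procedure is both legal and stabilizing. Your attention to avoiding circularity is well placed but there is no issue, since Lemma~\ref{c.lap} depends only on Proposition~\ref{lap} and Example~\ref{e.parallel}.
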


\begin{proof}
The implications $(1) \Rightarrow (2) \Rightarrow (3) \Rightarrow (4)$ and $(5) \Rightarrow (6)$ are immediate from the definitions.  The implication $(4) \Rightarrow (5)$ follows from Lemma~\ref{c.lap}.  Finally, let $u$ be the parallel toppling procedure for $s$. If $u$ is finite, then $u$ is both legal and stabilizing for $s$, which shows the remaining implication $(6) \Rightarrow (1)$.
\end{proof}


Denote by $\PP_o$ the law of the discrete time simple random walk  $(X_j)_{j \in \N}$ on $G$ started at $X_0 = o$.  Writing $p_j(x) = \PP_o(X_j = x)/ \deg(x)$, observe that the Laplacian \eqref{e.thelaplacian} of $p_j$ satisfies
 	\begin{align} \Delta p_j(x) &= \sum_{y \sim x} \left(\frac{\PP_o(X_j=y)}{\deg(y)} - \frac{\PP_o(X_j=x)}{\deg(x)} \right) \nonumber \\
	&=  \sum_{y \sim x} \PP_o(X_j=y, X_{j+1}=x) - \PP_o(X_j=x) \nonumber \\
	&= \deg(x) (p_{j+1}(x) - p_j(x)). \label{e.ptlaplacian}
	\end{align}	
\old{
 	\begin{align} \frac{\Delta p_t(x)}{\deg(x)} &= \frac{1}{\deg(x)} \sum_{y \sim x} (p_t(y) - p_t(x)) \nonumber \\
	&= \sum_{y \sim x} \PP_o(X_t=y, X_{t+1}=x) \nonumber \\
	&= p_{t+1}(x) - p_t(x). \label{e.ptlaplacian}
	\end{align}	
}
\old{
	 \begin{align*} \Delta p_t(x)
	&=  \deg(x) \left( -p_t(x) + \sum_{y \sim x} \PP_o(X_t=y, X_{t+1}=x) \right)  \\
	&= \deg(x) (p_{t+1}(x) - p_t(x)).
	\end{align*}
}
The next lemma will play an important role in the proof of the recurrent case of Theorem~\ref{main}. It relates the stabilizability of a particular configuration $s$ to the transience of the simple random walk.

\begin{lemma} \label{l.diracplusone}
Fix $o \in V$ and $\beta>0$. The divisible sandpile
	\[ s(x) = \begin{cases} 1, & x \neq o \\ 1+\beta, & x=o \end{cases} \]
stabilizes on $G$ if and only if the simple random walk on $G$ is transient.
\end{lemma}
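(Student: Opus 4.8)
The plan is to identify the odometer of $s = 1 + \beta\delta_o$ explicitly in terms of the Green's function and use the least action principle (Proposition~\ref{lap}) to decide stabilizability. First I would observe that by \eqref{e.allones}-type reasoning, if $s$ stabilizes then its odometer $u_\infty$ must satisfy $\Delta u_\infty = s_\infty - s$. Rather than guessing $s_\infty$, the cleaner route is to construct, in the transient case, an explicit member of $\mathcal{F}_s$, and in the recurrent case, to show $\mathcal{F}_s = \emptyset$.

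For the transient direction, let $g(x,y)$ denote Green's function for simple random walk (expected number of visits to $y$ started from $x$), which is finite precisely when the walk is transient. Set
\[
  f(x) = \frac{\beta}{\deg(o)}\, g(x,o).
\]
Using the standard identity $\Delta g(\cdot,o)(x) = -\deg(o)\,\delta_o(x)$ (which follows from \eqref{e.ptlaplacian} after summing $p_j$ over $j$, since $g(x,o)/\deg(o) = \sum_j p_j(x)$ when $X_0 = o$, and telescoping the sum $\sum_j \Delta p_j$), one computes $\Delta f = -\beta\delta_o$, so $s + \Delta f \equiv 1 \le 1$. Since $g \ge 0$, we have $f \ge 0$, hence $f \in \mathcal{F}_s$ and $s$ stabilizes by the criterion ``$s$ stabilizes iff $\mathcal{F}_s \ne \emptyset$'' recorded just before Proposition~\ref{lap} (equivalently Corollary~\ref{c.equiv}). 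In fact $u_\infty = f$ by \eqref{e.LAP}, and the stabilization is the all-$1$ configuration.

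For the recurrent direction, I would run the parallel toppling procedure (Example~\ref{e.parallel}) and show it is infinite; by Corollary~\ref{c.equiv} this shows $s$ does not stabilize. The odometer after $n$ steps of parallel toppling dominates (in fact, by a direct comparison, equals up to the relevant normalization) the truncated Green's sum: one shows by induction that $u_n(x) \ge \frac{\beta}{\deg(o)}\sum_{j=0}^{n-1} \PP_o(X_j = x)$ — more precisely, that the mass redistributed mimics the random walk occupation measure started with mass $\beta$ at $o$, since toppling a divisible pile is exactly averaging. Letting $n \to \infty$, the right side converges to $\frac{\beta}{\deg(o)} g(x,o) = \infty$ by recurrence, so $u_\infty(o) = \infty$ and $s$ explodes. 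Alternatively, and perhaps more slickly: if $s$ stabilized with odometer $u_\infty$, then $h := u_\infty$ would be a nonnegative function with $\Delta h = s_\infty - s \le 1 - s = -\beta\delta_o \le 0$, i.e. $h$ is a nonnegative superharmonic function that is not harmonic at $o$; on a recurrent graph every nonnegative superharmonic function is constant, forcing $\Delta h \equiv 0$, a contradiction.

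The main obstacle is making the comparison between the divisible sandpile dynamics and the random walk occupation measure fully rigorous in the recurrent case — i.e. justifying that the parallel odometer $u_n$ really does track $\sum_{j<n}\PP_o(X_j=\cdot)$ (suitably normalized). This is a standard ``abelian property'' argument but requires care with the $\deg$ factors in the Laplacian \eqref{e.thelaplacian} and with the fact that topplings at already-stable sites contribute nothing; the inductive step uses \eqref{e.ptlaplacian}. The superharmonic-function alternative sidesteps this but requires invoking the classification of nonnegative superharmonic functions on recurrent graphs, which the paper may prefer to prove self-containedly via the explicit toppling comparison.
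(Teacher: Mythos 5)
Your proposal is correct and takes essentially the same route as the paper: the paper computes the parallel toppling procedure exactly, showing by induction that $u_t(x)=\beta\sum_{j=0}^{t-1}\PP_o(X_j=x)/\deg(x)$, hence $u_\infty=\beta g(o,\cdot)/\deg(\cdot)$, which is finite precisely when $G$ is transient (Corollary~\ref{c.equiv} then handles the recurrent case), and your transient direction merely repackages the same Green's function formula as an explicit element of $\mathcal{F}_s$ via the least action principle. One small correction: the exact normalization in the occupation-measure identity is $\deg(x)$, not $\deg(o)$ (they coincide only after using reversibility to replace $\PP_o(X_j=x)$ by $\PP_x(X_j=o)$), but this constant factor does not affect the finiteness/divergence dichotomy, so your argument stands.
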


\begin{proof}
 We compute the parallel toppling procedure $u_t$ and the configuration $s_t$ at time $t$ of Example \ref{e.parallel}.
Setting $p_t(x) = \PP_o(X_t=x)/\deg(x)$, let us show by induction on $t$ that
  	\begin{equation} \label{e.odomt} u_t(x)= \beta \sum_{j=0}^{t-1} p_j(x) \end{equation}
 and
 	\begin{equation} \label{e.sandt} s_t(x)= 1 + \beta \deg(x) p_t(x) \end{equation}
for all $x \in V$ and all $t \in \N$. Indeed, if \eqref{e.odomt} holds at time $t$, then summing \eqref{e.ptlaplacian} we obtain
	\[ s_{t} = s + \Delta u_{t} = s + \beta \deg(x) (p_t - p_0) = 1 + \beta \deg(x) p_t \]
so \eqref{e.sandt} holds at time $t$, whence
	\[ u_{t+1}(x) - u_t(x) = \frac{(s_t(x)-1)^+}{\deg(x)} = \beta p_t(x)  \]
so \eqref{e.odomt} holds at time $t+1$, completing the inductive step.

 Taking $t \uparrow \infty$ in \eqref{e.odomt} yields $u_\infty(x)= \beta g(o,x)/\deg(x)$ where
  	\begin{equation} \label{e.gdef} g(o,x)= \sum_{j=0}^{\infty} \PP_{o}(X_j=x) \end{equation}
is the Green function of $G$, which is finite if and only if $G$ is transient.

 If $G$ is transient, then the parallel toppling procedure $u$ is finite and $s + \Delta u_\infty \equiv 1$, so $s$ stabilizes. If $G$ is recurrent, then the parallel toppling procedure is infinite, so $s$ does not stabilize by Corollary~\ref{c.equiv}.
\end{proof}

\section{Conservation of density}
\label{s.conservation}

In this section we assume that $G =(V,E)$ is vertex-transitive, and we fix a subgroup $\Gamma$ of $\aut(G)$ that acts transitively: for any $x,y \in V$ there is an automorphism $\alpha \in \Gamma$ such that $\alpha x=y$.
For the rest of the paper, we assume $o \in V$ be an arbitrary fixed vertex.
Write $\mathcal{X}= \R^V$ (viewed as a measurable space with the Borel $\sigma$-field)
and $T_\alpha: \mathcal{X} \to \mathcal{X}$ for the shift $(T_\alpha f)(x)= f(\alpha^{-1} x)$.
Let $\PP$ be a probability measure on $\mathcal{X}$
satisfying $\EE |s(o)| <\infty$.  We assume that $\PP$ is \textbf{$\Gamma$-invariant}; that is, if $s$ has distribution $\PP$ then $T_\alpha s$ has distribution $\PP$ for all  $\alpha \in \Gamma$.


\begin{prop}
\moniker{Conservation of Density}
\label{p.cden}
If $\PP$ is $\Gamma$-invariant and $\PP \{ s \text{ stabilizes} \} = 1$, then the stabilization $s_\infty$ satisfies \[ \EE s_\infty(o) = \EE s (o). \]
\end{prop}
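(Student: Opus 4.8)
The plan is to prove conservation of density first at each finite time for the parallel toppling procedure $u_t$ of Example~\ref{e.parallel}, and then let $t\to\infty$. Recall that this $u_t$ is legal, and that on the almost sure event $\{s\text{ stabilizes}\}$ it is finite and stabilizing, with $u_t\uparrow u_\infty$ and $s_t\to s_\infty$ pointwise (Example~\ref{e.parallel}, Proposition~\ref{lap}). Since $G$ is vertex-transitive it is $d$-regular for some $d$, so the parallel rule reads $s_t(x)=\min(s_{t-1}(x),1)+\tfrac1d\sum_{y\sim x}(s_{t-1}(y)-1)^+$. Writing $h_t(x):=(s_t(x)-1)^+$ and letting $P$ denote the simple-random-walk operator $Pf(x)=\tfrac1d\sum_{y\sim x}f(y)$, this gives $h_t\le Ph_{t-1}$, hence by iteration the sub-mean-value bound
\[ (s_t(o)-1)^+ = h_t(o) \le (P^t h_0)(o) = \EE_o\big[(s(X_t)-1)^+\big], \]
which is the key estimate and will be used twice below.

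For finite-time conservation: from $u_{t+1}(o)-u_t(o)=h_t(o)/d$, the bound above together with the stationarity of $\PP$ gives $\EE\, u_t(o)\le\tfrac td\,\EE[(s(o)-1)^+]<\infty$ for every $t$. Because the parallel procedure is built purely from the adjacency relation, the degrees, and the configuration values — all $\Gamma$-equivariant — the map $s\mapsto u_t$ commutes with every $T_\alpha$, so the law of $u_t$ is $\Gamma$-invariant and $\EE\, u_t(x)=\EE\, u_t(o)$ for all $x$. Since $\EE\, u_t(o)<\infty$ I may expand $\EE\,\Delta u_t(o)=\sum_{y\sim o}\EE\, u_t(y)-d\,\EE\, u_t(o)=0$, so that $\EE\, s_t(o)=\EE\, s(o)$ for every $t$.

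To pass to the limit I will show $\EE\, s_t(o)\to\EE\, s_\infty(o)$. On $\{s\text{ stabilizes}\}$ we have $u_t\uparrow u_\infty<\infty$, so $\Delta u_t(o)\to\Delta u_\infty(o)$ (a finite sum of monotone limits) and $s_t(o)\to s_\infty(o)$ a.s., with $s_\infty(o)\le 1$. Decompose $s_t(o)=\min(s_t(o),1)+(s_t(o)-1)^+$. An easy induction gives $s_t(o)\ge\min(s(o),1)$ (a site retains at least the smaller of its current mass and $1$, and only gains mass from neighbours), so $\lvert\min(s_t(o),1)\rvert\le\lvert s(o)\rvert+1\in L^1$ while $\min(s_t(o),1)\to s_\infty(o)$; dominated convergence then gives $\EE\min(s_t(o),1)\to\EE\, s_\infty(o)$. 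For the other term, $(s_t(o)-1)^+\to 0$ a.s., so if $\{(s_t(o)-1)^+\}_t$ is uniformly integrable then $\EE(s_t(o)-1)^+\to 0$; combining the two yields $\EE\, s_t(o)\to\EE\, s_\infty(o)$, and since $\EE\, s_t(o)=\EE\, s(o)$ for all $t$ the proposition follows.

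The main obstacle is this uniform integrability: a priori, mass could pile up at $o$ at intermediate times before dispersing. Here the sub-mean-value bound reduces matters to uniform integrability of the probability-weighted averages $Z_t:=\EE_o[(s(X_t)-1)^+]=\sum_y\PP_o(X_t=y)(s(y)-1)^+$. I will apply Jensen's inequality with the convex function $r\mapsto(r-K)^+$ inside the random-walk average, getting $(Z_t-K)^+\le\EE_o\big[((s(X_t)-1)^+-K)^+\big]$; taking expectations and using that $(s(y)-1)^+$ is identically distributed in $y$ (vertex-transitivity and $\Gamma$-invariance) yields $\EE[(Z_t-K)^+]\le\EE[((s(o)-1)^+-K)^+]=:\varepsilon(K)$, a bound uniform in $t$ with $\varepsilon(K)\to 0$ as $K\to\infty$. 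A Markov-inequality step then gives $\EE[Z_t\mathbf 1_{Z_t>2K}]\le\EE[(Z_t-K)^+]+K\,\PP(Z_t>2K)\le 2\varepsilon(K)$ uniformly in $t$, which is exactly the required uniform integrability. Everything else is routine bookkeeping with monotone and dominated convergence.
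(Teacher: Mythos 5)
Your proposal is correct and follows essentially the same route as the paper: topple in parallel, use $\Gamma$-invariance of the law of $u_t$ to get $\EE s_t(o)=\EE s(o)$ at each finite time, dominate $(s_t(o)-1)^+$ by the random-walk average $\sum_y \PP_o(X_t=y)(s(y)-1)^+$, and conclude via uniform integrability together with the lower bound $s_t \ge \min(s,1)$. The only (cosmetic) difference is that you establish the uniform integrability of these averages directly by a Jensen-plus-Markov truncation argument, whereas the paper isolates this as Lemma~\ref{l.unifint} about convex combinations of identically distributed integrable random variables.
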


Fey, Meester and Redig \cite{FMR} used an ergodic theory argument to prove the conservation of density for the abelian sandpile on $\Z^d$ when $\Gamma$ is the group of translations.  They considered only nonnegative initial conditions: $s \geq 0$. We will give an elementary proof which starts in the same way, by choosing an automorphism-invariant toppling procedure (toppling in parallel) and uses the following observation about averages of uniformly integrable random variables.

\begin{lemma}
\label{l.unifint}
If $X$ is a random variable with $\EE |X|<\infty$, and $X_1, X_2, \ldots$ is any sequence of random variables such that $X_i \eqindist X$ for all $i$, then the family
	\[ \mathcal{S} = \{ a_1 X_1 + \ldots + a_k X_k \,|\, k \geq 1, a_i \geq 0, \sum a_i = 1 \} \]
is uniformly integrable.
\end{lemma}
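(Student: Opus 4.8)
The plan is to reduce uniform integrability to a single tail bound that is uniform over the convex family $\mathcal{S}$. Recall that a family $\{Z_\lambda\}$ is uniformly integrable if and only if $\sup_\lambda \EE[\abs{Z_\lambda} \one_{\abs{Z_\lambda} > M}] \to 0$ as $M \to \infty$. First I would fix $\eps > 0$ and, using $\EE\abs{X} < \infty$, choose $K$ large enough that $\EE[\abs{X}\one_{\abs{X} > K}] < \eps$. The point is that since each $X_i \eqindist X$, the truncation estimate $\EE[\abs{X_i}\one_{\abs{X_i}>K}] < \eps$ holds simultaneously for every $i$, with the same $K$.

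Next, for a typical element $Y = \sum_{i=1}^k a_i X_i \in \mathcal{S}$ with $a_i \ge 0$ and $\sum a_i = 1$, I would split each $X_i$ as $X_i = X_i' + X_i''$ where $X_i' = X_i \one_{\abs{X_i} \le K}$ and $X_i'' = X_i \one_{\abs{X_i} > K}$, and correspondingly write $Y = Y' + Y''$ with $Y' = \sum a_i X_i'$, $Y'' = \sum a_i X_i''$. The bounded part satisfies $\abs{Y'} \le K$ deterministically (convex combination of things bounded by $K$), and the bad part satisfies $\EE\abs{Y''} \le \sum a_i \EE\abs{X_i''} < \eps$ by the choice of $K$ and $\sum a_i = 1$. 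Now for $M > K$, on the event $\abs{Y} > M$ we have $\abs{Y''} \ge \abs{Y} - \abs{Y'} > M - K > 0$, and also $\abs{Y} \le \abs{Y'} + \abs{Y''} \le K + \abs{Y''}$; hence $\EE[\abs{Y}\one_{\abs{Y}>M}] \le \EE[(K + \abs{Y''})\one_{\abs{Y''} > M-K}] \le K\,\PP(\abs{Y''} > M-K) + \EE\abs{Y''} \le \frac{K}{M-K}\EE\abs{Y''} + \eps \le \frac{K\eps}{M-K} + \eps$, using Markov's inequality. This bound depends only on $K, M, \eps$ and not on $k$ or the $a_i$, so it is uniform over $\mathcal{S}$; letting $M \to \infty$ and then $\eps \to 0$ gives the claim.

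I do not expect a serious obstacle here; the lemma is essentially the standard fact that uniform integrability of a family is inherited by its closed convex hull, specialized to the case where the generating family $\{X_i\}$ is trivially uniformly integrable because all its members share the single integrable law of $X$. The only mild subtlety to be careful about is that the splitting $X_i = X_i' + X_i''$ must use the truncation at level $K$ (not at the running level $M$), so that the deterministic bound $\abs{Y'} \le K$ is available while the $L^1$ smallness of $Y''$ is simultaneously controlled; conflating the two levels is the natural trap. Everything else — the application of Markov's inequality and the bookkeeping with $\sum a_i = 1$ — is routine.
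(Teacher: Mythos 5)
Your argument is correct: the two-level truncation ($K$ fixed by $\eps$, $M\to\infty$), the deterministic bound $\abs{Y'}\le K$ from convexity, the $L^1$ smallness of $Y''$ from $X_i\eqindist X$ and $\sum a_i=1$, and the Markov step are all sound, and the final bound $\frac{K\eps}{M-K}+\eps$ is indeed uniform over $\mathcal{S}$, so letting $M\to\infty$ and then $\eps\to 0$ verifies the tail-integral definition of uniform integrability directly. The paper takes a slightly different (and shorter) route: it reduces to $X\ge 0$ and verifies the absolute-continuity characterization of uniform integrability, namely that for every $\eps>0$ there is $\delta>0$ such that $\PP(A)<\delta$ forces $\EE[Y\one_A]<\eps$ for all $Y\in\mathcal{S}$ (with $\sup_Y\EE\abs{Y}\le\EE\abs{X}$ implicit). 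There convexity enters trivially, via linearity: $\EE[Y\one_A]=\sum a_i\,\EE[X_i\one_A]$, and each term is controlled by the single truncation bound for the common law of $X$ applied to the arbitrary small event $A$; no comparison between the event $\{\abs{Y}>M\}$ and the individual $X_i$ is ever needed. Your approach pays for working with the definition itself by needing the decomposition $Y=Y'+Y''$ and Markov's inequality, but it avoids invoking the $\eps$--$\delta$ criterion and handles signed $X$ without the positivity reduction. Both proofs rest on the same two facts — all $X_i$ share one integrable law, and the weights form a convex combination — so the difference is one of packaging rather than substance; either is acceptable.
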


\begin{proof}
We may assume $X \geq 0$. 
Given $\eps>0$, we must show that there is a $\delta>0$ so that for any set $A$ with $\PP(A)<\delta$ we have $\EE(Y \one_A)<\eps$ for all $Y \in \mathcal{S}$.

Since $\EE X < \infty$, there is such $\delta$ for X itself: choose $M$ so that $\EE(X \one_{\{X>M\}}) <\eps$ and then set $\delta=\PP(X>M)$.
Now for any set $A$ with $\PP(A)<\delta$ and any $Y = \sum a_i X_i \in \mathcal{S}$
	\[ \EE [Y \one_A] = \sum_{i=1}^k a_i \EE [X_i \one_A] < \eps \]
so the same $\delta$ works for all $Y \in \mathcal{S}$.
\end{proof}

\begin{proof}[Proof of Proposition~\ref{p.cden}]
We topple in parallel: at each time step $t=0,1,\ldots$, each site $x \in V$ distributes all of its excess mass $\sigma_t(x) = (s_t(x)-1)^+$ equally among its $r$ neighbors where $r$ is the common degree of all vertices in $G$.
The resulting configuration after $t$ time steps is	\[ s_t = s_0 + \Delta u_t. \]
where $u_t = r^{-1}(\sigma_0 + \ldots + \sigma_{t-1})$.  Since $\PP \{ s \text{ stabilizes} \}= 1$ we have $s_t(o) \to s_\infty(o)$, a.s..
We will show that the random variables $\sigma_t(o)$ for $t \in \N$ are uniformly integrable. To finish the proof from there, note that the law of $u_t$ is $\Gamma$-invariant, so $\EE u_t(x) = \EE u_t(y)$ for all $x,y \in V$. In particular, $\EE \Delta u_t(o)=0$ and hence $\EE s_t(o) = \EE s_0(o)$ for all $t<\infty$.
Since $s_t \geq \min(s_0,1)$ the uniform integrability of $\sigma_t(o)$ implies that of $s_t(o)$, so we conclude $\EE s_\infty(o)=\EE s_0(o)$.

At time step $t$ the origin retains mass $\leq 1$ and receives mass $\sigma_{t-1}(y)/r$ from each neighbor $y$, so
	\[ s_{t}(o) \leq 1 + \sum_{x \sim o} \frac{\sigma_{t-1}(x)}{r} \]
hence
	\[ \sigma_{t}(o) \leq \frac{1}{r} \sum_{x \sim o} \sigma_{t-1}(x). \]
Inducting on $t$ we find that
	\[ \sigma_t(o) \leq Y_t := \sum_{x \in V} a_t(x) \sigma_0(x) \]
where $a_t(x)$ is the probability that a $t$-step simple random walk started at  $o \in V$ ends at $x$.  By Lemma~\ref{l.unifint} the random variables $Y_t$ are uniformly integrable, which completes the proof.
\end{proof}

We remark that the above proof also applies to the abelian sandpile to show \cite[Lemma~2.10]{FMR}, by taking $\sigma_t(x) = \floor{s_t(x)^+/r}$, the number of times $x$ topples at time $t$.

\section{Behavior at critical density}

In this section $G=(V,E)$ is an infinite vertex-transitive graph, $o \in V$ is a fixed vertex, $\Gamma \subset \aut(G)$ is a group of automorphisms that acts transitively $V$, and $\PP$ is a $\Gamma$-invariant and ergodic probability measure on $\mathcal{X} =\R^V$ with $\EE |s(o)|<\infty$.
The event that $s$ stabilizes is $\Gamma$-invariant, so it has probability $0$ or $1$ by ergodicity.

\begin{lem} \label{l.notstab}
If $\EE s(o) >1$, then $\PP \{ s \text{ stabilizes} \} =0$.
\end{lem}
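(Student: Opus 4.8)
The statement to prove is Lemma~\ref{l.notstab}: if $\EE s(o) > 1$, then $s$ almost surely does not stabilize. The natural strategy is to argue by contradiction using the conservation of density (Proposition~\ref{p.cden}). Suppose $\PP\{s \text{ stabilizes}\} > 0$. By ergodicity this event is $\Gamma$-invariant, hence has probability $0$ or $1$, so $\PP\{s \text{ stabilizes}\} = 1$. Now Proposition~\ref{p.cden} applies: the stabilization $s_\infty$ satisfies $\EE s_\infty(o) = \EE s(o)$. But a stabilized configuration satisfies $s_\infty \leq 1$ pointwise by Definition~\ref{defstab}, so in particular $\EE s_\infty(o) \leq 1$. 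This contradicts $\EE s(o) > 1$, completing the proof.

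First I would note that Proposition~\ref{p.cden} is stated under the hypotheses of its own section (Section~\ref{s.conservation}), namely that $G$ is vertex-transitive, $\PP$ is $\Gamma$-invariant, and $\EE|s(o)| < \infty$ — all of which are in force in the present section, so the proposition applies without extra work. The only genuine logical move is the reduction from ``positive probability'' to ``probability one,'' which is handled by the ergodicity assumption together with the observation (already recorded in the text preceding the lemma) that stabilization is a $\Gamma$-invariant event. So the proof is essentially a three-line deduction chaining together ergodicity, conservation of density, and the trivial bound $s_\infty \leq 1$.

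There is no real obstacle here; the lemma is a short corollary. If I wanted to be careful about one point, it would be confirming that the event $\{s \text{ stabilizes}\}$ is measurable and $\Gamma$-invariant — measurability follows because the odometer $u_\infty$ is a pointwise supremum of a countable family (one can restrict to the parallel procedure, by Corollary~\ref{c.equiv}(6), which is a deterministic function of $s$), and $\Gamma$-invariance follows because the toppling rule commutes with graph automorphisms. But this is routine and the excerpt has already asserted it. The one-sentence summary of the proof: ergodicity upgrades positive probability of stabilization to probability one, then conservation of density forces $\EE s_\infty(o) = \EE s(o) > 1$, contradicting $s_\infty \leq 1$.

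\begin{proof}
Suppose for contradiction that $\PP\{s \text{ stabilizes}\} > 0$. As noted above, the event that $s$ stabilizes is $\Gamma$-invariant, so by ergodicity of $\PP$ it has probability $1$. Then Proposition~\ref{p.cden} applies and gives $\EE s_\infty(o) = \EE s(o)$. On the other hand, if $s$ stabilizes then $s_\infty \leq 1$ pointwise by Definition~\ref{defstab}, so $\EE s_\infty(o) \leq 1$. Hence $\EE s(o) \leq 1$, contradicting the hypothesis $\EE s(o) > 1$. Therefore $\PP\{s \text{ stabilizes}\} = 0$.
\end{proof}
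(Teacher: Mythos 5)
Your proof is correct and follows essentially the same route as the paper: the zero-one law for the $\Gamma$-invariant event of stabilization (already noted in the text), then conservation of density (Proposition~\ref{p.cden}) combined with the pointwise bound $s_\infty \leq 1$. The paper phrases it as a contrapositive rather than a contradiction, but the content is identical.
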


\begin{proof}
If $s$ stabilizes, then by conservation of density (Proposition~\ref{p.cden}),  we have $\EE s_\infty(o) =\EE s(o)$.
 Since the final configuration $s_\infty$ is stable we have $s_\infty(o) \leq 1$, so $\EE s(o) \leq 1$.
 \end{proof}

\begin{lem} \label{l.stable}
If $\EE s(o) <1$ then $\PP \{ s \text{ stabilizes} \} =1$.
\end{lem}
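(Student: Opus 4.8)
The plan is to produce, with probability one, a nonnegative function $f \in \mathcal{F}_s$, i.e. an $f \geq 0$ with $s + \Delta f \leq 1$; by Corollary~\ref{c.equiv} this is equivalent to stabilization. The natural candidate is the ``full-plane'' solution of $\Delta f = (1-s)$, but since $\EE(1-s(o)) = 1 - \EE s(o) > 0$ this cannot be solved globally with a nonnegative odometer on an infinite transitive graph; instead I would exploit that the excess density $\rho := 1 - \EE s(o) > 0$ gives us room to spare. Concretely, I would stabilize $s$ in nested finite volumes $V_n \uparrow V$ (toppling in nested volumes, Example~\ref{e.nest}), obtaining odometers $u^{(n)}$, and show that $u^{(n)}(o)$ stays bounded as $n \to \infty$. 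If the sequence $u^{(n)}(o)$ is bounded (along a subsequence, hence, by monotonicity in $n$, in the limit), then $u^{(n)} \uparrow u_\infty$ with $u_\infty(o) < \infty$, and since the same argument applies at every vertex and $\PP$ is $\Gamma$-invariant, $u_\infty < \infty$ everywhere a.s., so $s$ stabilizes.

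To bound $u^{(n)}(o)$ I would use a comparison/mass-transport argument. The key identity is that when we stabilize inside $V_n$, the total mass that leaves $V_n$ through its boundary equals the total excess mass $\sum_{x \in V_n}(s(x) - 1)$ that was present, minus whatever deficit remained; more usefully, summing $\Delta u^{(n)}$ against a suitable test function controls $u^{(n)}$. The cleanest route: let $T_n$ be the first exit time of simple random walk from $V_n$, and note that $u^{(n)}(x) = \deg(x)^{-1}\sum_{y} g^{V_n}(x,y)(s(y)-1)^+_{\text{effective}}$ — more precisely, from the least-action principle, $u^{(n)}(o) \le \deg(o)^{-1} \sum_{y \in V_n} g^{V_n}(o,y)\,(s(y)-1)$ whenever the right side is nonnegative, where $g^{V_n}$ is the Green function killed on exiting $V_n$. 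Taking expectations, $\EE[\text{r.h.s.}] = \deg(o)^{-1}\sum_y g^{V_n}(o,y)\,\EE(s(y)-1) = -\rho\,\deg(o)^{-1}\sum_y g^{V_n}(o,y) \le 0$, and more importantly the fluctuation term $\sum_y g^{V_n}(o,y)(s(y) - \EE s(y))$ has variance $\Var s(o) \cdot \sum_y g^{V_n}(o,y)^2$, which in the recurrent case grows but is dominated by the square of the mean $\bigl(\rho \sum_y g^{V_n}(o,y)\bigr)^2$ since $g^{V_n}(o,y) \le g^{V_n}(o,o) = O(\text{diam})$ forces $\sum g^2 \ll (\sum g)^2$. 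Hence by Chebyshev the negative drift $-\rho\sum_y g^{V_n}(o,y)$ dominates the fluctuations with probability $\to 1$, forcing the finite-volume odometer to vanish on most of $V_n$ and in particular $u^{(n)}(o)$ to be tight. (One does not even need $\EE|s(o)| < \infty$ alone; the proof only needs the strictly negative mean, and a truncation/Borel–Cantelli argument upgrades ``tight'' to ``a.s. finite.'')

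The main obstacle is making the finite-volume comparison rigorous when the right-hand side $\sum_y g^{V_n}(o,y)(s(y)-1)$ is \emph{not} automatically nonnegative, so that it is not literally an element of $\mathcal{F}_s$ restricted to $V_n$. The fix is standard but requires care: one stabilizes $s \wedge 1$-plus-excess only where needed, or equivalently observes that the actual finite-volume odometer $u^{(n)}$ is the least nonnegative function with $s + \Delta u^{(n)} \le 1$ on $V_n$ (Dirichlet boundary conditions), so $u^{(n)} \le (\text{any nonnegative supersolution})$; one then builds an explicit nonnegative supersolution from the Green-function expression truncated at its negative part and controls the correction. An alternative, perhaps cleaner, is an ergodic-theoretic argument: topple in parallel as in Proposition~\ref{p.cden}, and show the parallel odometer $u_\infty(o)$ has finite expectation by a direct computation using $\EE \Delta u_t(o) = 0$ together with the fact that the total emitted excess is summable in expectation because the density is subcritical — i.e. $\sum_t \EE \sigma_t(o) \le \sum_t (\EE s_t(o) - 1)^+$-type bound, which telescopes since $\EE s_t(o) = \EE s(o) < 1$ forces the excess to be exponentially small. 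I would pursue this second approach first, as it reuses the machinery of Section~\ref{s.conservation} almost verbatim and sidesteps the supersolution bookkeeping entirely.
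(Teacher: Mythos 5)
Both of your routes stall at the decisive step. In the nested-volume route, the comparison $u^{(n)}(o)\le \deg(o)^{-1}\sum_{y\in V_n} g^{V_n}(o,y)\,(s(y)-1)$ is not available: the least action principle (Proposition~\ref{lap}) only lets you dominate $u^{(n)}$ by a function that is nonnegative \emph{everywhere} and satisfies $s+\Delta f\le 1$ throughout $V_n$, and your Green-function candidate has negative mean $-\rho\,\deg(o)^{-1}\sum_y g^{V_n}(o,y)$, so it is typically negative on a large part of $V_n$ and cannot serve as such an $f$. The correct identity (it appears in Section~\ref{s.strans} of the paper) is $u^{(n)}(o)= r^{-1}\sum_x g_n(x,o)(s(x)-1)+r^{-1}\sum_x g_n(x,o)(1-\xi_n(x))$ with the second term \emph{nonnegative}; that is, the Green sum is a lower bound for the finite-volume odometer, not an upper bound, and in the subcritical regime the deficit term $\sum_x g_n(x,o)(1-\xi_n(x))$ is of the same order as the negative drift you are counting on, so the drift-versus-fluctuation computation does not close. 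Your proposed fix (``truncate at the negative part and control the correction'') is exactly the unresolved difficulty, and the Chebyshev step also quietly assumes $\Var s<\infty$, which the lemma does not. The alternative route fails at the claimed bound $\sum_t\EE\sigma_t(o)\lesssim\sum_t(\EE s_t(o)-1)^+$: Jensen gives $(\EE s_t(o)-1)^+\le\EE(s_t(o)-1)^+=\EE\sigma_t(o)$, i.e.\ the opposite inequality, and since $\EE s_t(o)=\EE s(o)$ for all $t$ there is nothing to telescope and no reason for exponential smallness.

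The paper's proof avoids bounding the odometer altogether and argues the contrapositive. Topple in parallel (Example~\ref{e.parallel}); if $s$ does not stabilize, then as in the proof of Lemma~\ref{c.lap} one has $u_t(o)>0$ for all large $t$, hence $s_t(o)\ge 1$ for all large $t$, so $\liminf_{t\to\infty}s_t(o)\ge 1$ almost surely. If $s$ is bounded below by some $M\le 0$, then $s_t\ge\min(s,1)\ge M$, and Fatou's lemma together with $\EE s_t(o)=\EE s(o)$ (which follows from $\Gamma$-invariance of the parallel procedure, $\EE\Delta u_t(o)=0$) forces $\EE s(o)\ge 1$. The general case reduces to this one: choose $M$ so negative that $\EE\bigl[s(o)\one_{s(o)\ge M}\bigr]<1$; then $s\one_{s\ge M}$ is bounded below and stabilizes a.s.\ by the previous case, and since $s\le s\one_{s\ge M}$ pointwise, $s$ stabilizes as well. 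If you want to keep your strategy, this monotonicity-plus-Fatou contrapositive is the ingredient to substitute for the quantitative odometer bound, which, as far as I can see, your sketch does not actually deliver.
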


\begin{proof}
We will show the contrapositive. We will use the observation made in the introduction that if $s_t(o) \geq 1$ for some time $t$ then $s_T(o) \geq 1$ for all $T \geq t$.

Consider first the case that $s$ is bounded below: $\PP(s(o) \ge M)=1$ for some $M \in (-\infty,0]$.
Define $u_t$ and $s_t$ by toppling in parallel as in \textsection\ref{s.conservation}.
Supposing that $\PP \{s \text{ stabilizes}\} =0$, we have $u_t(o)>0$ for some sufficiently large $t$, a.s.; this fact is  contained in the proof of Lemma~\ref{c.lap}.
Since $u_t(o)>0$ implies $s_t(o) \geq 1$ and hence $s_T(o) \geq 1$ for all $T \geq t$, we have
	\[ \PP \left\{ \liminf_{t \to \infty} s_t(o) \ge 1 \right \} = 1. \]
Since $s_t \ge \min(s,1) \ge M$, by Fatou's lemma $\EE \left(\liminf_{t\to \infty} s_t(o) \right) \le \EE s(o)$, which shows $\EE s(o) \geq 1$ as desired.

Now we use a truncation argument to reduce the general case to case where $s$ is bounded below. Choose sufficiently small $M \in (-\infty,0]$ such that
$\EE s(o) \one_{s(o) \ge M} < 1$. By the previous case the configuration $s \one_{s \ge M}$ stabilizes almost surely. Since $s \le s \one_{s \ge M}$, we have that
$s$ stabilizes almost surely.
\end{proof}

If $s$ stabilizes then the \textbf{odometer} of $s$ is the function $u_\infty : V \to [0,\infty)$, where $u$ is any legal stabilizing toppling procedure for $s$.

\begin{lem}
\label{l.allones}
If $s$ has $\Gamma$-invariant law $\PP$ and $\PP \{s \text{ stabilizes} \}=1$, then the odometer $u_\infty$ has $\Gamma$-invariant law. Moreover, if $\EE s(o) = 1$ then $s_\infty \equiv 1$ and $\Delta u_\infty = 1 - s$.
\end{lem}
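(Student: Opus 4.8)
The plan is to establish the two claims in sequence: first that the odometer map $s\mapsto u_\infty$ is equivariant under $\Gamma$ (which immediately gives the $\Gamma$-invariance of its law), and then to combine conservation of density with an elementary boundedness argument to pin down $s_\infty$.

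For the equivariance, the key point is that every $\alpha\in\Gamma$ is a graph automorphism, so the Laplacian \eqref{e.thelaplacian} commutes with the shift: $\Delta T_\alpha = T_\alpha \Delta$, and the constant function $1$ is fixed by $T_\alpha$. Hence $f\ge 0$ and $s+\Delta f\le 1$ if and only if $T_\alpha f\ge 0$ and $T_\alpha s + \Delta T_\alpha f \le 1$, i.e.\ $\mathcal{F}_{T_\alpha s} = T_\alpha \mathcal{F}_s$. Taking the pointwise infimum in the characterization \eqref{e.LAP} of the odometer then yields $u_\infty(T_\alpha s) = T_\alpha u_\infty(s)$ on the full-measure event $\{s\text{ stabilizes}\}$. (Here $u_\infty$ is a genuine random element of $\mathcal{X}$: it is the monotone limit of the parallel toppling procedure of Example~\ref{e.parallel}, each step of which is a measurable function of $s$.) Since $\PP$ is $\Gamma$-invariant, $T_\alpha s \eqindist s$, so $T_\alpha u_\infty(s) = u_\infty(T_\alpha s) \eqindist u_\infty(s)$, which is the first assertion. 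Because $u_\infty$ is a fixed equivariant measurable function of $s$, the same reasoning applied to the pair $(s,u_\infty)$ shows that the joint law of $(s,u_\infty)$ — and therefore the law of $s_\infty = s+\Delta u_\infty$ — is $\Gamma$-invariant as well.

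Now assume $\EE s(o)=1$. Conservation of density (Proposition~\ref{p.cden}) gives $\EE s_\infty(o) = \EE s(o) = 1$; this expectation is well-defined because $\min(s,1)\le s_\infty\le 1$ and $\EE|s(o)|<\infty$. By the $\Gamma$-invariance of the law of $s_\infty$ established above, $\EE s_\infty(x)=1$ for every $x\in V$. But $s_\infty$ is stable, so $s_\infty(x)\le 1$ pointwise; a random variable bounded above by $1$ with mean $1$ equals $1$ almost surely, so $\PP\{s_\infty(x)=1\}=1$ for each $x$. Since $V$ is countable, intersecting these events gives $\PP\{s_\infty\equiv 1\}=1$, and then $\Delta u_\infty = s_\infty - s = 1-s$.

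The only steps requiring any care are the equivariance of the odometer — which rests entirely on its canonical description in Proposition~\ref{lap}(iii) together with $\Delta T_\alpha = T_\alpha\Delta$ — and the passage from "$s_\infty(x)=1$ almost surely for each fixed $x$" to "$s_\infty\equiv 1$ almost surely" via countability of $V$. Neither is a genuine obstacle; the substance of the lemma is really supplied by Proposition~\ref{p.cden}.
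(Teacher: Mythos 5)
Your argument is correct and follows essentially the same route as the paper: equivariance of the odometer via the least action characterization \eqref{e.LAP} together with $\Delta T_\alpha = T_\alpha \Delta$, and then Proposition~\ref{p.cden} combined with $s_\infty \le 1$ to force $s_\infty \equiv 1$. The extra details you supply (measurability of $u_\infty$ via parallel toppling, vertex-by-vertex argument plus countability of $V$) are points the paper leaves implicit, not a different method.
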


\begin{proof}
We use \eqref{e.LAP} along with the fact that $\Delta$ commutes with $T_\alpha$: if $s + \Delta f \leq 1$ then $T_\alpha s + \Delta (T_\alpha f) = T_\alpha (s + \Delta f) \leq 1$, so if $u_\infty$ is the odometer for $s$ then $T_\alpha u_\infty$ is the odometer for $T_\alpha s$.

By conservation of density (Proposition~\ref{p.cden}), $\EE s_\infty(o)=1$. Since $s_\infty \leq 1$ it follows that $s_\infty \equiv 1$, and hence $\Delta u_\infty = 1-s$.
\end{proof}

In the case that $G$ is recurrent we can prove our main theorem with no moment assumption, using an ``extra head'' construction.

\begin{lem} \label{l.recurrent}
Suppose that $G$ is an infinite, recurrent, vertex-transitive graph and $\{ s(x)\}_{x \in V}$ are i.i.d.\ with $\mathbb{E} s(x) = 1$  and $\mathbb{P} \{s(x)=1 \} \neq 1$. Then \[ \PP \{ s \mbox{ stabilizes} \} =0.\]
\end{lem}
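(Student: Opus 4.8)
The plan is to argue by contradiction using a resampling trick built around Lemma~\ref{l.diracplusone}. Suppose $\PP\{s\text{ stabilizes}\}=1$. Since $G$ is recurrent, vertex-transitive, and the $s(x)$ are i.i.d.\ with $\EE s(o)=1$, Lemma~\ref{l.allones} applies (the i.i.d.\ law is $\Gamma$-invariant for $\Gamma$ any transitive subgroup of $\aut(G)$), so almost surely $s_\infty\equiv 1$ and the odometer $u_\infty:V\to[0,\infty)$ satisfies $\Delta u_\infty=1-s$ pointwise. In particular $u_\infty$ is a finite nonnegative function solving this Poisson-type equation.

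Now I would exploit the conditional independence of $s(o)$ from the rest of the configuration. Condition on $(s(x))_{x\ne o}$. Because $\PP\{s(o)=1\}\ne 1$, with positive probability the value $s(o)$ lies in some range; more usefully, consider two independent copies $s$ and $s'$ that agree off $o$ and have possibly different values at $o$, say $s'(o)=s(o)+\beta$ for some $\beta\ne 0$ occurring with positive probability in the coupling (using that $\EE s(o)=1$ and $s(o)$ is nondegenerate, one can pick two atoms or two points of the support whose difference $\beta$ is nonzero; if $\beta<0$ swap the roles). On the event that both $s$ and $s'$ stabilize — which has full probability — we get finite nonnegative odometers $u_\infty$ and $u'_\infty$ with $\Delta u_\infty=1-s$ and $\Delta u'_\infty=1-s'$. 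Subtracting, $w:=u'_\infty-u_\infty$ satisfies $\Delta w = s-s' = -\beta\,\delta_o$ (up to the $\deg$ normalization in \eqref{e.thelaplacian}, i.e.\ $\Delta w = -\beta\,\one_{x=o}$), and $w$ is a difference of two finite functions, hence finite everywhere.

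The contradiction then comes from recurrence: on a recurrent graph the only finite solutions to $\Delta w = -\beta\,\one_{x=o}$ would force, via Lemma~\ref{l.diracplusone}, the configuration $1+\beta\delta_o$ (or $1-\beta\delta_o$, choosing the sign so the perturbation is positive at $o$) to stabilize, which it does not. Concretely: if $w$ is finite with $\Delta w=-\beta\one_{x=o}$, then $v:=w/\beta$ (adjusting sign) would be, up to an additive constant and the degree factor, exactly the odometer computed in the proof of Lemma~\ref{l.diracplusone}, namely $\beta g(o,\cdot)/\deg(o)$; but $g(o,o)=\infty$ on a recurrent graph, contradicting finiteness of $w$. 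The cleanest packaging is: $u_\infty$ stabilizes $s$, and $u_\infty + (\text{legal topplings of the extra mass }\beta\delta_o)$ would stabilize $s'=s+\beta\delta_o$; but by the abelian property (Proposition~\ref{lap}) and Lemma~\ref{l.diracplusone}, adding $\beta\delta_o$ to an already-stable-at-density-$1$ configuration on a recurrent graph cannot stabilize, since it reduces to stabilizing $1+\beta\delta_o$. I expect the main obstacle to be making precise the "subtract two coupled odometers" step — specifically, ensuring the coupling of $s$ and $s'$ is set up so that with positive probability both stabilize (immediate, full probability) and the difference at $o$ is a fixed nonzero $\beta$ of the right sign, and then knowing that a finite solution of $\Delta w=-\beta\one_{x=o}$ genuinely cannot exist on a recurrent graph. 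That last point is essentially the content of Lemma~\ref{l.diracplusone}: any finite $w$ with $\Delta w = -\beta\one_{x=o}$ would, after subtracting its value at $o$ and using the maximum principle / the explicit Green's function representation, have to equal $-\beta g(o,\cdot)/\deg(o)$ up to a constant, which is infinite. One should double-check that no nonconstant finite harmonic perturbation can be added — but on a recurrent (hence Liouville) vertex-transitive graph bounded harmonic functions are constant, and a slightly more careful argument handles merely finite ones via the resampling giving the needed contradiction directly from non-stabilization of $1+\beta\delta_o$.
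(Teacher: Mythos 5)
Your overall strategy -- resample $s(o)$ to get a coupled configuration $s'=s+\beta\delta_o$ with $\beta>0$ on an event of positive probability, use Lemma~\ref{l.allones} to know both stabilize to the all $1$ configuration, and derive a contradiction with Lemma~\ref{l.diracplusone} via recurrence -- is exactly the paper's, and your ``cleanest packaging'' sentence (stabilize the $s$-part first, so that stabilizing $s'$ reduces to stabilizing $1+\beta\delta_o$, which fails on a recurrent graph) is precisely the paper's two-stage toppling argument (Example~\ref{e.stages} plus the abelian property). Had you committed to that route, the proof would be complete up to routine bookkeeping.

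However, the ``concrete'' justification you actually spell out is wrong. You claim that a finite $w$ with $\Delta w=-\beta\delta_o$ cannot exist on a recurrent graph because any such $w$ would have to equal $-\beta g(o,\cdot)/\deg(o)$ up to an additive constant. There is no such uniqueness: solutions differ by arbitrary harmonic functions, which need not be bounded, so the Liouville property does not rescue the argument; and indeed finite solutions do exist on recurrent graphs -- on $\Z^2$ the (negated, rescaled) potential kernel $a$ satisfies $\Delta(-\tfrac{\beta}{4}a)=-\beta\delta_o$ and is finite everywhere. What fails on a recurrent graph is the existence of a \emph{nonnegative} such $w$ (equivalently $\mathcal{F}_{1+\beta\delta_o}\neq\emptyset$), and your $w=u'_\infty-u_\infty$ is not obviously nonnegative, since it is a difference of two nonnegative functions. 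The subtraction route can be repaired: by the least action principle (Proposition~\ref{lap}), $s'\geq s$ implies $\mathcal{F}_{s'}\subseteq\mathcal{F}_s$ and hence $u'_\infty\geq u_\infty$, so $w\geq 0$ and then $1+\beta\delta_o+\Delta w\leq 1$ shows $w\in\mathcal{F}_{1+\beta\delta_o}$, contradicting Lemma~\ref{l.diracplusone}. Either this monotonicity observation or the two-stage toppling must be made explicit; as written, the step ``finite $\Rightarrow$ contradiction with recurrence'' is a genuine gap.
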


\begin{proof}
Let $S \subset \mathcal{X}$ denote the set of configurations that stabilize.
 By ergodicity, $\mathbb{P}(S) \in \{0,1\}$.
We will show that if $\PP(S)=1$, then the graph $G$ must be transient. Let $s:V \to \R$ denote an i.i.d.\  configuration with the given distribution such that $\EE s=1$, and fix a vertex $o \in V$.
 We create a new i.i.d.\ configuration $s'$ with the same law as $s$, by independently resampling $s(o)$ from the same distribution.
Then $s'= s + \beta \delta_o$, where $\delta_o(x) = \one \{x = o\}$ and $\beta$ is a mean zero random variable. Since $\Var s >0$, we have that $\PP(\beta >0) >0$. Using $\PP(S ) =1$ along with Lemma~\ref{l.allones}, there exist
 $s:V \to \R$ and $\beta>0$ such that $s$ and $s+\beta \delta_o$ both stabilize to the all $1$ configuration.
By toppling $s+\beta \delta_o$ in two stages (Example~\ref{e.stages}), it follows that $1+\beta \delta_o$ stabilizes to $1$, so $G$ is transient by Lemma \ref{l.diracplusone}.
\end{proof}

In the preceding lemma the hypothesis that $s$ is i.i.d.\ can be substantially weakened: The proof uses only the fact that with positive probability, the conditional distribution of $s(o)$ given $\{s(x)\}_{x \neq o}$ is not a single atom.

\section{Proof of Theorem \ref{main}}

\subsection{Singly transient case} \label{s.strans}

Recall Green's function \eqref{e.gdef}. In this section we assume that $g(o,y)<\infty$ for all $y \in V$ but
	\begin{equation} \label{e.singlytransient} \sum_{y \in V} g(o,y)^2 = \infty. \end{equation}
Define $V_n= \{ x \in V : d(x,o) \le n \}$ where $d$ is the graph distance. 
Then $V_1 \subset V_2 \subset \ldots$ are finite sets with $\bigcup_{n \geq 1} V_n = V$.
Let $g_n(x,y)$ be the expected number of visits to $y$ by simple random walk started at $x$ and killed on exiting $V_n$.
By the monotone convergence theorem, for fixed $x,y \in V$ we have
	\[ g_n(x,y) \uparrow g(x,y) \]
as $n \to \infty$.
In particular, setting
	\[ \nu_n := \left( \sum_{y \in V_n} g_n(o,x)^2 \right)^{1/2} \]
we have $\nu_n \uparrow \infty$ as $n \to \infty$ by \eqref{e.singlytransient}.

The proof of the following lemma is inspired by
	\cite[Theorem 3.1]{FR} and \cite[Theorem 3.5]{FMR}.
\begin{lemma} \label{l.nondeg} Let $G=(V,E)$ be singly transient \eqref{e.singlytransient},  vertex transitive graph.
Let $\{ s(x)\}_{x \in V}$ be i.i.d.\ with $\mathbb{E} s = 1$ and $\Var s < \infty$. If \[ \frac{1}{\nu_n} \sum_{x \in V_n} g_n(o,x) (s(x) - 1) \]
converges in distribution as $n \to \infty$ to a nondegenerate normal random variable~$Z$, then $\PP \{ s \text{\emph{  stabilizes}} \} = 0$.\end{lemma}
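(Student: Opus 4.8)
The plan is to argue by contradiction: assume $\PP\{s \text{ stabilizes}\} = 1$ and use the hypothesized weak convergence to a nondegenerate normal $Z$ to violate the nonnegativity of the odometer. First I would invoke Lemma~\ref{l.allones}: since $\PP$ is $\Gamma$-invariant (the i.i.d.\ law is invariant under the full automorphism group of the vertex-transitive graph) and $\EE s = 1$, stabilization forces $s_\infty \equiv 1$, so the odometer $u_\infty$ satisfies $\Delta u_\infty = 1 - s$ pointwise, with $u_\infty \geq 0$ everywhere. The goal is then to extract a candidate formula for $u_\infty(o)$ in terms of the killed Green functions $g_n$ and show it cannot stay nonnegative.

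The key step is to relate $u_\infty$ to the finite sums $\frac{1}{\deg(o)}\sum_{x \in V_n} g_n(o,x)(s(x)-1)$. I would use the toppling-in-nested-volumes procedure of Example~\ref{e.nest} with the exhaustion $V_n$: between times $n-1$ and $n$ we topple in parallel until all of $V_n$ is stable. Let $u^{(n)}$ denote the odometer of the configuration $s$ stabilized \emph{only within} $V_n$ (killing mass that leaves $V_n$, or equivalently stabilizing on $V_n$ with a sink at $\partial V_n$); by the same computation as in the proof of Lemma~\ref{l.diracplusone} (summing the identity \eqref{e.ptlaplacian} for the walk killed on exiting $V_n$), one gets $u^{(n)}(o) = \frac{1}{\deg(o)}\sum_{x \in V_n} g_n(o,x)(s(x)-1)^{+\cdots}$ — more precisely, stabilizing the signed configuration $s$ on $V_n$ produces odometer $u^{(n)}(y) = \sum_{x} g_n^{\text{walk}}(x,y)(s(x)-1)/\deg(y)$ by solving $\Delta_{V_n} u^{(n)} = 1 - s$ with Dirichlet boundary conditions. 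By the least action principle (Proposition~\ref{lap}) and the monotonicity $g_n \uparrow g$, the finite-volume odometers are dominated by the true odometer: $u^{(n)}(o) \leq u_\infty(o)$ for every $n$, because any legal toppling within $V_n$ is a legal toppling procedure for $s$ on $G$, so Lemma~\ref{c.lap} gives the bound. Hence, if $s$ stabilizes, the random variables $u^{(n)}(o)$ are all $\leq u_\infty(o) < \infty$, a fixed finite (a.s.) quantity.

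Now I would derive the contradiction. We have $u^{(n)}(o) = \frac{1}{\deg(o)}\left(\sum_{x \in V_n} g_n(o,x)(s(x)-1)\right)$. Since $\nu_n \to \infty$, write $u^{(n)}(o) = \frac{\nu_n}{\deg(o)} \cdot \frac{1}{\nu_n}\sum_{x \in V_n} g_n(o,x)(s(x)-1)$. By hypothesis the second factor converges in distribution to $Z$, a nondegenerate normal; in particular $\PP(Z < -\epsilon) > 0$ for some $\epsilon > 0$. By the Portmanteau theorem, $\liminf_n \PP\left(\frac{1}{\nu_n}\sum_{x\in V_n} g_n(o,x)(s(x)-1) < -\epsilon\right) \geq \PP(Z < -\epsilon) > 0$. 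On the event that this negative deviation occurs at level $n$, we get $u^{(n)}(o) \leq -\frac{\epsilon \nu_n}{\deg(o)} < 0$, contradicting $u^{(n)}(o) \geq 0$ (the odometer of a genuine finite-volume stabilization of the configuration with a sink is nonnegative — sites only emit nonnegative mass). More carefully: if $\PP\{s \text{ stabilizes}\} = 1$, then $u_\infty(o) < \infty$ a.s., so for any $K$ we can find $N$ with $\PP(u_\infty(o) > K) < \PP(Z<-\epsilon)/2$; choosing $n$ large enough that $\epsilon \nu_n / \deg(o) > K$ and that the Portmanteau bound is within $\PP(Z<-\epsilon)/2$ of its limit gives a positive-probability event on which $0 \leq u^{(n)}(o) \leq u_\infty(o) \leq K < \epsilon\nu_n/\deg(o) \leq -u^{(n)}(o)$, forcing $u^{(n)}(o) \leq 0 < u^{(n)}(o)$ — wait, I must be careful about the sign of $u^{(n)}(o)$ itself, since $s(x)-1$ can be negative. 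The correct statement is that $u^{(n)}$ is the odometer for the legal stabilization of $s$ restricted to $V_n$ (with the rest frozen/absorbing), hence nonnegative by definition of a toppling procedure, and $u^{(n)}(o) \leq u_\infty(o)$ by the least action principle. So on the stated positive-probability event we'd have $0 \leq u^{(n)}(o) \leq K$, yet simultaneously $u^{(n)}(o)$ equals a quantity that is $< -\epsilon\nu_n/\deg(o) < -K < 0$, a contradiction. Therefore $\PP\{s \text{ stabilizes}\} = 0$.

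\textbf{Main obstacle.} The delicate point is step two: correctly identifying $u^{(n)}(o)$ with $\frac{1}{\deg(o)}\sum_{x\in V_n} g_n(o,x)(s(x)-1)$ and justifying the domination $u^{(n)}(o) \leq u_\infty(o)$. One must set up the finite-volume toppling so that (a) it is a legal toppling procedure for $s$ on all of $G$ (so Lemma~\ref{c.lap} / Proposition~\ref{lap}(i) applies), and (b) its limit odometer solves the Dirichlet problem $\Delta_{V_n} u^{(n)} = 1 - s$ on $V_n$. Point (b) requires knowing the finite-volume stabilization reaches the all-$1$ configuration on the interior of $V_n$, which holds provided we allow the boundary $\partial V_n$ to absorb excess — i.e.\ we should stabilize on $V_n$ with a sink, matching exactly the combinatorial meaning of $g_n$. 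Getting this bookkeeping right (and handling the case $\sum_x g_n(o,x)(s(x)-1)$ where some terms $g_n$ for $x$ near $\partial V_n$ are small) is where the care is needed; the probabilistic conclusion via Portmanteau is then routine.
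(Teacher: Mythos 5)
There is a genuine gap, and it sits exactly at the step you yourself flagged as delicate. Your argument hinges on the exact identity $u^{(n)}(o) = \frac{1}{\deg(o)}\sum_{x\in V_n} g_n(o,x)(s(x)-1)$, i.e.\ on the claim that stabilizing $s$ inside $V_n$ (with mass absorbed outside) drives the configuration to the all-$1$ state on $V_n$. That claim is false: divisible sandpile topplings only remove excess above $1$ and never raise a deficient site up to $1$, so after stabilizing within $V_n$ the configuration $\xi_n$ satisfies only $\xi_n\le 1$ on $V_n$, and with positive probability (e.g.\ when many sites of $V_n$ start with mass below $1$) it is strictly below $1$ on a large set. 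The all-$1$ conclusion is special to a finite graph whose total mass is exactly $\abs{V}$ (Lemma~\ref{l.massn}); it does not hold for the Dirichlet problem in a finite window of an infinite graph with a sink. Consequently the correct relation is $u_n(y)=r^{-1}\sum_{x\in V_n} g_n(x,y)(s(x)-1)+r^{-1}\sum_{x\in V_n} g_n(x,y)(1-\xi_n(x))$, where $r$ is the common degree and the second term is nonnegative but not zero: the finite-volume odometer only \emph{dominates} the Green-function sum, it does not equal it. Your contradiction then evaporates: on the event where the normalized sum is very negative, the nonnegative correction term simply absorbs the negativity (this is exactly what happens when the window has a mass deficit), and no violation of $u^{(n)}(o)\ge 0$ occurs.

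The repair is to use the opposite tail of $Z$, which is what the paper does. From the one-sided bound $u_n(o)\ge r^{-1}\sum_{x\in V_n} g_n(o,x)(s(x)-1)$ one gets $\PP\{u_n(o)>\epsilon\nu_n\}\ge\PP\{\nu_n^{-1}\sum_{x\in V_n} g_n(o,x)(s(x)-1)>r\epsilon\}$, and the right side tends to $\PP(Z>r\epsilon)>0$. If $s$ stabilized a.s., then $u_n(o)\uparrow u_\infty(o)<\infty$ a.s.\ (here your observation that nested-volume topplings are legal for $s$ on all of $G$ is correct and is precisely what is needed), while $\nu_n\uparrow\infty$, so $\PP\{u_n(o)>\epsilon\nu_n\}\to 0$, a contradiction. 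In short, the decisive contradiction comes from the a.s.\ finiteness of $u_\infty(o)$ played against the positive tail of $Z$, not from nonnegativity played against the negative tail.
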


\begin{proof}
Assume to the contrary that $s$ stabilizes a.s.. Let $u$ be the nested volume toppling procedure (Example \ref{e.nest}). Then for each $n \in \N$,
\begin{equation}\label{e.finitevolume}
s + \Delta u_n = \xi_n \qquad \text{on } V_n
\end{equation}
with $\xi_n \leq 1$ on $V_n$.
Equation \eqref{e.finitevolume} can be rewritten as
\[
u_n (y) = r^{-1}\sum_{x \in V_n} g_n(x,y) (s(x) -1) + r^{-1}\sum_{x \in V_n} g_n(x,y) (1  - \xi_n(x))
\]
where $r$ is the common degree
(both sides have Laplacian $\xi_n - s$ in $V_n$ and vanish on $V_n^c$).
Observe that the second term is a nonnegative random variable. Therefore, for any $\eps>0$
\begin{equation} \label{e.decomp}
\mathbb{P} \{ u_n (o) > \epsilon \nu_n \}  \geq \mathbb{P} \left\{ \frac{1}{\nu_n} \sum_{x \in V_n} g_n(o,x) (s(x) - 1) >r \epsilon \right\}.
\end{equation}
Since $u$ is a legal toppling procedure and we have assumed that $s$ stabilizes a.s., we have $u_\infty(o) < \infty$, a.s..
Now since
$u_n(o) \uparrow u_\infty(o)$ and $\nu_n \uparrow \infty$, the left side of \eqref{e.decomp} tends to zero as $n \rightarrow \infty$.
However, the right side tends to a positive limit $P(Z>r\epsilon)>0$, which gives the desired contradiction.
\end{proof}

To complete the proof of Theorem~\ref{main} in the singly transient case, it remains to show that $\frac{1}{\nu_n} \sum_{x \in V_n} g_n(o,x) (s(x) - 1)$ converges in distribution to a nondegenerate normal random variable. We show this using Lindeberg-Feller central limit theorem as follows.

 \begin{lemma}\label{l.clt}
  Let  $\{X_{n1}: n \ge 1; i=1,\ldots,k_n\}$ be a triangular array of identically distributed random variables such that for each $n \in \N$,  $\{X_{ni}: i=1,\ldots,k_n\}$ is independent.
  Assume that $E[X_{11}]=0$ and $E[X_{11}^2]=1$. Let $a_{nk}>0$ be such that $\sum_{k=1}^{k_n} a_{nk}^2= 1$ and $\lim_{n \to \infty} b_n=0$ where $b_n = \max_{1\le k \le k_n} a_{nk}$. Then the sequence
  $Y_n= \sum_{k=1}^{k_n} a_{nk}X_{nk}$ converges in distribution to standard normal random variable as $n \to \infty$.
 \end{lemma}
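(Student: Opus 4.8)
The plan is to apply the Lindeberg--Feller central limit theorem to the triangular array $Z_{nk} := a_{nk} X_{nk}$, $1 \le k \le k_n$. First I would record the normalization: for each $n$ the variables $Z_{n1},\dots,Z_{nk_n}$ are independent, $\EE Z_{nk} = 0$, and $\Var Z_{nk} = a_{nk}^2 \EE[X_{11}^2] = a_{nk}^2$, so that $\sum_{k=1}^{k_n} \Var Z_{nk} = \sum_{k=1}^{k_n} a_{nk}^2 = 1$ for every $n$. This puts the array in the form required by Lindeberg--Feller and reduces the claim to verifying the Lindeberg condition: for each fixed $\eps>0$,
\[
L_n(\eps) := \sum_{k=1}^{k_n} \EE\!\left[ Z_{nk}^2 \, \one_{\{|Z_{nk}| > \eps\}} \right] \longrightarrow 0 \qquad (n \to \infty).
\]

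The main idea for the Lindeberg step is to use the uniform bound $a_{nk} \le b_n$ to decouple the truncation level from the index $k$. On the event $\{|Z_{nk}| > \eps\}$ we have $|X_{nk}| > \eps/a_{nk} \ge \eps/b_n$, hence the pointwise inequality $Z_{nk}^2 \one_{\{|Z_{nk}|>\eps\}} \le a_{nk}^2 X_{nk}^2 \one_{\{|X_{nk}| > \eps/b_n\}}$. Taking expectations and using that each $X_{nk}$ has the same law as $X_{11}$,
\[
L_n(\eps) \le \Bigl(\sum_{k=1}^{k_n} a_{nk}^2\Bigr) \, \EE\!\left[ X_{11}^2 \, \one_{\{|X_{11}| > \eps/b_n\}} \right] = \EE\!\left[ X_{11}^2 \, \one_{\{|X_{11}| > \eps/b_n\}} \right].
\]
Since $b_n \to 0$ forces $\eps/b_n \to \infty$, and since $\EE[X_{11}^2] = 1 < \infty$, dominated convergence gives $\EE[X_{11}^2 \one_{\{|X_{11}|>\eps/b_n\}}] \to 0$. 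Thus $L_n(\eps) \to 0$ for every $\eps>0$, and Lindeberg--Feller yields $Y_n = \sum_{k=1}^{k_n} Z_{nk} \Rightarrow N(0,1)$.

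I do not expect a genuine obstacle: the content is the standard passage from ``negligible weights plus a single square-integrable marginal law'' to the Lindeberg condition. The only points worth care are that identical distribution is assumed \emph{across the whole array} rather than merely within each row --- this is exactly what makes the truncated second moment $\EE[X_{nk}^2 \one_{\{|X_{nk}|>\eps/b_n\}}]$ independent of $k$ and $n$, so a single dominated-convergence argument closes the estimate --- and that the version of Lindeberg--Feller invoked requires precisely row-wise independence, zero means, and row variances summing to a constant (here $1$), all of which hold here.
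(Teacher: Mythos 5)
Your proposal is correct and follows essentially the same route as the paper: apply Lindeberg--Feller to the array $a_{nk}X_{nk}$, bound the truncated terms by $a_{nk}^2\,\EE[X_{11}^2\,\one_{\{|X_{11}|>\eps/b_n\}}]$ using $a_{nk}\le b_n$ and identical distribution, and conclude via $b_n\to 0$ and $\EE[X_{11}^2]<\infty$. No gaps; the dominated-convergence step you spell out is exactly what the paper leaves implicit.
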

\begin{proof}
 By Lindeberg-Feller central limit theorem \cite[Theorem 27.2] {Bil} it suffices to check the Lindeberg condition:
 \[
  \lim_{n \to \infty} \sum_{k=1}^{k_n} \EE[ a_{nk}^2 X_{nk}^2 \mathbf{1}_{\abs{a_{nk} X_{nk}} > \epsilon} ]= 0
 \]
for all $\epsilon>0$. Since
$ \EE[ X_{nk}^2 1_{\abs{a_{nk} X_{nk}} > \epsilon} ] \le  \EE[ X_{11}^2 1_{\abs{ X_{11}} > \epsilon/b_n} ] $ and $\sum_k a_{nk}^2=1$,
we have
 \[
  \lim_{n \to \infty} \sum_{k=1}^{k_n} \EE[ a_{nk}^2 X_{nk}^2 \mathbf{1}_{\abs{a_{nk} X_{nk}} > \epsilon} ] \le \lim_{n \to \infty} \EE[ X_{11}^2 \mathbf{1}_{\abs{ X_{11}} > \epsilon/b_n} ] =0
 \]
 because  $\lim_{n \to \infty} b_n=0$ and $\EE[X_{11}^2]=1$.
\end{proof}

\begin{lemma}\label{l.singlytransient}
Let $G=(V,E)$ be a singly transient \eqref{e.singlytransient}, vertex transitive graph. Suppose $\{ s(x)\}_{x \in V}$ are i.i.d.\ random variables with $\EE s = 1$ and $ 0< \var s < \infty$. Then $\PP \{s \text{ stabilizes} \} = 0$.
\end{lemma}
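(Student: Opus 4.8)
The plan is to combine Lemma~\ref{l.nondeg} with the triangular-array central limit theorem of Lemma~\ref{l.clt}: all that remains is to show that $\frac{1}{\nu_n}\sum_{x\in V_n} g_n(o,x)(s(x)-1)$ converges in distribution to a nondegenerate Gaussian, and then quote Lemma~\ref{l.nondeg}. To set up the array, write $\sigma^2 = \var s \in (0,\infty)$, enumerate $V_n = \{x_{n1},\dots,x_{nk_n}\}$ with $k_n = \abs{V_n}$, and put
\[
X_{nk} = \frac{s(x_{nk})-1}{\sigma}, \qquad a_{nk} = \frac{g_n(o,x_{nk})}{\nu_n} \qquad (1 \le k \le k_n).
\]
Since the $s(x)$ are i.i.d.\ with mean $1$ and variance $\sigma^2$, the $X_{nk}$ are identically distributed with $\EE X_{11}=0$, $\EE X_{11}^2=1$, and $\{X_{nk}\}_{k=1}^{k_n}$ is independent for each $n$. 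Also $a_{nk}>0$: a geodesic from $o$ to $x_{nk}$ stays inside the ball $V_n$, so simple random walk reaches $x_{nk}$ before exiting $V_n$ with positive probability. Finally $\sum_{k=1}^{k_n} a_{nk}^2 = \nu_n^{-2}\sum_{x\in V_n} g_n(o,x)^2 = 1$ by the definition of $\nu_n$.

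The one substantive point is the negligibility hypothesis $b_n := \max_k a_{nk} \to 0$. I would derive it from the uniform bound $g_n(o,x) \le g(o,o) < \infty$, valid for every $x\in V_n$: writing $g_n(o,x) = \PP_o(\text{SRW visits } x \text{ before exiting } V_n)\cdot g_n(x,x)$ and using $g_n(x,x) \le g(x,x) = g(o,o)$ (monotone convergence $g_n(x,x)\uparrow g(x,x)$ together with vertex-transitivity). Hence $b_n \le g(o,o)/\nu_n$, which tends to $0$ because $\nu_n\uparrow\infty$ by the singly transient assumption~\eqref{e.singlytransient}. This is really the only place where both transience ($g(o,o)<\infty$) and the specific singly transient condition enter.

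With the hypotheses of Lemma~\ref{l.clt} verified, that lemma gives that $Y_n = \sum_k a_{nk}X_{nk} = \frac{1}{\sigma\nu_n}\sum_{x\in V_n} g_n(o,x)(s(x)-1)$ converges in distribution to a standard normal, so $\frac{1}{\nu_n}\sum_{x\in V_n} g_n(o,x)(s(x)-1)$ converges in distribution to $N(0,\sigma^2)$, which is nondegenerate since $\sigma^2>0$. Lemma~\ref{l.nondeg} now yields $\PP\{s \text{ stabilizes}\}=0$, completing the singly transient case of Theorem~\ref{main}. Everything except the bound $g_n(o,x)\le g(o,o)$ is bookkeeping to match notation against Lemmas~\ref{l.clt} and~\ref{l.nondeg}.
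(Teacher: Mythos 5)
Your proposal is correct and follows essentially the same route as the paper: set up the triangular array $X_{nx}=(s(x)-1)/\sqrt{\var s}$, $a_{nx}=g_n(o,x)/\nu_n$, verify $b_n\to 0$ via the uniform bound $g_n(o,x)\le g(o,o)$ together with $\nu_n\uparrow\infty$, apply Lemma~\ref{l.clt}, and conclude with Lemma~\ref{l.nondeg}. The only cosmetic difference is that the paper bounds $\max_{x\in V_n} g_n(o,x)$ by $g_n(o,o)\le g(o,o)$ directly, whereas you pass through $g_n(x,x)\le g(x,x)=g(o,o)$ using vertex-transitivity; both yield the same estimate $b_n\le g(o,o)/\nu_n$.
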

\begin{proof}
Since $g(o,o)< \infty$ and $\nu_n \uparrow \infty$, we have
\[
 \lim_{n \to \infty} \max_{x \in V_n} \frac{g_n(o,x)}{\nu_n}  =  \lim_{n \to \infty}  \frac{g_n(o,o)}{\nu_n} \le   \lim_{n \to \infty}  \frac{g(o,o)}{\nu_n} =0.
\]
For $n \in \N$ and $x \in V_n$, define $X_{nx}= (s(x)-1)/\sqrt{\var{s}}$ and $a_{nx}= \frac{g_n(o,x)}{ \nu_n}$ and
$b_{n}= \max_{x \in V_n} a_{nx}$. By Lemma~\ref{l.clt}, we have that
\[ \frac{1}{\nu_n} \sum_{x \in V_n} g_n(o,x) (s(x) - 1) \] converges in distribution to a normal random variable with mean 0 and variance $\var(s)$. Lemma~\ref{l.nondeg} implies the desired conclusion.
\end{proof}

\subsection{Doubly transient case} \label{s.dtrans}

We start by outlining our proof strategy. Recall from Lemma~\ref{l.allones} that if $s$ stabilizes with $\EE s(o)=1$ then it must stabilize to the constant configuration $s_\infty \equiv 1$. In particular, the odometer  $u_\infty$ satisfies $\Delta u_\infty = 1-s$.  In Lemma~\ref{l.vfun} below, by convolving $s-1$ with Green's function we can build an explicit function $v$ with Laplacian $1-s$; the convolution is defined almost surely provided that
	\begin{equation} \label{e.greensumofsquares} \sum_{y \in V} g(o,y)^2 < \infty. \end{equation}
(This condition says that the expected number of collisions of two independent random walks started at $o$ is finite.  The essential feature of Green's function here is of course that
	\begin{equation} \label{e.laplacianofg} \Delta g(\cdot,y) = -r\delta_y \end{equation}
where $r$ is the common degree of all vertices of $G$, and $\delta_y(x) = \one\{x = y\}$.)

Having built the function $v$, the difference $v-u_\infty$ is then a random harmonic function with $\Gamma$-invariant law, which must be an almost sure constant by the following lemma.

\begin{lemma}
\label{l.stationaryharmonic}
\moniker{Harmonic Functions With Invariant Law}
Let $\Gamma$ be a group of automorphisms of $G$ that acts transitively on the vertex set $V$.
Suppose that $h: V \to \R$ has $\Gamma$-invariant law and $\Delta h \equiv 0$.
If $h$ can be expressed as a difference of two functions $h=v-u$ where $u \geq 0$ and $\sup_{x \in V} \EE v(x)^+ <\infty$, then $h$ is almost surely constant.
\end{lemma}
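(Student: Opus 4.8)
The plan is to exploit $\Gamma$-invariance together with the one-sided moment bound to force the harmonic function $h$ to have constant law, and then upgrade ``constant in law'' to ``almost surely constant'' using harmonicity. First I would reduce to showing that $h(x) - h(o)$ is almost surely zero for each fixed $x$. Since $h$ is harmonic and $\Gamma$ acts transitively, the mean value property at $o$ gives $h(o) = \frac{1}{r}\sum_{y \sim o} h(y)$, so $\sum_{y\sim o}\bigl(h(y)-h(o)\bigr)=0$ pointwise; iterating, for any $x$ the difference $h(x)-h(o)$ is a (signed) average of differences $h(y)-h(o)$ over vertices $y$ further out. The key point is that $\Gamma$-invariance of the law of $h$ makes all the random variables $h(x)-h(o)$ ``exchangeable enough'' that a single integrability input controls them uniformly.

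The main mechanism I would use is the following: let $D(x) := h(x) - h(o)$. By $\Gamma$-invariance, for any automorphism $\alpha$ with $\alpha o = x$ we have $(h(\alpha^{-1} z))_z \eqindist (h(z))_z$, hence $h(x) - h(o) \eqindist h(o) - h(\alpha^{-1}o)$, i.e. $D(x) \eqindist -D(\alpha^{-1} o)$, and more usefully $D$ applied along a path has increments whose laws are all translates of finitely many fixed laws. The decomposition $h = v - u$ with $u \ge 0$ and $\sup_x \EE v(x)^+ < \infty$ gives $\EE\, h(x)^+ \le \EE\, v(x)^+ \le C$ for all $x$, so $\{h(x)^+ : x \in V\}$ is bounded in $L^1$; combined with $\Gamma$-invariance this yields a uniform bound $\EE\,|h(x) - h(o)| \le C'$ as well, since $h(o)^+$ has the same law as $h(x)^+$ (transitivity) and $h(x)^- = h(x)^+ - h(x)$ can be controlled after noting $\EE h(x)$ is a finite constant independent of $x$ — here one uses that $h$ has a well-defined finite mean because $h^+ \in L^1$ and, via harmonicity plus invariance, $\EE h$ is forced to be finite. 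Then the standard argument for invariant harmonic functions applies: write $h(o) - \frac{1}{|B_n|}\sum_{x \in B_n} h(x)$ and show it tends to $0$ in $L^1$ by harmonicity (averages of harmonic functions over concentric balls telescope), while $\Gamma$-invariance forces $\EE\bigl|\frac{1}{|B_n|}\sum_{x\in B_n}(h(x) - h(o))\bigr|$ to be handled by a maximal-type or martingale-type inequality; concluding $\Var$-free, that $h(x) = h(o)$ a.s.

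Concretely, I would run the argument the way it is usually done for invariant harmonic functions on transitive graphs: consider $M := \EE h(o) \in \R$ (finite as above), set $\tilde h = h - M$, so $\EE \tilde h(x) = 0$ for all $x$. Harmonicity gives $\tilde h(o) = \EE_{X}\bigl[\tilde h(X_n)\bigr]$ where $X_n$ is the random walk from $o$; then $\EE|\tilde h(o)| \le \EE\bigl|\EE_X \tilde h(X_n)\bigr|$, and I would show the right side is small using that $\tilde h(X_n)$ has the same one-sided integrability bounds uniformly and that the walk spreads out — the cleanest route is to note $(\tilde h(X_n))_{n}$ is a martingale (in the walk's filtration, for a.e.\ realization of $\tilde h$) that is bounded in $L^1$, hence converges a.s.; combining with $\Gamma$-invariance forces the limit to be degenerate, giving $\tilde h(o) = 0$ a.s.

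The step I expect to be the main obstacle is precisely converting the \emph{one-sided} control $\sup_x \EE v(x)^+ < \infty$ (which only bounds $\EE h(x)^+$, not $\EE|h(x)|$, since $u$ could be large) into genuine $L^1$ control of $h$. The resolution is that $\Gamma$-invariance makes the distribution of $h(x)$ the same for all $x$ (it does \emph{not}, actually — only the joint law is invariant, but the one-point marginals are indeed all equal by transitivity), so $\EE h(x)^+ = \EE h(o)^+ =: c < \infty$ is automatic and equal across vertices; then the harmonic mean value property $h(o) = \frac1r \sum_{y\sim o} h(y)$ integrated gives $\EE h(o) = \frac1r\sum_{y\sim o}\EE h(y)$, which is consistent but not yet finite — to get finiteness of $\EE h(o)^-$ one argues that if $\EE h(o)^- = \infty$ then $\EE h(o) = -\infty$, yet by the mean value property along the walk, Fatou applied to $-h$ (bounded above in $L^1$) forces $\EE h(X_n)$ to be eventually $-\infty$ too, contradicting invariance once one checks a mild uniform-integrability-from-below statement — or, more simply, one invokes that a harmonic function bounded above in $L^1$ with invariant law on a transitive graph must be in $L^1$, which is where the hypothesis is really used. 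Once $h \in L^1$ with invariant law and $\Delta h = 0$, the conclusion that $h$ is a.s.\ constant is the known ``invariant harmonic functions are constant'' fact for transitive graphs, provable by the martingale convergence argument sketched above.
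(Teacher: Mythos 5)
There is a genuine gap, and it sits exactly where you flagged the ``main obstacle.'' Your argument funnels through establishing that $h$ has a finite mean --- i.e.\ that the one-sided bound $\sup_x \EE v(x)^+ <\infty$ can be upgraded to $h(o)\in L^1$ --- and both of your proposed resolutions fail. The invoked principle ``a harmonic function bounded above in $L^1$ with invariant law on a transitive graph must be in $L^1$'' is not only unproven, it is false under the hypotheses of the lemma: take $h\equiv c$ a single random constant with $\EE c^+<\infty$ but $\EE c^-=\infty$, and set $v=h$, $u=0$. All hypotheses hold (and the conclusion holds trivially), yet $\EE h(o)=-\infty$; so no correct proof can pass through finiteness of $\EE h(o)$. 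Your Fatou argument also yields no contradiction, because if $\EE h(o)=-\infty$ then invariance makes $\EE h(x)=-\infty$ at every vertex simultaneously, which is perfectly consistent. A related slip: your intended endgame ``$\tilde h(o)=0$ a.s.'' would assert that $h$ equals a \emph{deterministic} constant, which is stronger than the statement and false in the same example; the most one can extract from stationarity plus a.s.\ convergence of $h(X_n)$ is that $h(X_n)=h(X_0)$ for all $n$, i.e.\ $h$ is an a.s.\ (possibly random) constant.

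The missing idea is to truncate rather than to integrate $h$ itself. For each fixed $a\in\R$, the map $x\mapsto (h(x)-a)^+$ is subharmonic (convexity of $t\mapsto (t-a)^+$), and since $h\le v$ and $\sup_x \EE v(x)^+<\infty$, the process $M_n:=a+(h(X_n)-a)^+$ along the simple random walk satisfies $M_n\le a+(v(X_n)-a)^+$ and is therefore a submartingale bounded in $L^1$ uniformly in $n$; hence it converges a.s. Because this holds for every $a$, the sequence $h(X_n)$ itself converges a.s.\ --- no integrability of $h$ is ever needed. Then $\Gamma$-invariance of the law of $h$ (together with transitivity) makes $(h(X_n))_{n\ge 0}$ a stationary sequence, and a stationary sequence that converges a.s.\ must satisfy $h(X_n)=h(X_0)$ for all $n$ a.s.; connectivity of $G$ then gives $h(x)=h(o)$ a.s.\ for every $x$. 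Your outline does contain the martingale-convergence-plus-stationarity germ of this argument, but without the truncation step the integrability needed to run it is simply not available, so the proposal as written does not close.
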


\begin{proof}
Let $(X_n)_{n \geq 0}$ be simple random walk on $G$ started at $X_0=o$. Although $h$ is harmonic, $h(X_n)$ need not be a martingale since it need not have finite expectation. But for any $a \in \R$, since $h = v-u$ and $u \geq 0$, the truncation
	\begin{align*} M_n &:= a + (h(X_n)-a)^+ \\
		&\ghost{:}\leq a + (v(X_n) - a)^+ \end{align*}
has finite expectation. Since the function $t \mapsto (t-a)^+$ is convex, $x \mapsto (h(x)-a)^+$ is subharmonic, so $M_n$ is a submartingale:
	\[ \EE[M_{n+1} | h, X_0, \ldots, X_n] = \frac1r \sum_{w \sim X_n} (a + (h(w)-a)^+) \geq M_n. \]
A submartingale bounded in $L^1$ converges almost surely \cite[11.5]{Williams}. Since this holds for any $a \in \R$ it follows that $h(X_n)$ converges almost surely.
But since $h$ has $\Gamma$-invariant law, $h(X_n)$ is a stationary sequence,
so the only way it can converge a.s.\ is if $h(X_n)=h(X_0)$ for all $n$. Since $G$ is connected, for any vertex $x \in V$ there exists $n$ with $\PP(X_n=x)>0$, so $h(x)=h(o)$.
\old{
 Thus the usual proof of upcrossing lemma works. Since $u \geq 0$ we have $\sup \mathbb{E}h(X_n)^+ \le \mathbb{E} v(X_n)^+ < \infty$, and
we can modify to the proof of martingale convergence theorem (Theorem 5.2.8 in \cite{Dur}) to show that $h(X_n)$ converges almost surely.
}
\end{proof}

\begin{lem} \label{l.vfun} Let $\Gamma$ be a countable subgroup of $\aut(G)$ which acts transitively on the vertices of $V$.
Suppose $\{ \sigma(x) \}_{x \in V}$ are i.i.d.\ random variables with $\mathbb{E} \sigma(x) = 0$ and $0 < \var \sigma(\zero) < \infty$ Let $y_1,y_2,\ldots$ be an enumeration of the vertex set of $G$. For $\alpha \in \Gamma$, let
	\begin{equation} \label{e.vz} v_\alpha(x) := \frac{1}{r} \sum_{i=1}^\infty g(x,\alpha y_i) \sigma(\alpha y_i). \end{equation}
If $G$ is doubly transient \eqref{e.greensumofsquares}, then the following hold almost surely.
	\begin{enumerate}[{\em (a)}]
	\item The series defining $v_\alpha(x)$ converges for all $x \in V, \alpha \in \Gamma$.
	\item $v_\alpha = v_\beta$ \ for all $\alpha, \beta \in \Gamma$.
	\item $\Delta v_e = -\sigma$ where $e$ denotes the identity automorphism.
	\item $v_e$ has $\Gamma$-invariant law, that is $T_\alpha v_e \eqindist v_e$ for all $\alpha \in \Gamma$.
	\item $v_e$ is unbounded above and below.
	\end{enumerate}
\end{lem}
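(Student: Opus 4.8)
The plan is to prove (a)--(e) in order. Parts (a)--(d) come down to the observation that $v_\alpha(x)$ is an orthogonal $L^2$-series, and the substance is in (e). For (a): fix $x\in V$, $\alpha\in\Gamma$ and set $S_n=\frac1r\sum_{i=1}^n g(x,\alpha y_i)\sigma(\alpha y_i)$; the summands are independent and mean zero, so $(S_n)$ is a martingale with $\EE S_n^2=\frac{\var\sigma}{r^2}\sum_{i\le n}g(x,\alpha y_i)^2\le \frac{\var\sigma}{r^2}\sum_{y\in V}g(o,y)^2<\infty$ by vertex-transitivity and the double-transience assumption \eqref{e.greensumofsquares}. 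Hence $S_n$ converges a.s.\ and in $L^2$; since $V$ and $\Gamma$ are countable, one full-measure event covers all pairs $(x,\alpha)$. For (b): an orthogonal series $\sum_z c_z\sigma(z)$ with $\sum_z c_z^2<\infty$ has an order-independent $L^2$-limit, and the a.s.\ limit must agree with it, so $v_\alpha(x)=v_\beta(x)$ a.s.; again take a countable intersection. Henceforth write $v:=v_e$, so $v(x)=\frac1r\sum_{z\in V}g(x,z)\sigma(z)$ as an order-independent $L^2$-sum, with $\EE v(x)=0$.

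For (c), note $\Delta v(x)=\sum_{w\sim x}(v(w)-v(x))$ is a \emph{finite} linear combination of the $v(w)$, so I can pass $\Delta$ through the series term by term; by \eqref{e.laplacianofg} we have $\Delta_x g(x,y_i)=-r\delta_{y_i}(x)$, so the $n$-th partial sum of $\Delta v(x)$ already equals $-\sigma(x)$ once $x$ occurs among $y_1,\dots,y_n$, hence $\Delta v=-\sigma$. For (d), automorphism-invariance of the walk gives $g(\alpha^{-1}x,z)=g(x,\alpha z)$, so substituting $z\mapsto\alpha z$ yields $(T_\alpha v)(x)=v(\alpha^{-1}x)=\frac1r\sum_z g(x,z)\sigma(\alpha^{-1}z)$; that is, $T_\alpha v$ is built from $v$ by replacing the field $(\sigma(z))_z$ with $(\sigma(\alpha^{-1}z))_z$, which has the same law by exchangeability, so $T_\alpha v\eqindist v$. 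This same identity shows the map $\sigma\mapsto v$ is $\Gamma$-equivariant, a fact used in (e).

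For (e) I first claim $\PP(v(o)>t)>0$ for every $t\in\R$, and symmetrically $\PP(v(o)<-t)>0$. Write $v(o)=\frac1r\sum_{i\le n}g(o,y_i)\sigma(y_i)+R_n$ with $R_n$ independent of $\sigma(y_1),\dots,\sigma(y_n)$; since $\EE R_n=0$, $\PP(R_n\ge-1)>0$. If $\sigma(o)$ is unbounded above, take $n=1$: the positive-probability event $\{\sigma(y_1)\text{ large}\}\cap\{R_1\ge-1\}$ forces $v(o)>t$. If $\sigma(o)$ has finite essential supremum $m$ (necessarily $m>0$, since $\EE\sigma(o)=0$ and $\var\sigma(o)>0$), use that $\sum_{i\le n}g(o,y_i)\uparrow\sum_z g(o,z)=\infty$ because the walk takes infinitely many steps; then for small $\eps>0$ and $n$ large, on the positive-probability event $\{\sigma(y_i)>m-\eps\text{ for all }i\le n\}\cap\{R_n\ge-1\}$ we get $v(o)>\frac{m-\eps}{r}\sum_{i\le n}g(o,y_i)-1>t$. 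The lower-tail argument is symmetric, using that $\operatorname{ess\,inf}\sigma(o)<0$. To finish, I use ergodicity: the i.i.d.\ field on $\R^V$ is $\Gamma$-ergodic (because $G$ is infinite, locally finite and vertex-transitive, for any finite $S\subset V$ there is $\alpha\in\Gamma$ with $\alpha S\cap S=\emptyset$, obtained by translating a vertex far from $S$, giving the standard cylinder-approximation proof). For each $t$, the event $\{\sup_{x\in V}v(x)>t\}$ is a $\Gamma$-invariant event of the i.i.d.\ field (by the equivariance from (d)), hence has probability $0$ or $1$; it contains $\{v(o)>t\}$, which has positive probability, so it has probability $1$. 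Intersecting over $t\in\Z$ gives $\sup_x v(x)=+\infty$ a.s., and symmetrically $\inf_x v(x)=-\infty$ a.s.

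I expect the main obstacle to be the claim in (e) that $v(o)$ has unbounded support, specifically when $\sigma$ is bounded: then no single summand $g(o,y_i)\sigma(y_i)$ can be large, and one must instead exploit the divergence $\sum_z g(o,z)=\infty$ to make many summands simultaneously near-maximal. The remaining step --- promoting unboundedness of the single random variable $v(o)$ to unboundedness of the whole field $v$ --- is then a soft ergodicity argument, needing only that i.i.d.\ fields on an infinite vertex-transitive graph are ergodic under the automorphism group.
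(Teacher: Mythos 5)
Your proof is correct and follows essentially the same route as the paper: termwise variances plus square-summability of Green's function for (a), order-independence of the orthogonal series for (b), the same finite-sum interchange for (c), the same equivariance computation for (d), and for (e) a positive-probability extreme event exploiting the divergence of $\sum_{y} g(o,y)$ followed by an ergodicity/zero--one argument to pass from $v(o)$ to the whole field. The only cosmetic differences are that in (b) you invoke unconditional $L^2$-convergence where the paper compares partial sums via Chebyshev, and in (e) you bound the tail using $\EE R_n=0$ (and work with the upper tail first) where the paper uses a Chebyshev estimate on the tail sum.
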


\begin{proof}
(a) Each term in the series \eqref{e.vz} is independent, and the $i$-th term has variance $g(x,\alpha y_i)^2 U$ where $U = r^{-2}\EE \sigma(\zero)^2$.  By \eqref{e.greensumofsquares} the sum of these variances is finite, which implies that the series converges a.s..

(b) Note that there is something to check here because the series defining $v_\alpha$ is only conditionally convergent.
Fix $x \in V$ and denote the partial sum $r^{-1}\sum_{i=1}^n  g(x,\alpha y_i) \sigma(\alpha y_i)$ by $v_{\alpha,n}$.
We compare $v_{\alpha,n}$ and $v_{e,n}$ where $e$ is the identity element of $\Gamma$.
Given $\epsilon > 0$, choose $n_1$ such that
	\[ U \sum_{i=n_1}^\infty  g(x, y_i)^2 < \epsilon^3. \]
 There exists $N_1$ depending on $n_1$ and $\alpha$ such that
 	\[ \{ y_i: i=1,2,\ldots,n_1 \} \subset \{ \alpha y_i: i=1,2,\ldots,N_1 \}. \]
This implies that for any $N \ge N_1$ we have $\var \left( v_{\alpha,N}-  v_{e,N} \right)
< 2\epsilon^3$. By Chebyshev's inequality,
\begin{eqnarray*}
\mathbb{P} \left( \left| v_{\alpha,N}-  v_{e,N} \right| >  \epsilon \right) \le \eps^{-2} \var \left( v_{\alpha,N} -  v_{e,N} \right)
 < 2 \epsilon
\end{eqnarray*}
for all $N \ge N_1$.
By part (a) there a.s.\ exists $N_2 \ge N_1$
such that $\max(|v_{\alpha,n}- v_\alpha(x)|,|v_{e,n}- v_e(x)|)< \eps/3$ for all $n \ge N_2$. By the triangle inequality it follows that
\begin{eqnarray*}
\mathbb{P} \left( \left| v_{\alpha}(x)-  v_{e}(x)\right| >  \frac{\eps}{3} \right) <  2 \epsilon.
\end{eqnarray*}
Since $\eps$ was arbitrary we obtain $v_\alpha(x) = v_e(x)$, a.s.. By taking countable intersections we have that $v_\alpha = v_e$, a.s..

(c) Using \eqref{e.laplacianofg}, we compute
\begin{eqnarray*}
\Delta v_e (x) &=& \frac{1}{r} \sum_{w\sim x} (v_e(w) - v_e(x)) \\
 &=& \frac{1}{r} \sum_{w \sim x} \left( \sum_{i=1}^\infty g(w,y_i) \sigma(y_i) - \sum_{i=1}^\infty  g(x,y_i) \sigma(y_i) \right) \\
  &=&  \sum_{i=1}^\infty \sigma(y_i) \frac{1}{r} \sum_{w \sim x} \left(  g(w,y_i)  -   g(x,y_i)  \right) \\
&=&  - \sum_{i=1}^\infty \sigma(y_i) \one \{y_i =x \} \\
&=& - \sigma(x).
\end{eqnarray*}

(d) To show that $v_e$ has $\Gamma$-invariant law, write $v_e = v_e^\sigma$ to make the dependence on the initial configuration $\sigma$ explicit. We have for all $\alpha \in \Gamma$
	\begin{align*}
	T_\alpha v_e^\sigma (x)
		&= \sum_{i=1}^\infty g(\alpha^{-1}x, y_i) \sigma(y_i) \\
		&= \sum_{i=1}^\infty g(x, \alpha y_i) \sigma(y_i) \\
		&= \sum_{i=1}^\infty g(x, \alpha y_i) (T_\alpha \sigma)(\alpha y_i) \\
		&=  v_\alpha^{T_\alpha \sigma}(x) \\
		&= v_e^{T_\alpha \sigma}(x)
	\end{align*}
where in the last equality we have used part (b). Hence $T_\alpha v_e^\sigma = v_e^{T_\alpha \sigma} \eqindist v_e^\sigma$ since $\sigma$ has $\Gamma$-invariant law.

(e)
To show that $v_e$ is almost surely unbounded below, we use the assumption that $\sigma(o)$ has zero mean and positive variance, which implies that
	\[ \PP(\sigma(o) < -\delta)>p \]
for some $p,\delta>0$.  Since $\sum_{y \in V} g(o,y) = \infty$ and $\sum_{y \in V} g(o,y)^2<\infty$, we can choose $N$ large enough so that
	\[ \delta r^{-1}\sum_{i=1}^N g(o,y_i) > 2M, \qquad U \sum_{i=N+1}^\infty g(o,y_i)^2 < 1. \]	
By Chebyshev's inequality
	\[ \PP ( r^{-1}\sum_{i=N+1}^\infty g(x,y_i) \sigma(y_i) \geq M) \leq \frac{1}{M^2}. \]
On the event that $\sigma(y_i)<-\delta$ for all $1 \leq i \leq N$ and $r^{-1}\sum_{i=N+1}^\infty g(x,y_i) \sigma(y_i) \geq M$ we have $v_e(o) < -2M + M$. Since the random variables $\sigma(y_i)$ are i.i.d., we obtain
	\[ \PP ( v_e(o) < -M) \geq p^N (1 - \frac{1}{M^2}) > 0. \]
Since $v_e$ is stationary, we have $\PP( \inf v_e < -M ) \in \{0,1\}$ by ergodicity. Since this probability is $>0$, it must be $1$. Since $M$ was arbitrary, $v_e$ is a.s.\ unbounded below.

A similar argument shows that $v_e$ is also a.s.\ unbounded above.
\end{proof}

\begin{lem}
Let $G=(V,E)$ be a doubly transient \eqref{e.greensumofsquares}, vertex transitive graph and let $\{ s(x)\}_{x \in V}$ be i.i.d.\ random variables with $\EE s = 1$ and $0 < \Var s < \infty$. Then $\PP \{ $s$ \text{ stabilizes} \} = 0$.
\end{lem}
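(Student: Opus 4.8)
The plan is to assume, for contradiction, that $s$ stabilizes almost surely, and then to play the explicit Green's-function solution of $\Delta v = 1-s$ furnished by Lemma~\ref{l.vfun} against the odometer $u_\infty$. First I would arrange a countable symmetry group: since $G$ is connected, locally finite and vertex-transitive, picking for each $x\in V$ one automorphism sending $o$ to $x$ and taking the subgroup $\Gamma\le\aut(G)$ they generate gives a \emph{countable} group acting transitively on $V$. The i.i.d.\ law of $s$ is $\Gamma$-invariant and ergodic, so $\{s\text{ stabilizes}\}$ has probability $0$ or $1$; suppose it is $1$.

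Set $\sigma:=s-1$, so the $\sigma(x)$ are i.i.d.\ with $\EE\sigma(\zero)=0$ and $0<\var\sigma(\zero)<\infty$. Since $G$ is doubly transient \eqref{e.greensumofsquares} and $\Gamma$ is countable, Lemma~\ref{l.vfun} applies: almost surely the series \eqref{e.vz} defines $v_e$ with $\Delta v_e=-\sigma=1-s$, the law of $v_e$ is $\Gamma$-invariant, and $v_e$ is unbounded below. On the other hand, since $s$ stabilizes a.s.\ and $\EE s(o)=1$, Lemma~\ref{l.allones} gives that the odometer $u_\infty\ge 0$ has $\Gamma$-invariant law and satisfies $\Delta u_\infty=1-s$. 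Therefore $h:=v_e-u_\infty$ is harmonic. To see that $h$ has $\Gamma$-invariant law I would use the explicit functional dependence already recorded in the proofs above: the proof of Lemma~\ref{l.vfun}(d) shows $T_\alpha v_e^\sigma=v_e^{T_\alpha\sigma}$, and the proof of Lemma~\ref{l.allones} shows $T_\alpha u_\infty$ is the odometer of $T_\alpha s$; thus $v_e$ and $u_\infty$ are deterministic functionals of $\sigma$ that commute with $T_\alpha$, and since $\sigma\eqindist T_\alpha\sigma$ we get $h\eqindist T_\alpha h$.

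Finally I would invoke Lemma~\ref{l.stationaryharmonic} with $v=v_e$ and $u=u_\infty$. The moment hypothesis holds because $\Gamma$-invariance gives $\EE v_e(x)^+=\EE v_e(o)^+\le(\EE v_e(o)^2)^{1/2}=\big(r^{-2}\EE\sigma(\zero)^2\sum_{y\in V}g(o,y)^2\big)^{1/2}<\infty$ by \eqref{e.greensumofsquares}. The lemma then forces $h$ to be an almost sure constant, so $v_e=u_\infty+h\ge h>-\infty$, contradicting that $v_e$ is almost surely unbounded below. Hence $\PP\{s\text{ stabilizes}\}=0$. The only point needing genuine care is the passage "$h$ has $\Gamma$-invariant law" — i.e.\ promoting the separate invariances of $v_e$ and $u_\infty$ to invariance of their difference — and checking that all three lemma applications take place on a single almost-sure event; everything else is bookkeeping.
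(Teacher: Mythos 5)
Your proposal is correct and follows essentially the same route as the paper: apply Lemma~\ref{l.allones} to get $\Delta u_\infty = 1-s$, build $v_e$ via Lemma~\ref{l.vfun}, note $h=v_e-u_\infty$ is harmonic with $\Gamma$-invariant law and second-moment control coming from \eqref{e.greensumofsquares}, and contradict the a.s.\ unboundedness below of $v_e$ via Lemma~\ref{l.stationaryharmonic}. Your extra care about the countable transitive subgroup and the joint (functional) equivariance making $h$ itself $\Gamma$-invariant only fills in details the paper leaves implicit.
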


\begin{proof} Assume to the contrary that $s$ stabilizes a.s.\ with odometer $u_\infty$.
Let $\Gamma$ be a countable subgroup of $\aut(G)$ that acts transitively on the vertices of $G$.
Let $v : V \to \R$ be given by equation \eqref{e.vz} with $\alpha = e$ and $\sigma = s-1$. By Lemma \ref{l.vfun}, $v$ has $\Gamma$-invariant law and $\Delta v = 1-s$. Therefore
	\[ h := v-u_\infty \]
has $\Gamma$-invariant law and $h$ is harmonic: $\Delta h \equiv 0$ on $V$.
Further, by Fatou's lemma, $\mathbb{E} v(x)^2 \leq \deg(x)^{-2} \var (s) \sum_{i=1}^\infty g(x,y_i)^2 < \infty$. Lemma~\ref{l.stationaryharmonic} now implies that $h$ is almost surely constant.
This contradicts Lemma \ref{l.vfun} because $u_\infty \ge 0$ and $v$ is almost surely unbounded below.
\end{proof}

\section{Stabilizability of Cones}
\label{s.cones}

Until now we have mainly been concerned with the stabilizability of random initial configurations. In this section we examine stabilizability of a few deterministic configurations on the square grid $\Z^2$.  We present two examples, one of which stabilizes.

\begin{lem}
Define $C_1 = \{ (x,y) \in \mathbb{Z}^2 : x \ge 0, \abs{y} \le x \}$. Then the configuration
$s_0= (1+ \alpha) \one_{C_1}$ does not stabilize for $\alpha>0$.
\end{lem}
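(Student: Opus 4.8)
The plan is to assume $s_0$ stabilizes and derive a contradiction from the fact that the expected time for simple random walk to leave $C_1$ is infinite.

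First I would use Corollary~\ref{c.equiv}: if $s_0$ stabilizes there is a function $u : \mathbb{Z}^2 \to \mathbb{R}$ with $u \ge 0$ and $s_0 + \Delta u \le 1$, hence $\Delta u \le -\alpha$ on $C_1$. Fix the apex $o = (0,0) \in C_1$, let $(X_t)_{t\ge 0}$ be simple random walk from $o$, and set $\tau = \inf\{t : X_t \notin C_1\}$. Since every vertex of $\mathbb{Z}^2$ has degree $4$, on the event $\{t < \tau\}$ one has $\mathbb{E}[\,u(X_{t+1}) - u(X_t) \mid X_0,\dots,X_t\,] = \tfrac14\Delta u(X_t) \le -\tfrac{\alpha}{4}$, so
\[
M_t := u(X_{t\wedge\tau}) + \tfrac{\alpha}{4}\,(t\wedge\tau)
\]
is a nonnegative supermartingale with $M_0 = u(o)$ (each $M_t$ is bounded, since $u$ is finite on the finite ball reachable by time $t$, so there is no integrability issue). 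Then $\tfrac{\alpha}{4}\,\mathbb{E}_o[t\wedge\tau] \le \mathbb{E}_o[M_t] \le u(o)$, and letting $t \to \infty$ by monotone convergence gives $\mathbb{E}_o[\tau] \le \tfrac{4}{\alpha}\,u(o) < \infty$.

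The key step is to show that, on the contrary, $\mathbb{E}_o[\tau] = \infty$, which I would do by the change of coordinates $P_t = A_t + B_t$, $Q_t = A_t - B_t$, where $X_t = (A_t, B_t)$. A point $(a,b)$ lies in $C_1$ exactly when $a+b \ge 0$ and $a-b \ge 0$, so $C_1$ corresponds under this linear map to the first quadrant $\{P \ge 0,\ Q \ge 0\}$, whence $\tau = \min(\tau_P, \tau_Q)$ with $\tau_P = \inf\{t : P_t < 0\}$ and $\tau_Q = \inf\{t : Q_t < 0\}$. Moreover the four nearest-neighbour steps of $(X_t)$ take the increment $(\Delta P_t, \Delta Q_t)$ to the four corners of $\{-1,+1\}^2$, each with probability $\tfrac14$, so $(P_t)_{t\ge 0}$ and $(Q_t)_{t\ge 0}$ are \emph{independent} simple random walks on $\mathbb{Z}$ started at $0$. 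Therefore
\[
\mathbb{E}_o[\tau] = \sum_{t\ge 0}\mathbb{P}(\tau_P > t)\,\mathbb{P}(\tau_Q > t) = \sum_{t\ge 0}\mathbb{P}(\tau_P > t)^2 .
\]
Since the number of length-$t$ $\pm1$ paths from $0$ that never become negative equals $\binom{t}{\lfloor t/2\rfloor}$, we have $\mathbb{P}(\tau_P > t) = 2^{-t}\binom{t}{\lfloor t/2\rfloor} \ge c\,t^{-1/2}$ for some $c>0$ and all $t \ge 1$, so the series diverges. This contradicts the bound of the previous paragraph, and therefore $s_0$ does not stabilize.

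I expect the only real obstacle to be the divergence $\mathbb{E}_o[\tau] = \infty$: it holds precisely because $C_1$ has opening half-angle exactly $\pi/4$, so each coordinate walk survives with probability of order $t^{-1/2}$ and the product is just barely non-summable; for any strictly narrower cone the corresponding series converges and this argument collapses, which is presumably why this particular cone is singled out. The remaining points are routine bookkeeping — the boundedness of $u$ on finite balls (needed to justify the supermartingale manipulation), the elementary check that $C_1$ maps onto the quadrant and that $(P_t,Q_t)$ has i.i.d.\ $\{\pm1\}^2$ increments, and the classical reflection/ballot identity behind the $t^{-1/2}$ lower bound.
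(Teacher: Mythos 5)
Your argument is correct, and it is genuinely different from the one in the paper. The paper works directly with the parallel toppling procedure: it tracks the mass funneled to the vertex $(1,0)$, bounding the odometer there below by $\frac{\alpha}{4}\sum_{(x,y)\in C} p(x,y)$, where $p(x,y)$ is the probability that the walk from $(x,y)$ exits the open cone at the apex; it then identifies $p$ with a second difference of the potential kernel (via a uniqueness theorem for harmonic functions of sublinear growth) and invokes Lawler's asymptotics to get $p(x,y)\gtrsim (x^2-y^2)/(x^2+y^2)^2$, whose sum over the cone diverges, so the odometer is infinite. You instead argue by contradiction from Corollary~\ref{c.equiv}: any $u\in\mathcal{F}_{s_0}$ has $\Delta u\le-\alpha$ on $C_1$, so the optional-stopping/supermartingale bound forces $\mathbb{E}_o[\tau]\le \frac{4}{\alpha}u(o)<\infty$ for the exit time of the cone from the apex, while the rotation $(P,Q)=(A+B,A-B)$ turns $C_1$ into the quadrant for two independent one-dimensional walks, giving $\mathbb{P}(\tau>t)=\bigl(2^{-t}\binom{t}{\lfloor t/2\rfloor}\bigr)^2\asymp t^{-1}$ and hence $\mathbb{E}_o[\tau]=\infty$. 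Your route is more elementary and self-contained: it needs only the least action principle, a bounded-increment supermartingale, and the classical ballot/reflection identity with Stirling, and it makes transparent that the result hinges on the $45^\circ$ cone being exactly the borderline case. What the paper's heavier machinery buys is quantitative harmonic-measure information inside general cones, which is the kind of estimate relevant to the conjectured sharp threshold for the cones $C_a$ treated in Lemma~\ref{l.conestab}; your exit-time argument, as you note, collapses for any strictly narrower cone and so does not by itself point toward the sharp constant. Minor bookkeeping points (finiteness of $u$ on finite balls, $\{\tau>t\}=\{\tau_P>t\}\cap\{\tau_Q>t\}$, independence of the increments in $\{\pm1\}^2$) are all handled correctly.
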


\begin{proof}
By least action principle (Proposition \ref{lap}), it suffices to show the existence of an infinite legal toppling procedure.

Let $u_k$ denote the parallel toppling procedure of Example \ref{e.parallel} where $k \in \N$.
Let $C= \{ (x,y)\in \mathbb{Z}^2: x > 0, \abs{y}< x  \}$. Let $(X_n,Y_n)$ denote the simple random walk on $\mathbb{Z}^2$ and let $N$ denote the stopping time $N= \min \{ n \ge 0: X_n= \abs{Y_n} \}$.
As in the proof of Lemma \ref{l.diracplusone}, we keep track of the mass from each $(x,y) \in C$ to obtain
\[
 u_k(1,0) \ge \frac{\alpha}{4} \sum_{(x,y) \in C} \sum_{l=0}^k \PP_{(x,y)} ( (X_{N},Y_{ N}) = (0,0), N=l  ).
\]
for all $k \in \N$.
To see this note that $N$ is the exit time of the set $C$ and the only way to exit $C$ at $(0,0)$ is from $(1,0)$.
As a result, we have
\[ u_\infty(1,0) \ge \frac{\alpha}{4} \sum_{(x,y) \in C} p(x,y) \]
 where $p(x,y)=\mathbb{P}^{(x,y)}\left( (X_N,Y_N)=(0,0) \right)$.
Consider the function $q:\mathbb{Z}^2 \to [0,1]$ given by
\begin{equation*}
q(x,y)=
\begin{cases} p(\abs{x},y) & \text{if $\abs{y}< \abs{x}$,}
\\
-p(\abs{y},x) &\text{if $\abs{x} <  \abs{y}$,}
\\
0 & \text{ otherwise.}
\end{cases}
\end{equation*}
Note that $\Delta q =  \delta_{(0,1)} + \delta_{(0,-1)} -\delta_{(1,0)}  -\delta_{(-1,0)}$. Let $g(x,y)$ denote the potential kernel in $\mathbb{Z}^2$ defined by
\[
 g(x,y)= \sum_{n=0}^\infty [\PP_{(0,0)} (X_n = (x,y)) - \PP_{(0,0)}(X_n = (0,0))]
\]
where $(X_n)_{n \in \N}$ denotes the simple random walk on $\Z^2$. Although the simple random walk on $\Z^2$ is transient, it turns out that the sum defining
$g$ is absolutely convergent.
By standard estimates on $g$ (See \cite[Chapter 1]{Law}), we know that $g$ has sub-linear (logarithmic) growth.
This combined with  \cite[Theorem 6.1]{HS} implies that
	\[ q(x,y) - \frac{1}{4} \left( g(x+1,y)+g(x-1,y)-g(x,y-1)-g(x,y+1)  \right) \] is identically zero because it is harmonic with  sub-linear growth and attains the value $0$ at $(0,0)$.
Therefore there exists $c_1>0$ such that for all $(x,y) \in C$, we have
\begin{eqnarray*}
q(x,y)&=&  \frac{1}{4} \left( g(x+1,y)+g(x-1,y)-g(x,y-1)-g(x,y+1)  \right) + \frac{1}{4} \Delta g(x,y)\\\
&=& \frac{1}{2} \left( g(x+1,y)+g(x-1,y)-2g(x,y) \right) \\
& > & c_1 \frac{x^2 - y^2}{(x^2+y^2)^2}
\end{eqnarray*}
The first line above follows from the fact that $\Delta g = 0$ for all points except $(0,0)$.
The last line above follows from \cite[Theorem 1.6.5 (b)]{Law}.
Therefore
$ u_\infty(1,0) \ge \frac{\alpha}{4} \sum_{(x,y) \in C} q(x,y) > \frac{\alpha c_1}{4} \sum_{(x,y) \in B} \frac{x^2-y^2}{ (x^2 +y^2)^2} = \infty$. Hence $s_0$ does not stabilize.
\end{proof}

We need the following technical lemma for the next example:
 \begin{lemma} \label{l.hillhole}
 Let $\sigma: V \to \R$ be a configuration in $G=(V,E)$. Assume that $H:= \{x : \sigma(x) > 1 \}$ satisfies $\abs{ H } < \infty $ and
 \[ \sum_{x \in H} (\sigma(x)- 1)^+ < \infty.
 \]
Let $F \subset V$ be such that $\abs{F} < \infty$ and
\[
   \sum_{x \in V} (\sigma(x)- 1)^+  \le   \sum_{x \in F} (1-\sigma(x))^+.
\]
Then $\sigma$ stabilizes.
\end{lemma}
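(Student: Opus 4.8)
The plan is to dominate $\sigma$ from above by a configuration of the form $\hat\sigma=1+\rho$, where $\rho$ is finitely supported with $\sum_x\rho(x)=0$, and then to stabilize $\hat\sigma$ by exhibiting a \emph{nonnegative} solution $f$ of the discrete Poisson equation $\Delta f=-\rho$. This suffices: if $f\ge 0$ and $1+\rho+\Delta f\le 1$, then $\sigma+\Delta f\le 1+\rho+\Delta f\le 1$, so $f\in\mathcal F_\sigma$; hence $\mathcal F_\sigma\ne\emptyset$ and $\sigma$ stabilizes by Corollary~\ref{c.equiv}.

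To set up the domination, write $m:=\sum_{x\in V}(\sigma(x)-1)^+=\sum_{x\in H}(\sigma(x)-1)<\infty$; we may assume $m>0$, since otherwise $\sigma\le 1$ is already stable. Replacing $F$ by $F\setminus H$ does not change $\sum_{x\in F}(1-\sigma(x))^+$, so I may assume $F\cap H=\emptyset$; then $\sigma\le 1$ on $F$ and $\sum_{x\in F}(1-\sigma(x))\ge m$, so one can pick $\nu:F\to[0,\infty)$ with $\nu(x)\le 1-\sigma(x)$ and $\sum_{x\in F}\nu(x)=m$. Set
\[
\rho:=\sum_{x\in H}(\sigma(x)-1)\,\delta_x-\sum_{x\in F}\nu(x)\,\delta_x,
\]
which is finitely supported, satisfies $\sum_x\rho(x)=m-m=0$, and obeys $\rho\ge\sigma-1$ pointwise (equality on $H$; $-\nu(x)\ge\sigma(x)-1$ on $F$; and $0\ge\sigma(x)-1$ elsewhere, since $\sigma\le 1$ off $H$). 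Thus $\hat\sigma:=1+\rho\ge\sigma$, and it remains to find $f\ge 0$ with $\Delta f=-\rho$.

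If $V$ is finite this is immediate: $\Delta$ is symmetric with kernel the constants, hence maps $\R^V$ onto the zero-sum functions, which contains $-\rho$; take $f=f_0-\min_V f_0$ for any solution $f_0$. So assume $V$ infinite. If the walk on $G$ is transient, let $g(x,y)=\sum_{j\ge 0}\PP_x(X_j=y)<\infty$, so that $\Delta_x g(\cdot,y)=-\deg(y)\,\delta_y$ and $g(x,y)=\PP_x(\tau_y<\infty)\,g(y,y)\le g(y,y)$; then $f_0:=\sum_y\frac{\rho(y)}{\deg(y)}g(\cdot,y)$ is bounded with $\Delta f_0=-\rho$, and $f:=f_0-\inf_V f_0\ge 0$ works. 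If the walk is recurrent, fix $x_0\in V\setminus\operatorname{supp}\rho$ and let $g^{x_0}(x,y)$ be the expected number of visits to $y$ from $x$ before hitting $x_0$ (finite by recurrence), as in Proposition~\ref{t.finite}. Then $g^{x_0}(x_0,\cdot)\equiv 0$, $g^{x_0}(x,y)=\PP_x(\tau_y<\tau_{x_0})\,g^{x_0}(y,y)\le g^{x_0}(y,y)<\infty$ for all $x$, and the Markov property (together with $\deg$ being a stationary measure for the walk, which gives $\sum_{w\sim x_0}g^{x_0}(w,y)=\deg(y)$) yields $\Delta_x g^{x_0}(\cdot,y)=-\deg(y)\,\delta_y+\deg(y)\,\delta_{x_0}$. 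Hence $f_0:=\sum_y\frac{\rho(y)}{\deg(y)}g^{x_0}(\cdot,y)$ is bounded and $\Delta f_0=-\rho+\bigl(\sum_y\rho(y)\bigr)\delta_{x_0}=-\rho$; take $f:=f_0-\inf_V f_0\ge 0$.

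The recurrent case is the crux. On a recurrent graph the ordinary Green's function is infinite, forcing us to kill the walk at an auxiliary vertex $x_0$; the cost is the spurious ``source'' $\deg(y)\,\delta_{x_0}$ in $\Delta g^{x_0}(\cdot,y)$, and it is precisely the balancedness $\sum_x\rho(x)=0$ that makes these cancel --- which in turn is exactly where the hypothesis $\sum_x(\sigma(x)-1)^+\le\sum_{x\in F}(1-\sigma(x))^+$ enters, as the condition that lets $\nu$, and hence the balanced $\rho$, be chosen. (By Lemma~\ref{l.diracplusone}, $1+\beta\delta_o$ does \emph{not} stabilize on a recurrent graph, so some term like $F$ is genuinely needed.) The one remaining point to watch is the boundedness of $x\mapsto g^{x_0}(x,y)$, which is what allows subtracting a constant from $f_0$ to reach $f\ge 0$ without disturbing $\Delta f=-\rho$.
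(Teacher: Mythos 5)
Your proof is correct, but it takes a somewhat different route from the paper's. The paper also reduces matters to exhibiting an explicit element of $\mathcal F_\sigma$, but instead of aggregating all the excess into one balanced charge $\rho$ and solving $\Delta f=-\rho$ globally, it first chooses a transport plan $a:H\times F\to[0,\infty)$ with $\sum_{y\in F}a(x,y)=\sigma(x)-1$ and $\sum_{x\in H}a(x,y)\le 1-\sigma(y)$, and then superposes the dipole potentials $f_{x,y}(\cdot)=g_y(x,\cdot)/\deg(\cdot)$ (the Green function killed at the \emph{sink} $y$ itself), which satisfy $\Delta f_{x,y}=\delta_y-\delta_x$ and are nonnegative on any connected locally finite graph. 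Setting $u=\sum_{x\in H,\,y\in F}a(x,y)f_{x,y}$ gives $u\ge 0$ and $\sigma+\Delta u\le 1$ directly, so the paper needs no trichotomy (finite/transient/recurrent), no auxiliary killing vertex $x_0$, and no subtraction of an infimum --- the nonnegativity is built in term by term. Your version buys a cleaner global statement (a bounded solution of $\Delta f=-\rho$ for any finitely supported balanced $\rho$, with recurrence handled via the excursion identity $\sum_{w\sim x_0}g^{x_0}(w,y)=\deg(y)$ and the cancellation of the spurious source at $x_0$ by balancedness), at the cost of the case analysis and of having to verify boundedness before shifting; the paper's pairing trick makes exactly those steps unnecessary. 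Both arguments hinge on the same hypothesis in the same way: the inequality $\sum_V(\sigma-1)^+\le\sum_F(1-\sigma)^+$ is what allows the excess to be matched with deficit (your $\nu$, the paper's $a$).
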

\begin{proof}
 Let $a: H \times F \to [0,\infty)$ be a non-negative function such that
 $\sum_{y \in F} a(x,y) = \sigma(x) -1  $ for all $x \in H$ and
 $ \sum_{x \in H} a(x,y) \le 1 -\sigma(y)$
for all $y \in F$.  The function $a$ encodes how to redistribute the excess mass from $H$ to $F$.

Let \[ f_{x,z}(y) = \frac{g_z(x,y)}{\deg(y)} = \frac{ \EE^x ( \mbox{number of vists to $y$ before being killed at $z$})}{\deg(y)}\] denote the Green's function normalized with degree.
Observe that $\Delta f_{x,z} = \delta_z - \delta_x$.
Therefore the function $u:= \sum_{x \in H, y \in F} a(x,y) f_{ x, y}$ satisfies $\sigma  + \Delta u \le 1$ and $u \ge 0$.
This in turn implies that $\sigma$ stabilizes.
\end{proof}
\begin{remark}
 The condition that $\abs{H}, \abs{F} < \infty$ in the above lemma is necessary. The following example illustrates this: Consider a probability measure $\mu$ on $\N^* = \{ 1,2,3,\ldots\}$
 and consider the function $\sigma_\mu = 1 + \delta_0 - \mu(x)$ where $\mu(x) = \mu ( \{ x\})$. Then it can be shown that $\sigma_\mu$ stabilizes if and only if $\sum_{x \in \N^*} x \mu(x) < \infty$.
\end{remark}

\begin{lem}
\label{l.conestab}
 Define  $C_a= \{ (x,y) \in \mathbb{Z}^2: x \ge 0, \abs{y} \le ax \}$.
 Then the configuration\ $s_a= m \one_{C_a}$ stabilizes if $\frac{2ma}{1+a^2} \le 1$ and $a \in (0,1]$. Moreover $s_0(x,y) = x \one_{\{x > 0, \, y=0\}}  $ stabilizes.
\end{lem}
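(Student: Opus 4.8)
Since a divisible sandpile configuration $s$ stabilizes if and only if $\mathcal{F}_{s}\neq\emptyset$ (Corollary~\ref{c.equiv}), the plan is to exhibit a single function $u\ge 0$ with $s_a+\Delta u\le 1$. If $m\le 1$ then $s_a\le 1$ and $u\equiv 0$ works; note that when $a=1$ the hypothesis $\tfrac{2ma}{1+a^2}\le 1$ forces $m\le 1$, so we may assume $0<a<1$ and $1<m\le\tfrac{1+a^2}{2a}$, and set $\mu:=m-1>0$. We must produce $u\ge 0$ with $\Delta u\le-\mu$ on $C_a$ (where $s_a=m$) and $\Delta u\le 1$ off $C_a$ (where $s_a=0$).

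I would first treat the ``moreover'' statement, which is the prototype. For $s_0$ take $u(x,y)=\tfrac14\big((x-|y|)^+\big)^2$: this is nonnegative, supported on the quadrant $C_1$, and a direct second-difference computation gives $\Delta u(x,0)=1-x$ on the positive $x$-axis (so $s_0+\Delta u=1$ there), $\Delta u=1$ at interior lattice points of $C_1$ off the axis, and $\Delta u\le 1$ at the remaining (boundary and origin) sites since $u$ vanishes to second order along $\partial C_1$. Hence $u\in\mathcal{F}_{s_0}$.

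For $s_a$ I would use the shifted, truncated quadratic
\[
 u(x,y)\;=\;\tfrac14\big((x+c-|y|)^+\big)^2\;-\;\tfrac1{4a}\big((a(x+c)-|y|)^+\big)^2 ,
\]
where $c$ is a sufficiently large integer depending on $a$ (for instance $c=\lceil 2/a\rceil$). On the shifted cone $\{|y|\le a(x+c)\}$ one has $u=\tfrac{1-a}{4a}\big(a(x+c)^2-y^2\big)$; on the shifted quadrant minus the shifted cone $u=\tfrac14(x+c-|y|)^2$; and $u=0$ off the shifted quadrant. Nonnegativity is immediate: $C_a\subset\{|y|\le\sqrt a\,(x+c)\}$, on which the quadratic piece is $\ge 0$, while the other two pieces are a perfect square and $0$. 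The point of the shift is that every lattice neighbour of every vertex of $C_a$ lies in the interior of the shifted cone, so on $C_a$ the Laplacian of $u$ is genuinely the polynomial second difference of $\tfrac14(x+c-|y|)^2$ minus that of $\tfrac1{4a}(a(x+c)-|y|)^2$. Computing these (the axis and off-axis cases both reduce to the same value) yields
\[
 \Delta u\;\equiv\;1-\tfrac{1+a^2}{2a}\;=\;-\tfrac{(1-a)^2}{2a}\qquad\text{on }C_a .
\]
Hence $\Delta u\le-\mu$ on $C_a$ holds \emph{precisely} when $\mu=m-1\le\tfrac{(1-a)^2}{2a}$, i.e.\ when $\tfrac{2ma}{1+a^2}\le 1$ --- this is exactly where the threshold in the statement comes from.

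It remains to check $\Delta u\le 1$ off $C_a$. Where all four neighbours remain in the shifted cone the value $-\tfrac{(1-a)^2}{2a}\le 1$ still applies; near the ray $|y|=a(x+c)$ one checks $\Delta u\le 1$ by inspecting the four neighbours, using that the subtracted term is nonnegative (so replacing it by $0$ at an exterior neighbour only lowers the Laplacian) and that the ``inward'' neighbours dominate. On the shifted quadrant minus the shifted cone, $u=\tfrac14(x+c-|y|)^2$ has second difference $1$ in the interior and $\le 1$ near $\partial$ and on the axis; and off the shifted quadrant $u=0$, so $\Delta u$ is a sum of small nonnegative neighbour values, $\le 1$ by the second-order vanishing of $u$ along $|y|=x+c$. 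The only real work is this last, purely local, verification around the two boundary rays and the vertex; the substance of the argument is the function $u$ and the identity $\Delta u|_{C_a}=-\tfrac{(1-a)^2}{2a}$.
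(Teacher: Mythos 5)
Your proposal is correct, and it is the same basic strategy as the paper's: exhibit an explicit nonnegative quadratic test function (shifted quadrant parabola minus a multiple of a shifted cone parabola) and invoke the characterization of stabilizability via $\mathcal{F}_s$ (Corollary~\ref{c.equiv} / Proposition~\ref{lap}). The paper uses $v_a(x,y)=u_1(x+\lceil 1/a\rceil,y)-m\,u_a(x+\lceil 1/a\rceil,y)$ with $u_a=\frac{(ax-|y|)^2}{2(1+a^2)}\one_{C_a}$; because the coefficient of the cone term depends on $m$, the $|y|$-kink survives, the Laplacian has several cases (axis, boundary rays, apex), and $s_a+\Delta v_a$ can exceed $1$ on the finite set $\{(x,0):-\lceil 1/a\rceil-1\le x\le 0\}$, which the paper then absorbs using the hill--hole Lemma~\ref{l.hillhole}. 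Your choice of the critical coefficient $\tfrac{1}{4a}$ (equivalently, working at $m=\tfrac{1+a^2}{2a}$, which suffices by monotonicity of stabilizability in $s$) makes the $|y|$-terms cancel, so on the shifted cone $u$ coincides with the genuine polynomial $\tfrac{1-a}{4a}\bigl(a(x+c)^2-y^2\bigr)$; together with the shift $c\ge 2/a$ this gives the clean identity $\Delta u\equiv -\tfrac{(1-a)^2}{2a}$ at every site of $C_a$ with no special cases, and it explains the threshold exactly. This buys a real simplification: your $u$ satisfies $s_a+\Delta u\le 1$ everywhere, so Lemma~\ref{l.hillhole} is not needed at all. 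The only place you wave your hands is the verification $\Delta u\le 1$ off $C_a$, but this is not a gap: writing $u=A-B$ with $A=\tfrac14\bigl((x+c-|y|)^+\bigr)^2$ and $B=\tfrac1{4a}\bigl((a(x+c)-|y|)^+\bigr)^2$, one has $\Delta A\le 1$ at every vertex (this is exactly the paper's computation of $\Delta u_1$, shifted), and for $|y|\ge 1$ the map $t\mapsto\bigl((t)^+\bigr)^2$ is convex in each coordinate direction, so $\Delta B\ge 0$ and hence $\Delta u\le 1$ there; the only remaining sites are axis points $(x,0)$ with $x<0$, where either all five relevant sites lie in the shifted cone (giving the negative constant) or a one-line computation for $0\le x+c<1/a$ gives $\Delta u=\tfrac{1-a}{2}+\tfrac12\bigl(a(x+c)^2-2(x+c)+1\bigr)\le 1$ (and $\Delta u=\tfrac{1-a}{4}$ at the apex, $0$ further left). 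Your treatment of the ``moreover'' case $s_0$ coincides with the paper's.
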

\begin{proof}
  The case $a=1$ is trivial. Define for $a \in (0,1]$  \[u_a(x,y) = \frac{ \left(ax - \abs{y}\right)^2 }{2(1+a^2) } \one_{C_a} .\]

To see that $s_0$ stabilizes, we check that $s_0 + \Delta u_1 \le 1$. This follows immediately from the computation of $\Delta u_1$ as
\begin{eqnarray*}
\Delta u_1(x,y)&=& \begin{cases} 1-x & \mbox{if } x > 0 ,y =0 \\
1 & \mbox{if }\abs{y} < x, x>0  \\
\frac{1}{2}& \mbox{if }\abs{y} = x, x>0  \\
\frac{1}{4} & \mbox{if } x=y=0  \\
0 & \mbox{otherwise.}
\end{cases}
\end{eqnarray*}

\old{  In $C_a$ we have
 \begin{eqnarray*}
(1+a^2) (u_1 - m u_a) &=&  (1+a^2)(x- \abs{y})^2 - 2m (ax - \abs{y})^2 \\
& \ge & 2m\left( a(x- \abs{y})^2 -(ax - \abs{y})^2 \right)\\
& = & 2m(1-a) \left( a x^2 - \abs{y}^2 \right)\\
& \ge & 2m(1-a) \left( a^2 x^2 - \abs{y}^2 \right) \\
 & \ge 0
\end{eqnarray*}
Outside $C_a$ it is clear that $u_1 - m u_a \ge 0$. }
A direct computation yields $\Delta u_a$ in different regions:
$\Delta u_a(0,0)= \frac{a^2}{2(1+a^2)} \le \frac{1}{4}$.
If $y=0, x > 0$ and all neighbors of $(x,0)$ are in $C_a$ (\emph{i.e.} $x \ge \lceil 1/a \rceil$), then
\begin{eqnarray*}
 \Delta u_a(x,0) &=& 1- \frac{2a}{1+a^2} x.
\end{eqnarray*}
If $y=0, x > 0$ and $(x,\pm 1) \notin C_a$, then
\begin{eqnarray*}
 \Delta u_a(x,0) &=& \frac{a^2(1-x^2)}{1+a^2}.
\end{eqnarray*}
If all neighbors of $(x,y)$ are in $C_a$ with $y \neq 0$, then
\begin{eqnarray*}
 \Delta u_a(x,y) &=& 1.
\end{eqnarray*}
If $(x,y) \in C_a$ with $y \neq 0$ and one of the neighbors is not in $C_a$, then
\[
0 < \frac{a^2}{1+a^2} \le \Delta u_a(x,y)  \le 1.
\]
If $(x,y) \notin C_a$ and if all neighbors of $(x,y)$ are not in $C_a$, then
\begin{eqnarray*}
 \Delta u_a(x,y) &=& 0.
\end{eqnarray*}
If $(x,y) \notin C_a$ and one of the neighbors in $C_a$, then
at-least 2 of the neighbors are not in $C_a$ and
\[
0 \le \Delta u_a(x,y)   < 1.
\]
Consider $v_a(x,y) = u_1(x+ \lceil 1/a \rceil,y) - m u_a(x+ \lceil 1/a \rceil,y) $. It is easy to check that
 $v_a \ge  0$ if  $\frac{2ma}{1+a^2} \le 1$.

If $x>0$ we have
\begin{eqnarray*}
s_a(x,0)+ \Delta v_a(x,0) &=& m  + \left(1- \lceil 1/a \rceil - x\right) -  m \left(1 - a\left( x+ \lceil 1/a \rceil\right) \right) \\
&\le & 1+ \left( \frac{2am}{1+a^2} - 1 \right)  \left(x+ \lceil 1/a \rceil\right) \\
& \le & 1.
\end{eqnarray*}
If $x>0,y\neq 0, (x,y) \in C_a$, then
\begin{eqnarray*}
s_a(x,y)+ \Delta v_a(x,y) &=& m  + 1 - m = 1.
\end{eqnarray*}
If $y\neq 0, (x,y) \notin C_a$, then
\begin{eqnarray*}
s_a(x,y)+ \Delta v_a(x,y) &\le& \Delta u_1 \le 1.
\end{eqnarray*}
If $x< - \lceil 1/a \rceil - 1$, then
\begin{eqnarray*}
s_a(x,y)+ \Delta v_a(x,y) &=& 0.
\end{eqnarray*}
Therefore $s_a+ \Delta v_a \le 1$ for all points except on the finite set $\{ (x , 0) : - \lceil 1/a \rceil - 1 \le x \le 0 \}$ and
$s_a+ \Delta v_a = 0$ for all $x <-1  - \lceil 1/a \rceil$. Lemma \ref{l.hillhole} implies that $s_a+ \Delta v_a $ stabilizes, and therefore
$s_a$ stabilizes.
\end{proof}

We conjecture that the bound in Lemma~\ref{l.conestab} is sharp.

\begin{conjecture}
 Define for $a \in (0,1]$, $C_a= \{ (x,y) \in \mathbb{Z}^2: x \ge 0, \abs{y} \le ax \}$.
 Then the divisible sandpile\ $s_a= m \one_{C_a}$ stabilizes if and only if $\frac{2ma}{1+a^2} \le 1$. Furthermore, the divisible sandpile $s_0 = k x \one_{\{x>0,\, y=0\}}  $ stabilizes if and only if $k\le 1$.
\end{conjecture}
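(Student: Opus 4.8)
The \emph{if} direction is already available: for $\tfrac{2ma}{1+a^2}\le 1$ it is precisely Lemma~\ref{l.conestab}, and the case $k<1$ follows from the lemma's statement for $k=1$ together with the trivial monotonicity $\mathcal{F}_{s'}\subseteq\mathcal{F}_{s}$ whenever $s\le s'$ (here $kx\,\one_{\{x>0,\,y=0\}}\le x\,\one_{\{x>0,\,y=0\}}$). So the open content of the conjecture is the \emph{only if} direction: if $\tfrac{2ma}{1+a^2}>1$ then $s_a$ does not stabilize, and if $k>1$ then $s_0$ does not stabilize.

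The plan for the cone statement is to argue by contradiction through the least action principle (Proposition~\ref{lap}). Assume $s_a$ stabilizes, with odometer $u_\infty\ge 0$. Since $s_a=m>1$ on the entire \emph{closed} cone $C_a$, every vertex of $C_a$ topples, so the stabilization is $\equiv 1$ on $C_a$, and therefore
\[
\Delta u_\infty = 1-m \quad\text{on } C_a,\qquad \Delta u_\infty \le 1 \quad\text{off } C_a .
\]
Write $\alpha=2\arctan a$ for the opening angle of $C_a$ and observe the identity $\tfrac{2a}{1+a^2}=\sin\alpha$, so that the conjectured threshold is $m\sin\alpha\le 1$. A natural first step is a flux computation: summing the first relation over finite balls $B_n\subset\Z^2$ turns $\sum_{v\in B_n}\Delta u_\infty(v)$ into a boundary flux on $\partial B_n$ whose growth is constrained by $u_\infty\ge 0$, which forces non-stabilization once $m$ exceeds a ``total capacity'' threshold of order $2\pi/\alpha=\pi/\arctan a$. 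This is strictly weaker than what is conjectured, and in the line case ($\alpha=0$) it gives nothing at all, so a sharper argument is needed. For that I would compare $u_\infty$ with the explicit quadratic barrier $u_a=\tfrac{(ax-|y|)^2}{2(1+a^2)}\,\one_{C_a}=\tfrac12\,\mathrm{dist}(\cdot,\partial C_a)^2$ of Lemma~\ref{l.conestab}: using $\Delta u_\infty=1-m$ on $C_a$ and $\Delta u_a\equiv 1$ in the bulk of $C_a$ to control $u_\infty-m\,u_a$, one should be able to extract from $u_\infty\ge 0$ that, along a normal to $\partial C_a$ at distance $r$ from the tip, $u_\infty$ rises with a slope growing linearly in $r$ with coefficient of order $m-1$. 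Pushing this estimate across $\partial C_a$ into $C_a^c$, where only $\Delta u_\infty\le 1$ and $u_\infty\ge 0$ are available and $u_\infty$ therefore cannot turn around quickly enough to stay nonnegative, should yield the contradiction exactly when $m\sin\alpha>1$. The statement for $s_0=kx\,\one_{\{x>0,\,y=0\}}$ is the degenerate wedge ($\alpha=0$): comparing with $u_1$ (for which $\Delta u_1=1-x$ along the positive $x$-axis) gives $\Delta u_\infty=(1-k)x+O(1)$ on the axis, and the excess emitted perpendicular to the axis piles up with a linearly growing density that cannot be reabsorbed once $k>1$.

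The step I expect to be the genuine obstacle --- and the reason this is only a conjecture --- is making the reabsorption estimate in $C_a^c$ sharp. The crude flux bound sees only the volume $|C_a^c\cap B_n|$ and hence loses the gap between $\pi/\arctan a$ and the conjectured $\tfrac{1+a^2}{2a}=1/\sin\alpha$; closing it requires pinning down the leading-order behaviour of the odometer near $\partial C_a$, equivalently the harmonic measure of the wedge $C_a$ (governed by the first angular Dirichlet eigenvalue $(\pi/\alpha)^2$) and the way a linearly growing mass profile deposited along a line disperses into $\Z^2$. This last sub-problem is exactly the $s_0$ half of the conjecture, so a clean proof of the one-dimensional statement ``$kx\,\one_{\{x>0,\,y=0\}}$ stabilizes if and only if $k\le 1$'' is likely to be the crucial ingredient for the general cone.
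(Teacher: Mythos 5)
You have proved only the half of this statement that the paper already proves: the \emph{if} direction is exactly Lemma~\ref{l.conestab}, and your reduction of $k<1$ to $k=1$ via the monotonicity $\mathcal{F}_{s'}\subseteq\mathcal{F}_s$ for $s\le s'$ is correct but routine. The statement you were asked to prove is a \emph{conjecture} in the paper --- the authors offer no proof of the ``only if'' direction, and neither do you. Everything after ``The plan for the cone statement is\dots'' is a programme, not an argument: the linear-slope lower bound for $u_\infty$ along normals to $\partial C_a$, the claim that $u_\infty$ ``cannot turn around quickly enough'' in $C_a^c$, and the passage from these to the sharp constant $1/\sin\alpha$ are all asserted, not established, and you say yourself that the reabsorption estimate is the genuine obstacle. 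So the proposal does not close the gap between the known sufficient condition $\tfrac{2ma}{1+a^2}\le 1$ and any necessary condition; even your ``crude flux bound'' is not actually proved, since the boundary flux $\sum_{v\in B_n}\Delta u_\infty(v)$ is a signed sum of odometer differences across $\partial B_n$ and is not controlled by $u_\infty\ge 0$ alone without an a priori growth bound on $u_\infty$.

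Two specific points to repair if you pursue this. First, your comparison step uses ``$\Delta u_a\equiv 1$ in the bulk of $C_a$'', which is false on the central axis: by the computation in the proof of Lemma~\ref{l.conestab}, $\Delta u_a(x,0)=1-\tfrac{2a}{1+a^2}x$ for $x\ge\lceil 1/a\rceil$, because $|y|$ has a kink at $y=0$. That negative ridge along the axis is precisely where the constant $\tfrac{2ma}{1+a^2}$ enters the sufficiency proof, so any sharp converse must exploit this ridge rather than average it away; a barrier argument that treats $\Delta u_a$ as constant in the interior cannot see the conjectured threshold. Second, your correct observations --- that stabilization of $s_a$ with $m>1$ forces $s_\infty\equiv 1$ on $C_a$, hence $\Delta u_\infty=1-m$ on $C_a$ and $0\le\Delta u_\infty\le 1$ off $C_a$, and that $\tfrac{2a}{1+a^2}=\sin\alpha$ with $\alpha=2\arctan a$ --- are a reasonable starting frame, but they are exactly the kind of soft information from which the authors, too, could not extract the sharp necessity; identifying that the $s_0$ (degenerate wedge) case is the key sub-problem is a sensible remark, but it remains unproved here as well.
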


More generally, we have the following problem.
\begin{open}[Tests for stabilizability] Given $s:\Z^d \to \R$, find series tests or other criteria that can distinguish between stabilizing and exploding $s$.
\end{open}

\section{Finite graphs}

Let $G= (V, E)$ be a finite connected graph with $|V| = n$. For a finite connected graph, all harmonic functions are constant: the kernel of $\Delta$ is $1$-dimensional spanned by the constant function $1$.

\begin{lemma}
\label{l.massn}
Let $s: V \to \R$ be a divisible sandpile with $\sum_{x \in V} s(x) = n$. Then $s$ stabilizes to the all $1$ configuration, and the odometer of $s$ is the unique function $u$ satisfying $s + \Delta u = 1$ and $\min u =0$.
\end{lemma}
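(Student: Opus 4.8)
The plan is to verify the three claims in turn: existence of a stabilizing odometer, that the stabilization is the all-$1$ configuration, and uniqueness of the odometer normalized by $\min u = 0$. First I would establish existence. Since $\sum_{x\in V}s(x)=n=|V|$, the discrete Poisson equation $\Delta u = 1-s$ has a solution $u:V\to\R$: indeed $1-s$ has zero sum, so it is orthogonal to the kernel of $\Delta$ (which on a finite connected graph is spanned by the constant function $1$), hence lies in the image of $\Delta$. Fix any such solution and subtract its minimum, so that $u\ge 0$ with $\min u = 0$ and still $\Delta u = 1-s$. Then $s+\Delta u = 1 \le 1$ pointwise, so $u\in\mathcal{F}_s$, and in particular $\mathcal{F}_s\ne\emptyset$, so $s$ stabilizes by Corollary~\ref{c.equiv}.

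Next I would identify the stabilization. Let $u_\infty$ denote the odometer and $s_\infty = s+\Delta u_\infty$ the stabilization, so $s_\infty\le 1$. Summing the identity $s_\infty = s + \Delta u_\infty$ over all $x\in V$ and using $\sum_{x\in V}\Delta u_\infty(x) = 0$ (each edge contributes $+$ and $-$ once, or: $\Delta$ is symmetric and annihilates constants, so $\sum_x \Delta f(x) = \langle \Delta f, 1\rangle = \langle f, \Delta 1\rangle = 0$), we get $\sum_{x\in V}s_\infty(x) = \sum_{x\in V}s(x) = n$. Combined with $s_\infty(x)\le 1$ for every $x$ and $|V| = n$, this forces $s_\infty\equiv 1$. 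Hence $\Delta u_\infty = 1 - s$, and by Proposition~\ref{lap}(iii) the odometer $u_\infty$ is the pointwise infimum over $\mathcal{F}_s$; since the $u$ constructed above lies in $\mathcal{F}_s$ and satisfies $\Delta u = 1-s = \Delta u_\infty$, the difference $u - u_\infty$ is harmonic, hence constant, and $u\ge u_\infty$ with $\min u_\infty \ge 0$; actually $\min u_\infty = 0$ because $u_\infty$ attains the infimum and $u-c\in\mathcal F_s$ for the appropriate constant would otherwise contradict minimality. So $u_\infty$ satisfies $s+\Delta u_\infty = 1$ and $\min u_\infty = 0$.

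Finally, uniqueness: if $u$ and $u'$ both satisfy $s+\Delta u = 1 = s+\Delta u'$, then $\Delta(u-u') = 0$, so $u - u'$ is harmonic on the finite connected graph $G$, hence equal to a constant $c$. If in addition $\min u = \min u' = 0$, then $c = \min u - \min u' = 0$, so $u = u'$. This shows the function characterized by $s+\Delta u = 1$ and $\min u = 0$ is unique, and by the previous paragraph it coincides with the odometer.

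I do not expect any serious obstacle here; the statement is essentially the finite-graph specialization of the machinery already developed (conservation of mass plus the least action principle of Proposition~\ref{lap}). The only mild subtlety is making sure the normalization $\min u_\infty = 0$ really holds for the odometer itself rather than merely for some solution of the Poisson equation; this follows because $u_\infty$ is the \emph{pointwise} infimum over $\mathcal{F}_s$ (Proposition~\ref{lap}(iii)) and $\mathcal F_s$ is invariant under subtracting any constant that keeps the function nonnegative, so the infimum touches $0$.
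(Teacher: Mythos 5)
Your proposal is correct and follows essentially the same route as the paper: solve $\Delta u = 1-s$ (possible since $1-s$ sums to zero and the kernel of $\Delta$ consists of constants), subtract the minimum to get a nonnegative element of $\mathcal{F}_s$, show by summing over $V$ that any $f\in\mathcal{F}_s$ in fact satisfies $s+\Delta f=1$ so that all such $f$ differ by constants, and conclude via the least action principle that the odometer is the smallest nonnegative one, hence has minimum $0$. No gaps.
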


\begin{proof}
Since $\Delta$ has rank $n-1$ and $\sum_{x \in V} (s(x) -1) = 0$, we have $s-1 = \Delta v$ for some $v$. Letting $w = v-\min v$, we have $w \geq 0$ and $s + \Delta w = 1$, so $s$ stabilizes.

Now if $u$ is any function satisfying $s+ \Delta u \leq 1$, then
	\[ \sum_{x\in V} (s+\Delta u)(x) = n \]
so in fact $s+\Delta u = 1$.  This shows that $s$ stabilizes to the all $1$ configuration, and moreover any two functions $u$ satisfying $s+\Delta u \leq 1$ differ by an additive constant. By the least action principle (Proposition~\ref{lap}), among these functions the odometer is the smallest nonnegative one, so its minimum is $0$.
\end{proof}

Fix $x,z \in V$ and let $f(y) = \frac{g^{z}(x,y)}{\deg(y)}$ be the function satisfying $f(z)=0$ and $\Delta f = \delta_z - \delta_x$. 
(Here $g^z(x,y)$ is the expected number of visits to $y$ by a random walk started at $x$ before hitting $z$).
With a slight abuse of notation, we define $g(x,y): = \sum_{z \in V} \frac{1}{n} g^{z}(x,y)$.

\begin{proof}[Proof of Proposition~\ref{t.finite}]
Observe that $\sum_{x \in V} s(x) = n$.  Therefore $s$ stabilizes to the all $1$ configuration by Lemma~\ref{l.massn}, and the odometer $u$ satisfies
	\[ s+ \Delta u = 1 \]
and $\min u = 0$.

Since $\Delta \frac{g^{z}(x,\cdot)}{\deg(\cdot)} = \delta_{z} - \delta_x$, the function
\begin{eqnarray*}
v^{z}(y) &:=& \frac{1}{\deg(y)}\sum_{x \in V} g^{z}(x,y) (s(x)-1)
\end{eqnarray*}
has $\Delta v^{z}(y) = 1-s(y)$ for $y \neq z$ and
	\[ \Delta v^{z}(z) = \sum_{x \neq z} (s(x)-1) = 1-s(z). \]
Thus $u-v^z$ is harmonic on $V$ and hence is a (random) constant.	

Let $v = \frac{1}{n} \sum_{z \in V} v^{z}$. Since $u-v^z$ is constant for all $z$, the difference $u-v$ is also constant.
Recalling that  $g = \frac{1}{n} \sum_{z \in V} g^{z}$, we have $v(y) = \frac{1}{\deg(y)}\sum_{x \in V} g(x,y)(s(x) - 1)$.  To compute the covariance of the Gaussian vector $v$, note that
	\[ \EE [(s(z)-1)(s(w)-1)] = 1_{\{z=w\}} - \frac1n \]
hence
\begin{align*}
\lefteqn{\mathbb{E}[v(x) v(y)] } \\&= \frac{1}{\deg(x)\deg(y)} \sum_{z,w \in V} g(z, x) g(w,y) \mathbb{E}[(s(z) - 1)(s(w) - 1)] \\
& = \frac{1}{\deg(x)\deg(y)} \left(\sum_{z \in V} g(z,x) g(z,y) - \frac{1}{n} \left( \sum_{z \in V} g(z,x)\right) \left( \sum_{w \in V} g(w,y) \right)\right).
\end{align*}

The function $K(y) :=\frac{1}{\deg(y)}\sum_{w \in V} g(w,y)$ has $\Delta K = \sum_{z,w \in V} \frac{1}{n} \left( \delta_z - \delta_w \right) = 0$, so $K$ is a constant. The second term on the right is just $\frac{K^2}{n}$.
Letting $C$ be a $N(0,\frac{K^2}{n})$ random variable independent of $v$, the Gaussian vectors $\eta$ and $(v(x)+C)_{x \in V}$ have the same covariance matrix, so
	\[ \eta \eqindist v + C. \]
Since $u-v$ is constant and $\min u = 0$ we conclude that \[ u = v - \min v \eqindist \eta - \min \eta. \qedhere \]
\end{proof}

\begin{table}
\begin{center}
\begin{tabular}{|l|l|l|l|l|l|}
\hline
 &  $d=1$ & $d=2$ & $d=3$ & $d=4$ & $d\geq 5$ \\[3pt]
\hline
$\mathbb{E}(\eta_\mathbf 0 - \eta_\mathbf x)^2 \lesssim \psi_d(n,r) := $ & $n r^2$ & $r^2 \log \left( \frac{n}{r} \right)$ & $r$ &  $\log (1+r) $ & $ 1 $ \\[3pt]
\hline
$\EE \max \{ \eta_\mathbf{x} \,:\, \mathbf{x} \in \Z_n^{d} \} \asymp$  & $n^{3/2}$ & $n$ & $n^{1/2}$ & $\log n$ & $(\log n)^{1/2}$ \\[3pt]
\hline
\end{tabular}
\end{center}
\caption{\label{table:orders} Statistics of the bi-Laplacian Gaussian field $\eta$ on the discrete torus $\Z_n^d$. In the first line, $r=\norm{\mathbf x}_2$ and the symbol $\lesssim$ means there is a dimension-dependent constant $C_d$ such that $\EE (\eta_\zero - \eta_\xx)^2 \leq C_d \psi_d(n,r)$ for all $\xx \in \Z_n^d$. The second line gives the order of the expected value of the maximum of the field
up to a dimension-dependent constant factor.
}
\end{table}
	
\section{Green function and bi-Laplacian field on $\Z_n^d$}

The rest of the paper is devoted to the proof of Theorem~\ref{p.torus}. Taking Proposition~\ref{t.finite} as a starting point, the expected odometer equals the expected maximum of the bi-Laplacian Gaussian field $\eta$, since
	\[ \EE u(x) = \EE (\eta_x - \min \eta) = - \EE \min \eta = \EE \max \eta \]
where we have used that $\EE \eta_x=0$. From the covariance matrix for $\eta$ we see that
	\[ \EE (\eta_x - \eta_y)^2 =\sum_{z \in V} \left(\frac{g(z,x)}{\deg(x)}-\frac{g(z,y)}{\deg(y)}\right)^2. \]
We will use asymptotics for the Green function $g$ of the discrete torus $\Z_n^d$ to estimate the right side.
This will enable us to use Talagrand's majorizing measure theorem to determine the order of $\EE \max \{\eta_\mathbf{x} \,:\, \mathbf{x} \in \Z_n^d\} $ up to a dimension-dependent constant factor.
These calculations are carried out below and summarized in Table~\ref{table:orders}.  The table entries give bounds up to a constant factor depending only on the dimension $d$. For example, the $d=3$ column means that there is a positive constant $C$ such that $\EE (\eta_0 - \eta_\mathbf{x})^2 \leq C\norm{\mathbf{x}}_2$ for all $\xx \in \Z_n^3$ and $C^{-1} n^{1/2} \leq \EE \max \eta \leq C n^{1/2}$.

\begin{rem}\label{r-notation}
For the rest of our work, we identify the discrete torus $\mathbb{Z}_n^d$ with $\left(  \mathbb{Z} \cap (-n/2, n/2] \right)^d$ which in turn is viewed as a subset of $\R^d$. For $\mathbf{x} \in \mathbb{Z}_n^d$ and $1 \le p \le \infty$, we denote by $\norm{\mathbf{x}}_p$ the $p$-norm
under the above identification. Note that for standard graph distance $d_G$ on $\mathbb{Z}^d_n$, we have $d_G(\mathbf{0}, \mathbf{x}) = \norm{\mathbf{x}}_1$.
\end{rem}

\subsection{Fourier analysis on the discrete torus}
In this section, we derive a formula for  $\mathbb{E}(\eta_{\mathbf{0}} - \eta_{\mathbf{x}})^2$. We begin by recalling some basic facts about Fourier analysis on the discrete torus and the spectral theory of the Laplacian.
We equip the torus $\mathbb{Z}_n^d$ with normalized Haar measure $\mu$ (in other words the uniform probability measure).
Consider the Hilbert space $\mathcal{H}=L^2( \mathbb{Z}_n^d, \mu)$ of complex valued functions on torus with inner product
\[
\langle f ,g  \rangle = \int_{\mathbb{Z}_n^d} f\bar{ g} \, d\mu = \frac{1}{n^d} \sum_{x \in \mathbb{Z}_n^d} f(x) \overline{g(x)}.
\]
We identify the Pontryagin dual group $\widehat{\mathbb{Z}_n^d}$ with $\mathbb{Z}_n^d$ as follows. For any $\mathbf{a} \in \mathbb{Z}_n^d$, the map $ \mathbf{x} \mapsto \exp\left( i 2 \pi \mathbf{x}\cdot\mathbf{a}/n \right)$
gives the corresponding element in $\widehat{\mathbb{Z}_n^d}$ (The dot product is the usual Euclidean dot product in $\mathbb{R}^n$). We denote this character by $\chi_\mathbf{a}$.
Recall that $\{ \chi_\mathbf{a} : \mathbf{a} \in \mathbb{Z}_n^d \}$ forms an orthonormal basis for $\mathcal{H}$. Moreover each $\chi_\mathbf{a}$ is an eigenfunction for the Laplacian $\Delta$
with eigenvalue
\[
\lambda_\mathbf{a}= -4 \sum_{i=1}^d \sin^2 \left(\frac{\pi a_i}{n} \right).
\]
Thus the Laplacian $\Delta:\mathcal{H} \to \mathcal{H}$ is a non-positive, bounded operator.
Moreover $\lambda_\mathbf{a} = 0$ if and only if $\mathbf{a}= \mathbf{0}$.
Laplacian $\Delta$ is a self-adjoint operator, that is
 \begin{equation} \label{sa}
 \langle f_1 , \Delta f_2 \rangle= \langle \Delta f_1, f_2 \rangle
\end{equation}
for all $f_1,f_2 \in \mathcal{H}$ (See Remark \ref{int-p}).
We denote by $g_\mathbf{x}(\mathbf{y})= g(\mathbf{y},\mathbf{x})$. Recall that
\begin{equation} \label{green}
\Delta g_\mathbf{x} = 2d \left( \frac{1}{n^d} \chi_\mathbf{0}- \delta_\mathbf{x}\right).
\end{equation}
Denote by $\widehat{g_\mathbf{x}}(\mathbf{a})$, the Fourier coefficient $\langle g_\mathbf{x}, \chi_\mathbf{a} \rangle$.
Since the function $\mathbf{x} \mapsto \sum_{\mathbf{y} } g_\mathbf{x}(\mathbf{y})$ is harmonic, it is constant. This implies that there exists $L \ge 0$ such that
\begin{equation}\label{ghat0}
\widehat{g_\mathbf{x}}(\mathbf{0})= n^{-d} \sum_{\mathbf{y} \in \mathbb{Z}^d_n} g_\mathbf x(\mathbf y)= L
\end{equation}
for all $\mathbf{x} \in \mathbb{Z}^d_n$ .
For $\mathbf{a} \neq \mathbf{0}$, we have
\begin{equation}\label{ghata}
 \lambda_\mathbf{a} \widehat{g_\mathbf{x}}(\mathbf{a})= \lambda_\mathbf{a} \langle g_\mathbf{x}, \chi_\mathbf{a} \rangle = \langle g_\mathbf{x}, \Delta \chi_\mathbf{a} \rangle= \langle \Delta g_\mathbf{x},  \chi_\mathbf{a} \rangle = -2d \langle \delta_\mathbf{x},  \chi_\mathbf{a} \rangle= - 2d n^{-d} \chi_{-\mathbf{a}}(\mathbf x).
\end{equation}
For the above equation, we used  $\Delta \chi_\mathbf{a}= \lambda_\mathbf{a} \chi_\mathbf{a}$, equations \eqref{sa}, \eqref{green} and $\langle \chi_\zero, \chi_\mathbf{a}\rangle=0$.
By Parseval's theorem and equations \eqref{ghat0}, \eqref{ghata},
\begin{eqnarray}
\nonumber \mathbb{E} (\eta_\mathbf{0}- \eta_\mathbf{x})^2 &=& (2d)^{-2} \sum_{\mathbf{z} \in \mathbb{Z}^d_n} \left(g(\mathbf{z}, \mathbf{0} )- g(\mathbf{z},\mathbf{x})\right)^2 \\
\nonumber &=& (2d)^{-2} n^d \langle g_\mathbf{0} - g_\mathbf{x},g_\mathbf{0} - g_\mathbf{x} \rangle\\
\nonumber & = & (2d)^{-2}n^d \sum_{\mathbf{z}  \in \mathbb{Z}^d_n} \abs{\widehat{g_\mathbf{0}}(\mathbf{z}) - \widehat{g_\mathbf{x}}(\mathbf{z}) }^2\\
 \label{e.four-form} &=& \frac{1}{4} F_{n,d} (\mathbf x)
\end{eqnarray}
where
\begin{equation} \label{Fnd}
 F_{n,d} (\mathbf x) := n^{-d} \sum_{\mathbf z \in \mathbb{Z}^d_n \setminus \{ \mathbf 0 \} } \frac{ \sin^2 \left( \frac{ \pi \mathbf{x}. \mathbf{z}}{n} \right) } { \left( \sum_{i=1}^d \sin^2 \left( \frac{\pi z_i}{n}\right) \right)^2}
 \end{equation}

 \begin{rem}\label{int-p}
  In $\mathbb{R}^d$, we have the Green's second identity \[\int f_1 \Delta f_2  \, dx = -\int \nabla f_1.\nabla f_2 \, dx = \int f_1 \Delta  f_2 \, dx \] for all $f_1,f_2 \in C_c^\infty (\R^d)$. Similarly, in our discrete setting we have
  \[
    \langle f_1 , \Delta f_2 \rangle= - \frac{1}{2 n^d} \sum_{\mathbf x, \mathbf y \in \mathbb{Z}^d_n} (f_1(\mathbf x)-f_2(\mathbf y))(\overline{f_2(\mathbf x)}-\overline{f_2(\mathbf y)})k(\mathbf x,\mathbf y) = \langle \Delta f_1, f_2 \rangle
  \]
where $k(\mathbf x, \mathbf y)= \mathbf 1_{x \sim y}$ and $\mathbf x \sim \mathbf y$ if $\mathbf x$ and $\mathbf y$ are neighbors in $\mathbb{Z}^d_n$. See for instance, \cite[Lemma 2.1.2]{S-C} or \cite[Lemma 13.11]{LPW} for a proof.
 \end{rem}
Our task now is to estimate the expression $F_{n,d}(\mathbf x)$. Henceforth we assume that $\mathbf{x} \neq \mathbf{0}$.
To study the quantity $\max _{\mathbf{x},\mathbf{y}\in \mathbb{Z}^d_n} (\eta_\mathbf{x} - \eta_\mathbf{y} )$ as $n$ goes to  $\infty$,
we want to estimate $\mathbb{E}(\eta_{\mathbf{0}} - \eta_{\mathbf{x}})^2$ with $d$ fixed and $n$ large for different values of $\mathbf{x}$.
We  approximate $F_{n,d}(\mathbf x)$ by an integral of a function over $\mathbb{R}^d$.
For $\mathbf w \in \mathbb{R}^d$ and $r >0$, we denote by $B_\infty(\mathbf w,r)$ the open ball with center $\mathbf w$ and radius $r$ under supremum norm, that is
\[
 B_\infty(\mathbf w,r) = \{ \mathbf y \in \mathbb{R}^d: \norm{\mathbf y - \mathbf w}_\infty <r \}.
\]
We denote the indicator function of the ball $B_\infty(\mathbf z/n , 1/(2n))$ by $I_{\mathbf z, n}:\mathbb{R}^d \to \{0,1\}$, that is $I_{\mathbf z, n} = \mathbf{1}_{B_\infty(\mathbf z/n , 1/(2n))}$.
Define the function $G_{n,d,\mathbf x} : \mathbb{R}^d \to \mathbb{R}$
\[
G_{n,d,\mathbf x}= \sum_{\mathbf z \in \mathbb{Z}^d_n \setminus \{ \mathbf 0 \} }  \frac{ \sin^2 \left( \frac{ \pi \mathbf{x}\cdot \mathbf{z}}{n} \right) } { \left( \sum_{i=1}^d \sin^2 \left( \frac{\pi z_i}{n}\right) \right)^2} I_{\mathbf z, n} .
\]
Since the cubes $B_\infty(\mathbf z/n , 1/(2n))$ are disjoint with volume $n^{-d}$ , we have
\begin{equation} \label{e.feqintg}
 F_{n,d}(\mathbf x)=  \int_{\mathbb{R}^d} G_{n,d,\mathbf x} (\mathbf y) \,d \mathbf y .
\end{equation}
By triangle inequality, we have
\begin{equation} \label{e.triangle}
(1+ \sqrt{d})^{-1} \norm{\mathbf{z}/n}_2 \le \norm{\mathbf y}_2 \le (1 + \sqrt{d}) \norm{\mathbf z/n}_2
\end{equation}
for all $\mathbf{z} \in \Z^d_n \setminus \{ \zero \}$ and for all $\mathbf y \in B_\infty(\mathbf z/n , 1/(2n))$ under the usual identification from Remark~\ref{r-notation}.
We will estimate the function $G_{n,d,\mathbf x}$ using the function $H_{n,d,\mathbf x} : \mathbb{R}^d \to \mathbb{R}$ defined by
\begin{equation*} \label{defH}
H_{n,d,\mathbf x}(\mathbf y) = \sum_{\mathbf z \in \mathbb{Z}^d_n \setminus \{ \mathbf 0 \} }  \frac{ \sin^2 \left( \frac{ \pi \mathbf{x} \cdot \mathbf{z}}{n} \right) } { \norm{\mathbf y}_2^4 } I_{\mathbf z, n}(\mathbf y) .
\end{equation*}
More precisely, we have the following lemma.
\begin{lem} \label{compLemma}
Fix $ d \in \mathbb{N}^*$. There exist positive reals $c_1,C_1$ such that
\begin{equation} \label{compGH}
 c_1 H_{n,d,\mathbf x}(\mathbf y) \le G_{n,d,\mathbf x}( \mathbf y)\le C_1 H_{n,d,\mathbf x}  (\mathbf y)
\end{equation}
for all $n \in \mathbb{N}^*$, for all $\mathbf x \in \mathbb{Z}^d_n \setminus \{ \mathbf 0 \}$ and for all $\mathbf y \in \mathbb{R}^d$.
\end{lem}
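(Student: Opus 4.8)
The plan is to reduce \eqref{compGH} to a one-term comparison and then to an elementary estimate for $\sin$. The cubes $B_\infty(\mathbf z/n,1/(2n))$, $\mathbf z\in\mathbb{Z}_n^d$, are pairwise disjoint, so for each fixed $\mathbf y\in\mathbb{R}^d$ at most one value of $\mathbf z$ makes $I_{\mathbf z,n}(\mathbf y)$ nonzero, and it is the \emph{same} $\mathbf z$ in both of the sums defining $G_{n,d,\mathbf x}(\mathbf y)$ and $H_{n,d,\mathbf x}(\mathbf y)$. If no such $\mathbf z$ exists (the index $\mathbf z=\mathbf 0$ being excluded from the sums anyway), both sides of \eqref{compGH} vanish and there is nothing to prove. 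Otherwise, after cancelling the common nonnegative factor $\sin^2(\pi\mathbf x\cdot\mathbf z/n)$ in the numerators (the case in which this factor vanishes is trivial), \eqref{compGH} becomes equivalent to the existence of constants $0<a\le A$, depending only on $d$, such that
\[
a\,\norm{\mathbf y}_2^{2}\;\le\;\sum_{i=1}^d\sin^2\!\Bigl(\frac{\pi z_i}{n}\Bigr)\;\le\;A\,\norm{\mathbf y}_2^{2}
\]
for all $\mathbf z\in\mathbb{Z}_n^d\setminus\{\mathbf 0\}$ and all $\mathbf y\in B_\infty(\mathbf z/n,1/(2n))$; one then takes $c_1=A^{-2}$ and $C_1=a^{-2}$.

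To prove this two-sided bound I would first invoke \eqref{e.triangle} to replace $\norm{\mathbf y}_2$ by $\norm{\mathbf z/n}_2=\bigl(\sum_{i=1}^d (z_i/n)^2\bigr)^{1/2}$ up to the factor $(1+\sqrt d)^{\pm1}$, which reduces matters to showing $\sum_i\sin^2(\pi z_i/n)\asymp\sum_i(z_i/n)^2$. This is now a coordinatewise estimate. Under the identification of $\mathbb{Z}_n^d$ with $\bigl(\mathbb{Z}\cap(-n/2,n/2]\bigr)^d$ from Remark~\ref{r-notation}, every coordinate satisfies $\pi z_i/n\in(-\pi/2,\pi/2]$, so Jordan's inequality $\tfrac{2}{\pi}|\theta|\le|\sin\theta|\le|\theta|$ (valid for $|\theta|\le\pi/2$) yields $4(z_i/n)^2\le\sin^2(\pi z_i/n)\le\pi^2(z_i/n)^2$. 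Summing over $i$ gives $4\norm{\mathbf z/n}_2^2\le\sum_i\sin^2(\pi z_i/n)\le\pi^2\norm{\mathbf z/n}_2^2$.

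Combining the last display with \eqref{e.triangle} produces the desired bound with $a=4(1+\sqrt d)^{-2}$ and $A=\pi^2(1+\sqrt d)^2$, hence the explicit admissible constants $c_1=\pi^{-4}(1+\sqrt d)^{-4}$ and $C_1=(1+\sqrt d)^4/16$. I do not anticipate any real obstacle here; the computation is routine. The only points requiring attention are the bookkeeping of the degenerate cases (the vanishing numerator and the excluded index $\mathbf z=\mathbf 0$), and the fact that it is precisely the identification of $\mathbb{Z}_n^d$ with the \emph{centered} box that confines each argument $\pi z_i/n$ to the range $[-\pi/2,\pi/2]$ on which $\sin$ is comparable to its argument.
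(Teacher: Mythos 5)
Your proof is correct and follows essentially the same route as the paper: the two-sided Jordan bound $\tfrac{2}{\pi}|t|\le|\sin t|\le|t|$ on $[-\pi/2,\pi/2]$ (using the centered identification of $\mathbb{Z}_n^d$) to compare $\sum_i\sin^2(\pi z_i/n)$ with $\norm{\mathbf z/n}_2^2$, combined with \eqref{e.triangle} to pass from $\norm{\mathbf z/n}_2$ to $\norm{\mathbf y}_2$ on the cube $B_\infty(\mathbf z/n,1/(2n))$. The only cosmetic difference is that the paper compares the sums term by term (all terms being nonnegative) rather than invoking disjointness of the cubes, but the estimates and resulting constants are the same.
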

\begin{rem} We will use  $C_i$ for large constants and $c_i$ for small constants. Here and in what follows, all constants are allowed to depend on $d$ but not on $\mathbf{x} \in \mathbb{Z}^d_n \setminus \{ \mathbf 0 \}$ or
 $n$.
\end{rem}
\begin{proof}
 The idea is to use the estimate
 \[
\frac{2}{\pi} \abs{t} \le  \abs{\sin t} \le \abs{t}
 \]
for all $t \in [-\pi/2,\pi/2]$. Thus there exists a constant $C_2>0$ such that
\begin{equation} \label{e-cl1}
C_2^{-1} \norm{\mathbf z/n}_2^4 \le \left( \sum_{i=1}^d \sin^2 \left( \frac{\pi z_i}{n}\right) \right)^2 \le C_2 \norm{\mathbf z/n}_2^4
\end{equation}
for all $\mathbf{z} \in \mathbb{Z}^d_n$ and for all $n \in \mathbb{N}^*$. By \eqref{e.triangle}, we have
\begin{equation} \label{e-cl2}
 (1+\sqrt{d})^{-4} \frac{I_{\mathbf z, n}(\mathbf y)}{ \norm{ \mathbf z/ n }_2^4} \le  \frac{I_{\mathbf z, n}(\mathbf y)}{ \norm{\mathbf y }_2^4}  \le (1+\sqrt{d})^4 \frac{I_{\mathbf z, n}(\mathbf y)}{ \norm{ \mathbf z/ n }_2^4}
\end{equation}
for all $\mathbf{z} \in \mathbb{Z}^d_n \setminus \{ \mathbf 0 \}$, for all $\mathbf y \in \mathbb{R}^d$ and for all $n \in \mathbb{N}^*$. Combining equations \eqref{e-cl1} and \eqref{e-cl2} gives \eqref{compGH}.
\end{proof}
By \eqref{e.four-form}, \eqref{e.feqintg} along with integration of  \eqref{compGH} over the variable $\mathbf y$, there exists $c_1,C_1>0$ such that
\begin{equation} \label{compFeta}
 c_1 d^2 \int_{\mathbb R^d }H_{n,d,\mathbf x}(\mathbf y) \, d\mathbf y \le \mathbb{E} (\eta_\mathbf 0 -\eta_\mathbf x)^2 \le C_1  d^2 \int_{\mathbb R^d} H_{n,d,\mathbf x}  (\mathbf y) \, d \mathbf y
\end{equation}
for all $n \in \mathbb{N}^*$ and for all $\mathbf x \in \mathbb{Z}^d_n \setminus \{ \mathbf 0 \}$.

By \eqref{compFeta}, it suffices to estimate $\int_{\mathbb{R}^d}H_{n,d,\mathbf{x}} (\mathbf y) \,d\mathbf y$. Observe that the support of $H_{n,d,\mathbf{x}}$ satisfies
\[
\operatorname{Support}( H_{n,d,\mathbf{x}}) \subseteq B_2(\mathbf 0 , \sqrt d) \setminus B_2( \mathbf 0, 1/(2n))
\]
for all $d,n \in \mathbb{N}^*$ and for all $\mathbf x \in \mathbb{Z}^d_n$,
where $B_2$ denotes open ball with respect to Euclidean norm in $\mathbb{R}^d$.

\subsection{Upper bounds}
Define $\psi_d$  by
 \begin{equation} \label{e.psi}
 \psi_d(n,r):= \begin{cases}
                          n r^2   & \mbox{if } d=1 \\
                          r^2 \log \left( \frac{n}{r } \right) & \mbox{if } d=2 \\
                           r & \mbox{if } d=3 \\
                              \log (1+r)  & \mbox{if } d=4  \\
                             1 & \mbox{if } d \ge 5.                         \end{cases}
\end{equation}
for all $n \in \N^*$ and all $r>0$ along with $\psi_d(n ,0):=0$ for all $d,n \in \N^*$.
 The upper bounds for $ \mathbb{E}(\eta_{\mathbf{0}} - \eta_{\mathbf{x}})^2$ is summarized in the following Proposition.
\begin{proposition}\label{p.r1ub}
 For each $d \in \N^*$, there exists $C_d >0$ such that
 \begin{equation} \label{e.r1ub}
  \mathbb{E}(\eta_{\mathbf{0}} - \eta_{\mathbf{x}})^2 \le C_d \psi_d(n, \norm{\mathbf x}_2)
 \end{equation}
for all $n \in \N^*$ and for all $\mathbf x \in \mathbb{Z}_n^d$ , where $\psi_d$ is defined by \eqref{e.psi}.
\end{proposition}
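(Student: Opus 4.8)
The plan is to start from the reduction \eqref{compFeta}, which already sandwiches $\mathbb{E}(\eta_\mathbf{0}-\eta_\mathbf{x})^2$ between constant multiples of $\int_{\mathbb{R}^d} H_{n,d,\mathbf{x}}(\mathbf{y})\,d\mathbf{y}$, so everything reduces to an upper bound for this integral. Write $r=\norm{\mathbf{x}}_2$. For $\mathbf{x}=\mathbf{0}$ both sides of \eqref{e.r1ub} vanish, so assume $\mathbf{x}\neq\mathbf{0}$; then some integer coordinate of $\mathbf{x}$ has absolute value $\geq1$, so $r\geq 1$, while the identification of Remark~\ref{r-notation} gives $\norm{\mathbf{x}}_\infty\leq n/2$, hence $r\leq \sqrt{d}\,n/2$. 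These two bounds on $r$ are what make the case analysis go through.

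Next I would bound the numerator in $H_{n,d,\mathbf{x}}$ pointwise. Using $\sin^2 t\leq\min(1,t^2)$, the Cauchy--Schwarz bound $\abs{\mathbf{x}\cdot\mathbf{z}}\leq r\norm{\mathbf{z}}_2$, and then \eqref{e.triangle} to replace $\norm{\mathbf{z}/n}_2$ by $\norm{\mathbf{y}}_2$ on the cube $B_\infty(\mathbf{z}/n,1/(2n))$, one gets $\sin^2(\frac{\pi\mathbf{x}\cdot\mathbf{z}}{n})\,I_{\mathbf{z},n}(\mathbf{y})\leq \min(1,Cr^2\norm{\mathbf{y}}_2^2)\,I_{\mathbf{z},n}(\mathbf{y})$ for a constant $C=C(d)$. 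Since the cubes $\{B_\infty(\mathbf{z}/n,1/(2n))\}_\mathbf{z}$ are disjoint, at most one term in the sum defining $H_{n,d,\mathbf{x}}(\mathbf{y})$ is nonzero, and recalling that the support of $H_{n,d,\mathbf{x}}$ lies in $B_2(\mathbf{0},\sqrt d)\setminus B_2(\mathbf{0},1/(2n))$, we obtain
\[
H_{n,d,\mathbf{x}}(\mathbf{y})\;\leq\;\frac{\min\!\big(1,\,Cr^2\norm{\mathbf{y}}_2^2\big)}{\norm{\mathbf{y}}_2^4}\,\mathbf{1}_{\{1/(2n)\,\leq\,\norm{\mathbf{y}}_2\,\leq\,\sqrt d\}}.
\]
Passing to polar coordinates (the case $d=1$ being a direct computation) then yields, for a constant $C'=C'(d)$,
\[
\int_{\mathbb{R}^d} H_{n,d,\mathbf{x}}(\mathbf{y})\,d\mathbf{y}\;\leq\;C'\int_{1/(2n)}^{\sqrt d}\min\!\big(1,\,Cr^2\rho^2\big)\,\rho^{\,d-5}\,d\rho .
\]

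Finally I would evaluate this one-dimensional integral. For $d\geq 5$ the integrand is bounded by $\rho^{d-5}$, which is integrable near $0$, so the integral is at most $\int_0^{\sqrt d}\rho^{d-5}\,d\rho=C_d<\infty$, matching $\psi_d\equiv 1$. For $d\in\{1,2,3,4\}$ the inequalities $1\leq r\leq\sqrt d\,n/2$ give $1/r\leq n\leq 2n$... more precisely $1/(2n)<1/r<\sqrt d$, so I would split the integral at $\rho_0:=1/r$: on $[1/(2n),\rho_0]$ use $\min(1,Cr^2\rho^2)\leq Cr^2\rho^2$, and on $[\rho_0,\sqrt d]$ use $\min(1,Cr^2\rho^2)\leq 1$. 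Each piece is then an elementary power integral (with a logarithm appearing on the first piece when $d=2$ and on the second piece when $d=4$), and a short computation identifies the sum of the two contributions with $\psi_d(n,r)$ of \eqref{e.psi}, up to a $d$-dependent constant which gets absorbed into $C_d$.

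I do not expect a genuine obstacle; the only point requiring care is the regime where $r$ is comparable to $n$ (equivalently $\rho_0$ close to the inner radius $1/(2n)$): one must check that the ``first piece'' stays bounded and, in dimensions $2$ and $4$, that the logarithms $\log(n/r)$ and $\log(1+r)$ in $\psi_d$ do not degenerate. Both follow from $1\leq r\leq\sqrt d\,n/2$, which keeps $n/r$ bounded below by a positive constant and $1+r\geq 2$; together with the observation $r\leq n$ for $d\leq 4$ (needed so that $\rho_0=1/r$ indeed exceeds $1/(2n)$). Tracking all constants is routine bookkeeping.
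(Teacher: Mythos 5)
Your proposal is correct and follows essentially the same route as the paper: both reduce via \eqref{compFeta} to bounding $\int_{\mathbb{R}^d} H_{n,d,\mathbf{x}}$, use $\abs{\sin t}\le\abs{t}$ with Cauchy--Schwarz on the low-frequency annulus and $\abs{\sin t}\le 1$ on the high-frequency annulus (your $\min(1,Cr^2\rho^2)$ bound with split radius $1/r$ is the paper's split at $\sqrt{d}/\norm{\mathbf{x}}_2$ in disguise), and finish with the same elementary radial power integrals. The boundary checks you flag ($1\le r\le\sqrt{d}\,n/2$, non-degeneracy of the logarithms) are exactly the points the paper also notes, so no gap.
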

\begin{proof}
 By \eqref{compFeta} it suffices to find upper bounds for $\int_{\mathbb{R}^d} H_{n,d,\mathbf x}(\mathbf y) \, d \mathbf y$.
The strategy to establish upper bounds for $\int_{\mathbb{R}^d} H_{n,d,\mathbf x}(\mathbf y) \, d \mathbf y$ is to split it into two integrals as
$ \int_{\mathbb{R}^d} =   \int_{ 1/(2n) \le \norm{\mathbf y}_2 \le \sqrt{d}/ \norm{\mathbf x}_2}  +\int_{ \sqrt{d}/\norm{\mathbf x}_2 < \norm{\mathbf y}_2 \le \sqrt{d} }$.
Note that both the integrals are over non-empty annuli since $1/(2n) \le \sqrt{d}/(4\norm{\mathbf x}_2) \le  \sqrt{d}/\norm{\mathbf x}_2 \le \sqrt{d}$ for all $\mathbf{x} \in \Z^d_n \setminus \{ \zero \}$ with the identification from
Remark~\ref{r-notation}.
Using Cauchy-Schwarz inequality and the bound $\abs{\sin t } \le t$, we have $\abs{\sin (\pi \mathbf{x}. \mathbf{z/n})} \le \pi \norm{\mathbf x}_2 \norm{\mathbf z/n}_2 $.
This bound competes with the trivial bound $\abs{\sin (\pi \mathbf{x}. \mathbf{z/n}) } \le 1$. It will become clear that up to constants, the first bound is better for the first term
and the trivial bound  $\abs{\sin (\pi \mathbf{x}. \mathbf{z/n}) } \le 1$ is better for the second term.

For the first integral we use the bound $\abs{\sin t} \le \abs{t}$ and Cauchy-Schwarz inequality  to obtain
\[
H_{n,d,\mathbf x}(\mathbf y)  \le \sum_{\mathbf z \in \mathbb{Z}^d_n \setminus \{ \mathbf 0 \} }  \frac{\pi^2 \norm{\mathbf{x}}_2^2 \norm{ \mathbf{z}/n}_2^2  } { \norm{\mathbf y}_2^4 } I_{\mathbf z, n}(\mathbf y).
\]
By \eqref{e.triangle}, we have $\norm{\mathbf z/n}_2^2 I_{\mathbf z , n} (\mathbf y) \le (1 +\sqrt{d})^2 \norm{\mathbf y}_2^2 I_{\mathbf z, n}(\mathbf y)$. Therefore, we obtain
\begin{equation} \label{1d-ub}
 H_{n,d,\mathbf x}(\mathbf y)  \le \sum_{\mathbf z \in \mathbb{Z}^d_n \setminus \{ \mathbf 0 \} }  \frac{(1 +\sqrt{d})^2  \pi^2 \norm{\mathbf{x}}_2^2 } { \norm{\mathbf y}_2^2 } I_{\mathbf z, n}(\mathbf y).
\end{equation}
for all $n,d \in \mathbb{N}^*$, for all $\mathbf y \in \mathbb{R}^d$ and for all $\mathbf x \in \mathbb{Z}^d_n \setminus \{ \mathbf 0 \}$.
Hence, we have
\begin{equation} \label{1t-ub}
 I_1 := \int_{ 1/(2n) \le \norm{\mathbf y}_2 \le \sqrt{d}/ \norm{\mathbf x}_2} H_{n,d,\mathbf x}(\mathbf y) \, d \mathbf{y} \le (1 +\sqrt{d})^2  \pi^2 \norm{\mathbf x}_2^2 \omega_{d-1} \int_{1/(2n)}^{\sqrt{d}/ \norm{\mathbf x}_2} \frac{r^{d-1}}{r^2} \, dr
\end{equation}
where $\omega_{d-1}=\frac{2 \pi^{d/2} }{\Gamma (d/2)} $ is the  $(d-1)$-dimensional surface measure of unit sphere $\mathbb{S}^{d-1}$ in $\mathbb{R}^d$.

For the second integral we use the bound $\abs{\sin t} \le 1$, to obtain
\begin{equation} \label{2t-ub}
 I_2:=\int_{ \sqrt{d}/\norm{\mathbf x}_2 < \norm{\mathbf y}_2 \le \sqrt{d} } H_{n,d,\mathbf x}(\mathbf y) \, d \mathbf{y} \le \omega_{d-1} \int_{\sqrt{d}/ \norm{\mathbf x}_2}^ {\sqrt{d}} \frac{r^{d-1}}{r^4} \, dr.
\end{equation}
Combining equations \eqref{1t-ub} and \eqref{2t-ub}, we obtain
\begin{equation} \label{234ub}
 \int_{\mathbb{R}^d} H_{n,d,\mathbf x}(\mathbf y) \, d \mathbf y  \le  (1 +\sqrt{d})^2  \pi^2 \norm{\mathbf x}_2^2 \omega_{d-1} \int_{1/(2n)}^{\sqrt{d}/ \norm{\mathbf x}_2} r^{d-3} \, dr+  \omega_{d-1}\int_{\sqrt{d}/ \norm{\mathbf x}_2}^ {\sqrt{d}} r^{d-5} \, dr.
\end{equation}
The desired upper bounds on $\mathbb{E}(\eta_{\mathbf{0}} - \eta_{\mathbf{x}})^2$ for all dimensions follow from  \eqref{234ub} along with \eqref{compFeta}.
\end{proof}

\old{ 
$\mathbf{ d=1}$.
There exists $C_1>0$ such that
\begin{equation*} \label{ub1}
\mathbb{E}(\eta_{\mathbf{0}} - \eta_{\mathbf{x}})^2 \le C_1 n \norm{\mathbf x}_2^2   .
\end{equation*}
for all $n \in \mathbb{N}^*$ and for all $x \in \mathbb{Z}_n$. \\
$\mathbf{ d=2}$.\\
There exists $C_2>0$ such that
\begin{equation*} \label{ub2}
  \mathbb{E}(\eta_{\mathbf{0}} - \eta_{\mathbf{x}})^2 \le C_2  \norm{\mathbf x}_2^2 \log \left( \frac{n}{\norm{\mathbf x}_2 }\right)
\end{equation*}
for all  $n \in \mathbb{N}^*$ and for all $\mathbf x \in \mathbb{Z}_n^2$. \\
$\mathbf{ d=3}$.\\
There exists $C_3>0$ such that
\begin{equation*} \label{ub3}
 \mathbb{E}(\eta_{\mathbf{0}} - \eta_{\mathbf{x}})^2 \le C_3 \norm{\mathbf x}_2
\end{equation*}
for all  $n \in \mathbb{N}^*$ and for all $\mathbf x \in \mathbb{Z}_n^3$. \\
$\mathbf{ d=4}$.\\
There exists $C_4>0$ such that
\begin{equation*} \label{ub4}
  \mathbb{E}(\eta_{\mathbf{0}} - \eta_{\mathbf{x}})^2 \le C_4 \left( 1+ \log \norm{\mathbf x}_2 \right)
\end{equation*}
for all  $n \in \mathbb{N}^*$ and for all $\mathbf x \in \mathbb{Z}_n^4$. \\
$\mathbf{ d \ge 5}$.\\
There exists $C_5>0$ such that
\begin{equation*} \label{ub5}
 \mathbb{E}(\eta_{\mathbf{0}} - \eta_{\mathbf{x}})^2    \le  C_5 \omega_{d-1} d^{d/2}.
\end{equation*}
for all $d \in \mathbb{N}^*$ with $d \ge 5$ , for all $n \in \mathbb{N}^*$ and for all $\mathbf x \in \mathbb{Z}_n^d$
}
\begin{rem}
 The terms $I_1$ and $I_2$ correspond to the energy (square of 2-norm) of the low and high frequency oscillations of the function $g_\mathbf 0 - g_\mathbf x$ respectively.
 For $d=1,2$, the term $I_1$ dominates $I_2$.
 For $d=3$, both $I_1$ and $I_2$ are of the same order. For $d \ge 4$, the term $I_2$ dominates $I_1$.
Hence our approach to obtain matching lower bounds in the next subsection is as follows:  For $d=1,2$, we obtain lower bounds on lower frequency terms and for $d \ge 3$ we obtain lower bounds on higher frequency terms.
\end{rem}
It is well know that the  Gaussian field $\eta$  induces a Hilbert space on $\Z^d_n$ given by the distance metric
\[
 d_\eta(\mathbf{x},\mathbf{y}) := \left( \EE (\eta_\mathbf{x} - \eta_\mathbf{y})^2 \right)^{1/2}.
\]
The upper bounds on $d_\eta$ provided by Proposition \ref{p.r1ub} transfers to upper bounds on $\EE \sup_{\mathbf x \in \Z^d_n} \eta_{\mathbf x}$. The main tool to transfer bounds is
Dudley's bound \cite[Proposition 1.2.1]{Tal} described below. There exists $L>0$ such that
\begin{equation} \label{e.dud}
 \EE \sup_{\mathbf x \in \Z^d_n} \le L \sum_{k=0}^\infty 2^{k/2} e_k
\end{equation}
where $e_k = \inf \sup_{\mathbf t \in \Z^d_n} d_\eta(\mathbf t,T_k)$ and the infimum is taken over all subsets $T_k \subseteq \Z^d_n$ with $\abs{T_k} \le \ttk$.

\begin{proposition}\label{p.r2ub}
 For each $d \in \N^*$, there exists $C_d >0$ such that
 \begin{equation} \label{e.r2ub}
\EE \sup_{\mathbf x \in \Z^d_n} \eta_{\mathbf x} \le C_d \phi_d(n)
 \end{equation}
for all $n \in \N^*$, where $\phi_d$ is defined by \eqref{e.phi}.
\end{proposition}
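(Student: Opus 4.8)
The plan is to feed the pointwise variance bounds of Proposition~\ref{p.r1ub} into Dudley's entropy bound \eqref{e.dud} and to evaluate the resulting chaining sum separately in each dimension.

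\emph{Metric estimate.} On $\Z^d_n$ the degree is the constant $2d$ and $g(\mathbf z,\mathbf x)$ depends only on $\mathbf z-\mathbf x$ modulo $n$, so the canonical metric $d_\eta(\mathbf x,\mathbf y):=(\EE(\eta_{\mathbf x}-\eta_{\mathbf y})^2)^{1/2}$ is translation invariant; applying Proposition~\ref{p.r1ub} to the representative of $\mathbf x-\mathbf y$ in $(\Z\cap(-n/2,n/2])^d$ gives $d_\eta(\mathbf x,\mathbf y)^2\le C_d\,\psi_d(n,\norm{\mathbf x-\mathbf y}_2)$, where $\norm{\cdot}_2$ is the $\ell_2$ distance on the torus (Remark~\ref{r-notation}). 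On the range $[0,\sqrt d\,n/2]$ of distances realized in $\Z^d_n$, the function $r\mapsto\psi_d(n,r)$ is nonnegative and, up to a $d$-dependent constant, nondecreasing; hence a set $T\subseteq\Z^d_n$ with Euclidean covering radius $\rho\le\sqrt d\,n/2$ satisfies $\sup_{\mathbf t}d_\eta(\mathbf t,T)\le(C_d\,\psi_d(n,\rho))^{1/2}$.

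\emph{Dyadic nets.} For each $k\ge0$ I take $T_k$ to be a dyadic grid in $\Z^d_n$ of mesh $\ell_k:=\max\{1,\,\lceil n/\lfloor 2^{2^k/d}\rfloor\rceil\}$. This choice keeps $\abs{T_k}\le(\lfloor 2^{2^k/d}\rfloor)^d\le\ttk$, while the Euclidean covering radius satisfies $\rho_k\le\sqrt d\,\min(\ell_k,n)/2\le\sqrt d\,n/2$ and $\rho_k\asymp\max\{1,\,n\,2^{-2^k/d}\}$; these are routine verifications. Once $2^k\ge d\log_2 n$ the mesh equals $1$, so $T_k=\Z^d_n$ and $e_k=0$; thus only the scales $0\le k\le K_n:=\lceil\log_2(d\log_2 n)\rceil$ contribute, and for these $e_k\le(C_d\,\psi_d(n,\rho_k))^{1/2}$ by the previous paragraph.

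\emph{Summing the chaining series.} It remains to bound $\sum_{k=0}^{K_n}2^{k/2}e_k$ in each dimension, substituting $\rho_k\asymp\max\{1,n2^{-2^k/d}\}$ (so $n/\rho_k\asymp 2^{2^k/d}$ when $\rho_k>1$). For $d=1$, $2^{k/2}e_k\lesssim n^{3/2}2^{k/2-2^k}$; for $d=2$, $\psi_2(n,\rho_k)=\rho_k^2\log(n/\rho_k)\asymp n^22^{-2^k}2^k$ so $2^{k/2}e_k\lesssim n\,2^k2^{-2^{k-1}}$; for $d=3$, $2^{k/2}e_k\lesssim n^{1/2}2^{k/2-2^k/6}$. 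In each of these three cases the $k$-dependent factor is summable over $k\ge0$, so the chaining sum is $O(n^{3/2})$, $O(n)$, $O(n^{1/2})$ respectively, i.e.\ $O(\phi_d(n))$. For $d=4$ only the crude bound $e_k\le(C_4\log(1+\rho_k))^{1/2}\lesssim(\log n)^{1/2}$ is available (since $\rho_k\lesssim n$ and $n\ge2$), so
\[
\sum_{k=0}^{K_n}2^{k/2}e_k\;\lesssim\;(\log n)^{1/2}\sum_{k=0}^{K_n}2^{k/2}\;\asymp\;(\log n)^{1/2}\cdot 2^{K_n/2}\;\asymp\;\log n,
\]
using $2^{K_n}\asymp\log n$; and for $d\ge5$, $e_k\le C_d^{1/2}$, so the same geometric sum gives $\sum_{k=0}^{K_n}2^{k/2}e_k\lesssim 2^{K_n/2}\asymp(\log n)^{1/2}$. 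In every case Dudley's bound \eqref{e.dud} yields $\EE\sup_{\mathbf x\in\Z^d_n}\eta_{\mathbf x}\le C_d\phi_d(n)$, which is \eqref{e.r2ub}.

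\emph{Main obstacle.} The first two paragraphs are routine, the one delicate point being the monotonicity of $\psi_2(n,\cdot)$ near its right endpoint $r\asymp n$, handled by treating the $O(1)$ scales with $\ell_k\asymp n$ separately and bounding $e_k\lesssim(C_d\psi_d(n,\sqrt d\,n/2))^{1/2}$ by brute force. The substantive part is the dimensional case analysis of the third paragraph and the dichotomy it exposes: for $d\le3$ the coarsest scale dominates and the chaining series converges super-geometrically, reproducing the pure power rates, whereas for $d\ge4$ it is the $\asymp\log\log n$ finest active scales that carry the mass, and the geometric factor $\sum_{k\le K_n}2^{k/2}\asymp 2^{K_n/2}\asymp(\log n)^{1/2}$ is precisely what turns the uniform per-scale bound $(\log n)^{1/2}$ into the rate $\log n$ at $d=4$ (and $O(1)$ into $(\log n)^{1/2}$ for $d\ge5$).
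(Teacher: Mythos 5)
Your proposal is correct and follows essentially the same route as the paper: Dudley's entropy bound \eqref{e.dud} applied to submesh nets of cardinality at most $\ttk$ (the paper's \eqref{e.sep}, your dyadic grids), with the metric controlled by Proposition~\ref{p.r1ub}, leading to the chaining sum $\sum_k 2^{k/2}[\psi_d(n,C n 2^{-2^k/d})]^{1/2}$. The only difference is that you carry out explicitly the dimension-by-dimension summation (including the $d=2$ endpoint monotonicity issue and the $2^{K_n/2}\asymp(\log n)^{1/2}$ bookkeeping for $d\ge 4$) which the paper dismisses as a ``straightforward case by case calculation.''
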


\begin{proof}
Let $d_G$ denote the standard graph distance on the torus $\Z^d_n$.
By choosing a submesh of appropriate cardinality the following statement is clear:
For any $d \in \N^*$, there exist $C_{d,1} >0$ such that for any $n \ge 2$ and for any $ 2 \le m < n^d$, there exists a set $S_m \subset \Z^d_n$ with $\abs{S_m}=m$ such that
\begin{equation} \label{e.sep}
  \sup_{\mathbf t \in \Z_n^d} d_G(t,S_m)= \sup_{\mathbf t \in \Z_n^d} \inf_{\mathbf s \in S_m} d_G(\mathbf t,\mathbf s) \le C_{d,1} \frac{n}{m^{1/d}}.
\end{equation}
For each $d \in \N^*$ by Dudley's bound \eqref{e.dud}, \eqref{e.sep} and Proposition \ref{p.r1ub}, there exists $C_{d,2}, C_{d,3} >0$
\begin{equation} \label{e.ubm}
 \EE \max_{\mathbf{x} \in \Z^d_n} \eta_x \le  C_{d,2}  \sum_{k=0}^{\lfloor \log \log n^d \rfloor}  2^{k/2} \left[ \psi_d\left(n, \frac{C_{d,1} n}{2^{(2^k/d)}}\right) \right]^{1/2} \le C_{d,3} \phi_d(n)
\end{equation}
The second inequality above follows from a straightforward case by case calculation.
\end{proof}

\subsection{Lower bounds}
Next, we  prove matching lower bounds on $\EE (\eta_{\mathbf{0}} - \eta_{\mathbf{x}} )^2$.
For dimensions $d=1,2$, we  estimate $F_{n,d}$ directly. \\
\paragraph{$\mathbf{ d=1}$:}
We use the bound $\abs{t} \ge \abs{ \sin t } \ge \frac{2}{\pi} \abs{t}$ for all $\abs{t} \le \pi/2$ to obtain
\[
 F_{n,1} (\mathbf x) \ge n^{-1} \frac{\sin^2 \frac{\pi \mathbf x}{n}}{\sin^4 \frac{\pi}{n}} \ge 4 \pi^{-4} n\norm{ \mathbf x}_2^2
\]
for all $n \in \mathbb{N}^*$ and for all $\mathbf x \in \mathbb{Z}_n \setminus \{ \mathbf 0 \}$. Hence there exists
 $c_1>0$ such that
\begin{equation} \label{lb1}
 \mathbb{E}(\eta_{\mathbf{0}} - \eta_{\mathbf{x}})^2 \ge c_1 n \norm{\mathbf x}_2^2
\end{equation}
for all $n \in \mathbb{N}^*$ and for all $\mathbf x \in \mathbb{Z}_n \setminus \{ \mathbf 0 \}$. \\

\paragraph{$\mathbf{ d=2}$:}
Let $S_{k} \subset \mathbb{Z}^2$ denote the sphere with center $\mathbf 0$ and radius $k$ in the supremum norm, that is $S_{k} = \{ \mathbf y \in \mathbb{Z}^2:\norm{\mathbf y}_\infty =k \}$.
For $x \in \mathbb{R}^2$, we define $H_{\mathbf x}= \{ \mathbf y \in \mathbb{Z}^2: \abs{\mathbf x \cdot \mathbf y} \ge \norm{\mathbf x}_2 \norm{\mathbf y}_2/\sqrt{2} \}$.
It is easy to check that $\abs{S_k} =4k$ and $\abs{S_k \cap H_\mathbf x }\ge 2k$ for all $k \in \mathbb{N}^*$ and for all $\mathbf x \in \mathbb{R}^2$. Let $\alpha \in (1, \sqrt{2})$.
If $\norm{\mathbf z}_2 \le \frac{n}{\alpha \norm{\mathbf x}_2} $ by Cauchy-Schwarz inequality we have $\abs{\mathbf x \cdot \mathbf z/n} \le \alpha^{-1}$.
We need the inequality $\pi \abs{t} \ge \abs{\sin \pi t} \ge \beta \abs{t}$ for all $\abs{t} \le \alpha ^{-1}$ where $\beta= \alpha \sin (\pi \alpha^{-1})$. Putting together the above pieces, we obtain
\begin{eqnarray*}
 F_{n,2}(\mathbf x) &\ge&  \sum_{k=1}^{\left\lfloor \frac{n}{\alpha \norm{\mathbf x}_2} \right\rfloor \wedge \lfloor n/4 \rfloor}   \sum_{\mathbf z \in S_k } \beta^2 \pi^{-4} \norm{\mathbf z}_2^{-4} \abs{\mathbf x \cdot \mathbf z}^2 \\
 & \ge &  \frac{\beta^2}{2 \pi^4} \norm{\mathbf x}_2^2 \sum_{k=1}^{\left\lfloor \frac{n}{\alpha \norm{\mathbf x}_2} \right\rfloor \wedge \lfloor n/4 \rfloor}   \sum_{\mathbf z \in S_k \cap H_\mathbf x}\norm{\mathbf z}_2^{-2}  \\
 & \ge &  \frac{\beta^2}{4 \pi^4} \norm{\mathbf x}_2^2 \sum_{k=1}^{\left\lfloor \frac{n}{\alpha \norm{\mathbf x}_2} \right\rfloor\wedge \lfloor n/4 \rfloor}   \sum_{\mathbf z \in S_k \cap H_\mathbf x} k^{-2}  \\
 & \ge &  \frac{\beta^2}{2 \pi^4} \norm{\mathbf x}_2^2 \sum_{k=1}^{\left\lfloor \frac{n}{\alpha \norm{\mathbf x}_2} \right\rfloor\wedge \lfloor n/4 \rfloor}    k^{-1}  \\
 & \ge &  \frac{\beta^2}{2 \pi^4} \norm{\mathbf x}_2^2\log \left( \left( \left\lfloor \frac{n}{\alpha \norm{\mathbf x}_2} \right\rfloor \wedge \lfloor n/4 \rfloor \right)+1 \right)  \\
 & \ge &  \frac{\beta^2}{2 \pi^4} \norm{\mathbf x}_2^2\log \left(  \frac{n}{\alpha \norm{\mathbf x}_2} \wedge  \frac{n}{4} \right).
\end{eqnarray*}
Since $\norm{\mathbf x}_2 \le n/\sqrt{2}$ for all $\mathbf x \in \mathbb{Z}^2_n$, we have the desired lower bound by using $\alpha < \sqrt{2}$. That is, there exists
 $c_2>0$ such that
\begin{equation} \label{lb2}
 \mathbb{E}(\eta_{\mathbf{0}} - \eta_{\mathbf{x}})^2 \ge c_2  \norm{\mathbf x}_2^2\log \left( \ \frac{n}{ \norm{\mathbf x}_2}  \right)
\end{equation}
for all $n \in \mathbb{N}^*$ with $n \ge 4$ and for all $\mathbf x \in \mathbb{Z}^2_n \setminus \{ \mathbf 0 \}$.
\\

\paragraph{$\mathbf{d \geq 3}$:} For dimensions $d \ge 3$, we will approximate $H_{n,d,\mathbf x}$ by its almost everywhere point-wise limit $H_{\infty,d,\mathbf x}$ defined by
\[
 H_{\infty,d,\mathbf x} (\mathbf y) = \frac{ \sin^2( \pi \mathbf x \cdot \mathbf y) }{ \norm{\mathbf y}_2^4} \mathbf 1_{B_\infty( \mathbf 0,1/2) }(\mathbf y).
\]
for $ y \neq  \mathbf 0$ and $0$ otherwise.
Therefore we would like to estimate integrals of the form
\begin{equation} \label{e-hd1}
 \int_{r_1 \le \norm{\mathbf y}_2 \le r_2}  \frac{ \sin^2( \pi \mathbf x \cdot \mathbf y) }{ \norm{\mathbf y}_2^4}  \, d\mathbf y= \int_{r_1}^{r_2} r^{d-5} s_d(r \norm{\mathbf x}_2 )  \, d r
\end{equation}
where $s_d(t) = \int_{\mathbb{S}^{d-1}} \sin^2(\pi t y_1 ) \, \nu_{d-1}(d\mathbf y)$ and $\nu_{d-1}$ denotes the surface measure in $\mathbb{S}^{d-1}$. We will need the following lower bound for $s_d$.
\begin{lem}\label{lem-sine} Fix $ d\in \mathbb{N}^*$ with $d \ge 3$. Then
 for all $\epsilon >0$, there exists $\delta>0$ such that $s_d(t) \ge \delta$ for all $t \ge \epsilon$.
\end{lem}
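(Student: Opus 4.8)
The plan is to deduce the uniform lower bound from three softer facts: that $s_d$ is continuous on $(0,\infty)$, that $s_d(t)>0$ for every $t>0$, and that $s_d(t)$ converges to a strictly positive limit as $t\to\infty$. Granting these, the lemma follows by a compactness argument: given $\epsilon>0$, choose $T\ge \epsilon$ beyond which $s_d$ stays above half its limiting value; on the compact interval $[\epsilon,T]$ the continuous positive function $s_d$ attains a strictly positive minimum; and the minimum of these two bounds is the desired $\delta$.

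For the continuity and positivity, first rewrite, using $\sin^2 a = \tfrac12(1-\cos 2a)$,
\[
s_d(t) = \frac{\omega_{d-1}}{2} - \frac{1}{2}\int_{\mathbb{S}^{d-1}} \cos(2\pi t y_1)\,\nu_{d-1}(d\mathbf y),
\]
where $\omega_{d-1}=\nu_{d-1}(\mathbb{S}^{d-1})$. Since the integrand defining $s_d$ is bounded by $1$ and $\nu_{d-1}$ is a finite measure, dominated convergence gives that $t\mapsto s_d(t)$ is continuous on $\mathbb{R}$. For positivity, note that for fixed $t>0$ the set $\{\mathbf y\in\mathbb{S}^{d-1}:\sin^2(\pi t y_1)=0\}=\{\mathbf y:y_1\in t^{-1}\mathbb{Z}\}$ is a finite union of slices $\{y_1=u\}\cap\mathbb{S}^{d-1}$, each of which is a sphere of dimension $d-2$ and hence $\nu_{d-1}$-null; since $\sin^2(\pi t y_1)>0$ off this null set, $s_d(t)>0$.

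For the limit at infinity, I would use the coarea formula for the sphere: the pushforward of $\nu_{d-1}$ under $\mathbf y\mapsto y_1$ is absolutely continuous with density $u\mapsto c_d(1-u^2)^{(d-3)/2}\mathbf{1}_{[-1,1]}(u)$ for a dimensional constant $c_d$, and for $d\ge 3$ this density is bounded, in particular in $L^1(\mathbb{R})$. The Riemann--Lebesgue lemma then yields
\[
\int_{\mathbb{S}^{d-1}} \cos(2\pi t y_1)\,\nu_{d-1}(d\mathbf y) = c_d\int_{-1}^{1}\cos(2\pi t u)(1-u^2)^{(d-3)/2}\,du \longrightarrow 0
\]
as $t\to\infty$, hence $s_d(t)\to \omega_{d-1}/2>0$. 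Combining this with the compactness step above completes the proof: take $T\ge\epsilon$ with $s_d(t)\ge\omega_{d-1}/4$ for $t\ge T$, let $\delta_1=\min_{[\epsilon,T]} s_d>0$, and set $\delta=\min(\delta_1,\omega_{d-1}/4)$.

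The only mildly delicate point is the decay at infinity; everything else is routine. This is exactly Riemann--Lebesgue, so the content is just recording that the one-coordinate marginal of the uniform measure on $\mathbb{S}^{d-1}$ is absolutely continuous (and, for $d\ge 3$, has bounded density) — which is also where the hypothesis $d\ge 3$ enters, although an $L^1$ density suffices and the same statement in fact holds for $d=2$.
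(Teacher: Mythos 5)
Your proof is correct and takes essentially the same route as the paper: both arguments reduce to continuity and positivity of $s_d$ away from $0$ plus a uniform bound at infinity obtained from the one-coordinate marginal of the uniform measure on $\mathbb{S}^{d-1}$, with density proportional to $(1-u^2)^{(d-3)/2}$ (the fact the paper cites from Barthe--Gu\'edon--Mendelson--Naor). The only difference is in the final step, and it is cosmetic: you obtain the exact limit $s_d(t)\to\omega_{d-1}/2$ via Riemann--Lebesgue, whereas the paper lower-bounds the density on $[-1/2,1/2]$ and uses the elementary limit $\int_{-1/2}^{1/2}\sin^2(\pi t w)\,dw\to 1/2$.
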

\begin{proof}
 Since $s_d:\mathbb{R} \to \mathbb{R}$ is a continuous function with $s_d(t) > 0$ for all $t \neq 0$, it suffices to show that $\liminf_{t \to \infty} s_d(t) >0$.
 By \cite[Corollary 4]{BGMN} (See Remark \ref{cor-4}(b)),
  \[
   s_d(t) = c_d \int_{-1}^1 (1-x^2)^{(d-3)/2}  \sin^2 (\pi t w)   \, dw \ge c_d 2^{(3-d)/2} \int_{-1/2}^{1/2} \sin^2 (\pi t w) \, dw
  \]
where $c_d$ is a constant that depends on $d$.
Since $\lim_{t \to \infty }  \int_{-1/2}^{1/2} \sin^2 (\pi t w) \, dw = 1/2$, the conclusion follows.
\end{proof}
\begin{rem} \label{cor-4}
\begin{itemize}
 \item[(a)]   Recall that we used the point-wise bound $\abs{\sin t} \le 1$ to obtain upper bounds on $I_2$. We want to somehow reverse that inequality  to obtain corresponding lower bounds.  Although the reverse inequality $\abs{\sin t} > \delta$ is not true for any $\delta>0$ in a pointwise sense,
  it is true  in an average sense. That is the content of Lemma \ref{lem-sine}.
\item[(b)] \cite[Corollary 4]{BGMN} implies the following striking result in geometric probability: Let $d \ge 3$. For a uniformly distributed random vector $\mathbf y=(y_1,y_2,\ldots,y_d)$ in the $(d-1)$-dimensional unit sphere $\mathbb{S}^{d-1}$ in $\mathbb{R}^d$, the projection
 $(y_1,y_2, \ldots, y_{d-2})$ is uniformly distributed in the $(d-2)$-dimensional unit ball $\mathbb{B}^{d-2}= B_2(\mathbf 0 , 1)$ in $\mathbb{R}^{d-2}$.
\item[(c)]  Since $\lim_{n \to \infty} H_{n,d,\mathbf x}= H_{\infty , d, \mathbf x}$ almost everywhere, one might wonder if we can prove matching lower bounds for $I_2$ using dominated convergence theorem.  This approach gives a lower
 bound as $n$ goes to $\infty$ but with both $d$ and $\mathbf x$ fixed. However we want lower bounds with fixed $d$ and with both $n$ and $\mathbf x$ varying. Hence there is a need to quantify this convergence as both $n$ and $\mathbf x$ varies.
 We fulfill this need in Lemma \ref{lem-conv}.
\end{itemize}
\end{rem}

One can easily check that $\lim_{n \to \infty} H_{n,d,\mathbf x}= H_{\infty , d, \mathbf x}$ almost everywhere. We need the following quantitative version of this convergence.
\begin{lem}\label{lem-conv}
 Fix $d \in \mathbb{N}^*$.
  For any $\epsilon >0$, there exists positive reals $\delta, N $ such that
\[
 \abs{H_{n,d,\mathbf{x}}(\mathbf y) - \frac{ \sin^2( \pi \mathbf x \cdot \mathbf y) }{ \norm{\mathbf y}_2^4}}  \le \frac{\epsilon}{\norm{ \mathbf y}_2^4}
\]
for all $n \ge N$, for all $\mathbf x \in \mathbb{Z}^d_n \setminus \{ \mathbf 0 \}$  with $\norm{\mathbf x}_2< \delta n$ and for almost every $y \in B_2(\mathbf 0,1/4) \setminus B_2 (\mathbf 0, 1/(8 \norm{\mathbf x}_2))$.
\end{lem}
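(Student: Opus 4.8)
\emph{Proof strategy for Lemma~\ref{lem-conv}.}
The plan is to use that, for a fixed $\mathbf y$, the sum defining $H_{n,d,\mathbf x}(\mathbf y)$ has at most one nonzero term: the cubes $B_\infty(\mathbf z/n,1/(2n))$, $\mathbf z\in\mathbb{Z}^d_n$ (identified as in Remark~\ref{r-notation} with the fundamental domain $(\mathbb{Z}\cap(-n/2,n/2])^d$), are pairwise disjoint, and for $n$ at least some absolute $N$ (e.g.\ $N=3$) their union covers $B_\infty(\mathbf 0,1/4)\supseteq B_2(\mathbf 0,1/4)$ up to a set of Lebesgue measure zero. Hence for a.e.\ $\mathbf y\in B_2(\mathbf 0,1/4)$ there is a unique $\mathbf z=\mathbf z(\mathbf y,n)\in\mathbb{Z}^d_n$ with $I_{\mathbf z,n}(\mathbf y)=1$, and
\[
H_{n,d,\mathbf x}(\mathbf y)=\norm{\mathbf y}_2^{-4}\,\sin^2\!\Bigl(\tfrac{\pi\mathbf x\cdot\mathbf z}{n}\Bigr)\,\mathbf 1_{\{\mathbf z\neq\mathbf 0\}}.
\]
Multiplying the asserted inequality through by $\norm{\mathbf y}_2^4$, it therefore suffices to prove
\[
\Bigl|\sin^2\!\bigl(\tfrac{\pi\mathbf x\cdot\mathbf z}{n}\bigr)\mathbf 1_{\{\mathbf z\neq\mathbf 0\}}-\sin^2(\pi\mathbf x\cdot\mathbf y)\Bigr|\le\epsilon .
\]

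First I would eliminate the indicator. If $\mathbf z(\mathbf y,n)=\mathbf 0$ then $\norm{\mathbf y}_\infty<1/(2n)$, so $\norm{\mathbf y}_2\le\sqrt d\,\norm{\mathbf y}_\infty<\sqrt d/(2n)$; but the hypotheses $\norm{\mathbf y}_2\ge 1/(8\norm{\mathbf x}_2)$ and $\norm{\mathbf x}_2<\delta n$ give $\norm{\mathbf y}_2>1/(8\delta n)$, which is $\ge\sqrt d/(2n)$ as soon as $\delta\le 1/(4\sqrt d)$ — a contradiction. Thus for such $\delta$ we have $\mathbf z\neq\mathbf 0$ on the annulus in question, and it remains to estimate $|\sin^2(\pi\mathbf x\cdot\mathbf z/n)-\sin^2(\pi\mathbf x\cdot\mathbf y)|$. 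Using the identity $\sin^2 a-\sin^2 b=\sin(a+b)\sin(a-b)$ together with $|\sin|\le 1$ and $|\sin t|\le|t|$, this is at most $\pi\,|\mathbf x\cdot(\mathbf z/n-\mathbf y)|$, and by Cauchy--Schwarz together with $\norm{\mathbf z/n-\mathbf y}_2\le\sqrt d\,\norm{\mathbf z/n-\mathbf y}_\infty<\sqrt d/(2n)$ we get $\pi\,|\mathbf x\cdot(\mathbf z/n-\mathbf y)|<\pi\sqrt d\,\norm{\mathbf x}_2/(2n)<\pi\sqrt d\,\delta/2$. Choosing $\delta=\min\{2\epsilon/(\pi\sqrt d),\,1/(4\sqrt d)\}$ and $N=3$ then finishes the argument.

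Since every estimate above is a one-line bound, there is no real obstacle; the only point needing care is the bookkeeping in the second paragraph that links the lower cutoff $\norm{\mathbf y}_2\ge 1/(8\norm{\mathbf x}_2)$ with the upper restriction $\norm{\mathbf x}_2<\delta n$ so as to keep $\mathbf y$ away from the singular cube $B_\infty(\mathbf 0,1/(2n))$ — this is exactly why the lemma is stated with these coupled hypotheses rather than for a fixed $\mathbf x$. One should also note that ``almost every'' is genuinely needed, to discard the measure-zero set of $\mathbf y$ lying on a cube boundary (where $I_{\mathbf z,n}(\mathbf y)=0$ for every $\mathbf z$) as well as the point $\mathbf y=\mathbf 0$.
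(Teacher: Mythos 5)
Your proof is correct and follows essentially the same route as the paper: you rule out the $\mathbf z=\mathbf 0$ cube on the annulus using exactly the coupling $\norm{\mathbf x}_2<\delta n$ with $\delta\le 1/(4\sqrt d)$ (the paper phrases this as the inclusion $B_\infty(\mathbf 0,1/(2n))\subset B_2(\mathbf 0,1/(8\norm{\mathbf x}_2))$), and then bound the error by a Lipschitz estimate of order $\pi\norm{\mathbf x}_2\sqrt d/n$. The only cosmetic difference is that you use the identity $\sin^2 a-\sin^2 b=\sin(a+b)\sin(a-b)$ where the paper uses the gradient bound $\norm{\nabla\sin^2(\pi\mathbf x\cdot\mathbf y)}_2\le\pi\norm{\mathbf x}_2$; both give the same estimate and the same choice of $\delta$ up to constants.
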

\begin{proof}
Note that, we have the inclusion
\[
B_\infty (\mathbf 0,1/(2n)) \subset B_2( \mathbf 0, 1/(8 \norm{\mathbf x}_2)) \subset B_2(\mathbf 0 , 1/4)
 \]
 for all $n \ge 4 \sqrt{d}$ and for all $\mathbf x \in \mathbb{Z}^d_n \setminus \{ \mathbf 0 \}$ with $\norm{ \mathbf x}_2 \le  n/ 4 \sqrt{d}$.
We use  $\norm{\mathbf x}_2 \ge 1$  and the comparison of norms $\norm{\mathbf w}_2 \le \sqrt{d} \norm{\mathbf w}_\infty$ to prove the above inclusions.
The function $\mathbf y \mapsto \sin^2(\pi \mathbf x \cdot \mathbf y)$ has gradient bounded uniformly in 2-norm by  $\pi \norm{\mathbf x}_2$. Hence we have
\[
\abs{H_{n,d,\mathbf{x}}(\mathbf y) - \frac{ \sin^2( \pi \mathbf x \cdot \mathbf y) }{ \norm{\mathbf y}_2^4}}  \le \pi \norm{\mathbf x}_2 \frac{\sqrt{d}}{n} \norm{\mathbf y}_2^{-4}
\]
 for all $n \ge 4 \sqrt{d}$ and for all $\mathbf x \in \mathbb{Z}^d_n \setminus \{ \mathbf 0 \}$ with $\norm{ \mathbf x}_2 \le  n/ 4 \sqrt{d}$ and for almost every $y \in B_2(\mathbf 0,1/4) \setminus B_2 (\mathbf 0, 1/(8 \norm{\mathbf x}_2))$.
 The choice  $\delta = \min\left( \frac{1}{4 \sqrt{d}}, \frac{\epsilon}{ \pi \sqrt{d}}\right)$ and $N= 4 \sqrt{d}$ satisfies all the requirements.
 \end{proof}
 We put together the above pieces to obtain the following lower bound for $d \ge 3$.
\begin{lem}\label{lem-highd-lb}
Fix $d \ge 3$. There exists positive reals $\delta,N, c_d$ such that
\[
 \int_{\mathbb{R}^d} H_{n,d,\mathbf x} (\mathbf y) \, d\mathbf y \ge c_d  \int_{1/(8 \norm{\mathbf{x}}_2)}^{1/4} r^{d-5} \, dr
\]
for all $n \ge N$ and for all $\mathbf x \in \mathbb{Z}^d_n \setminus \{ \mathbf 0 \}$  with $\norm{\mathbf x}_2< \delta n$.
\end{lem}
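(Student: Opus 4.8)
The plan is to discard everything outside the annulus $A := B_2(\mathbf 0,1/4)\setminus B_2(\mathbf 0,1/(8\norm{\mathbf x}_2))$ — legitimate since $H_{n,d,\mathbf x}\ge 0$ pointwise — and then use the quantitative convergence Lemma~\ref{lem-conv} to replace $H_{n,d,\mathbf x}(\mathbf y)$ on $A$ by its pointwise limit $\sin^2(\pi\mathbf x\cdot\mathbf y)/\norm{\mathbf y}_2^4$ up to an additive error of size $\epsilon/\norm{\mathbf y}_2^4$. The surviving main term is evaluated in polar coordinates, where the angular integral is, by rotational invariance of the surface measure and the definition of $s_d$, exactly $s_d(r\norm{\mathbf x}_2)$; and Lemma~\ref{lem-sine} supplies a uniform positive lower bound for $s_d$ on the relevant range, because $r\ge 1/(8\norm{\mathbf x}_2)$ forces $r\norm{\mathbf x}_2\ge 1/8$ throughout $A$.

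Concretely I would fix the constants in the following order to avoid circularity. First apply Lemma~\ref{lem-sine} with $\epsilon=1/8$ to obtain $\delta_0>0$ such that $s_d(t)\ge\delta_0$ for all $t\ge 1/8$. Then set $\epsilon:=\delta_0/(2\omega_{d-1})$, where $\omega_{d-1}$ is the surface measure of $\mathbb{S}^{d-1}$. Finally apply Lemma~\ref{lem-conv} with this $\epsilon$ to obtain the constants $\delta$ and $N$ of the statement, and put $c_d:=\delta_0/2$. Note that for $\mathbf x\in\mathbb{Z}^d_n\setminus\{\mathbf 0\}$ one has $\norm{\mathbf x}_2\ge 1$, so $1/(8\norm{\mathbf x}_2)\le 1/8<1/4$ and $A$ is a genuine (nonempty) annulus contained in $B_2(\mathbf 0,1/4)\setminus B_2(\mathbf 0,1/(8\norm{\mathbf x}_2))$, on which Lemma~\ref{lem-conv} is in force.

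Then, for $n\ge N$ and $\norm{\mathbf x}_2<\delta n$, I would write
\[
\int_{\mathbb{R}^d} H_{n,d,\mathbf x}(\mathbf y)\,d\mathbf y \;\ge\; \int_A H_{n,d,\mathbf x}(\mathbf y)\,d\mathbf y \;\ge\; \int_A \frac{\sin^2(\pi\mathbf x\cdot\mathbf y)-\epsilon}{\norm{\mathbf y}_2^4}\,d\mathbf y,
\]
and convert to polar coordinates $\mathbf y=r\theta$, $\theta\in\mathbb{S}^{d-1}$, to get
\[
\int_A \frac{\sin^2(\pi\mathbf x\cdot\mathbf y)}{\norm{\mathbf y}_2^4}\,d\mathbf y \;=\; \int_{1/(8\norm{\mathbf x}_2)}^{1/4} r^{d-5}\,s_d(r\norm{\mathbf x}_2)\,dr \;\ge\; \delta_0\int_{1/(8\norm{\mathbf x}_2)}^{1/4} r^{d-5}\,dr,
\]
while the error term contributes $\epsilon\,\omega_{d-1}\int_{1/(8\norm{\mathbf x}_2)}^{1/4} r^{d-5}\,dr=\tfrac{\delta_0}{2}\int_{1/(8\norm{\mathbf x}_2)}^{1/4} r^{d-5}\,dr$. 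Subtracting gives the claimed bound with $c_d=\delta_0/2$.

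I do not anticipate a real obstacle at this stage: the substantive content has already been extracted in Lemma~\ref{lem-sine} (average positivity of $\sin^2$ on spheres) and Lemma~\ref{lem-conv} (uniform control of $H_{n,d,\mathbf x}\to H_{\infty,d,\mathbf x}$ as $n$ and $\mathbf x$ both vary), and what remains is their synthesis. The only points demanding care are the nonnegativity of $H_{n,d,\mathbf x}$ that licenses restricting the integral to $A$, the bookkeeping that keeps $1/(8\norm{\mathbf x}_2)<1/4$ so that $A$ is nonempty, and the order in which the three constants $\delta_0$, $\epsilon$, $(\delta,N)$ are chosen so that the $\epsilon$-error is absorbed into half of the main term.
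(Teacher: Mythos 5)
Your proposal is correct and follows essentially the same route as the paper's proof: restrict the integral to the annulus $1/(8\norm{\mathbf x}_2)\le\norm{\mathbf y}_2\le 1/4$, replace $H_{n,d,\mathbf x}$ by $H_{\infty,d,\mathbf x}$ up to the $\epsilon/\norm{\mathbf y}_2^4$ error of Lemma~\ref{lem-conv}, pass to polar coordinates to produce $s_d(r\norm{\mathbf x}_2)$, and absorb the error into half of the lower bound from Lemma~\ref{lem-sine}. The paper chooses $s_d\ge 2\epsilon_1$ and error $\epsilon_1/\omega_{d-1}$ where you choose $s_d\ge\delta_0$ and error $\delta_0/(2\omega_{d-1})$, which is the same bookkeeping.
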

\begin{proof}
By Lemma \ref{lem-sine}, there exists $\epsilon_1>0$ such that $s_d(t) \ge 2 \epsilon_1$ for all $t \ge 1/8$. By Lemma \ref{lem-conv}, there exists positive reals $\delta, N $ such that
\[
 \abs{H_{n,d,\mathbf{x}}(\mathbf y) - \frac{ \sin^2( \pi \mathbf x \cdot \mathbf y) }{ \norm{\mathbf y}_2^4}}  \le \frac{\epsilon_1 }{\omega_{d-1} \norm{ \mathbf y}_2^4}
\]
for all $n \ge N$, for all $\mathbf x \in \mathbb{Z}^d_n \setminus \{ \mathbf 0 \}$  with $\norm{\mathbf x}_2< \delta n$ and for almost every $y \in B_2(\mathbf 0,1/4) \setminus B_2 (\mathbf 0, 1/(8 \norm{\mathbf x}_2))$.

Combining the above observations, we have for all $n \ge N$ and for all $\mathbf x \in \mathbb{Z}^d_n \setminus \{ \mathbf 0 \}$  with $\norm{\mathbf x}_2< \delta n$
\begin{eqnarray*}
\int_{\mathbb{R}^d}  H_{n,d,\mathbf x} (\mathbf y) \, d\mathbf y & \ge & \int_{1/(8\norm{\mathbf{x})}_2 \le \norm{\mathbf y}_2 \le 1/4}  H_{n,d,\mathbf x} (\mathbf y) \, d\mathbf y \\
& \ge & \int_{1/(8\norm{\mathbf{x})}_2 \le \norm{\mathbf y}_2 \le 1/4} ( H_{\infty,d,\mathbf x} (\mathbf y) - \epsilon_1 \omega_{d-1}^{-1}  \norm{\mathbf y}_2^{-4})\, d\mathbf y \\
& = & \int_{1/(8 \norm{\mathbf{x}}_2)}^{1/4} r^{d-5} (s_d(r \norm{\mathbf x}_2) - \epsilon_1) \, dr \\
& \ge & \epsilon_1 \int_{1/(8 \norm{\mathbf{x}}_2)}^{1/4} r^{d-5}  \, dr . \qedhere
\end{eqnarray*}
\end{proof}
We now establish the following lower bounds corresponding to the upper bounds in Proposition \ref{p.r1ub}.
\begin{proposition} \label{p.r1lb}
 For each $d \in \N^*$, there exists positive reals $\delta_d,N_d,c_d$ such that
\[
 \mathbb{E}(\eta_{\mathbf{0}} - \eta_{\mathbf{x}})^2  \ge c_d  \psi_d(n, \norm{\mathbf x}_2)
\]
for all $n \ge N_d$ and for all $\mathbf x \in \mathbb{Z}^d_n \setminus \{ \mathbf 0 \}$  with $\norm{\mathbf x}_2< \delta_d n$, where $\psi_d$ is defined by \eqref{e.psi}.
\end{proposition}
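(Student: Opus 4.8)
The plan is to split by dimension, the cases $d\le 2$ and $d\ge 3$ drawing on entirely different inputs. For $d=1$ and $d=2$ the matching lower bounds have in effect already been established in the paragraphs above: \eqref{lb1} gives the claim for $d=1$ (with $N_1=1$, and $\delta_1$ any constant — harmless since $\norm{\mathbf x}_2\le n/\sqrt2$ on $\mathbb Z_n$), and \eqref{lb2} gives it for $d=2$ (with $N_2=4$, and $\delta_2$ likewise arbitrary because $\norm{\mathbf x}_2\le n/\sqrt2$ on $\mathbb Z^2_n$). So all the remaining work is for $d\ge 3$.

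For $d\ge 3$ I would simply chain two inequalities already in hand. Lemma~\ref{lem-highd-lb} furnishes positive reals $\delta,N,c_d$ with
\[
\int_{\mathbb R^d} H_{n,d,\mathbf x}(\mathbf y)\,d\mathbf y \;\ge\; c_d \int_{1/(8\norm{\mathbf x}_2)}^{1/4} r^{d-5}\,dr
\]
whenever $n\ge N$ and $\norm{\mathbf x}_2<\delta n$, and \eqref{compFeta} bounds $\mathbb E(\eta_{\mathbf 0}-\eta_{\mathbf x})^2$ below by a positive constant multiple of the left-hand integral. Combining them, it remains only to bound $\int_{1/(8\norm{\mathbf x}_2)}^{1/4} r^{d-5}\,dr$ below by a constant multiple of $\psi_d(n,\norm{\mathbf x}_2)$. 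The one structural point that makes this possible is that every nonzero element of $\mathbb Z^d_n$ has $\norm{\mathbf x}_2\ge 1$ (some coordinate is a nonzero integer), so the lower endpoint $1/(8\norm{\mathbf x}_2)$ is at most $1/8$; correspondingly $\psi_d$ is $n$-independent in these dimensions, matching the $n$-independence of the integral.

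The case analysis is then a one-line computation in each dimension: for $d=3$ the integral equals $8\norm{\mathbf x}_2-4\ge 4\norm{\mathbf x}_2=4\,\psi_3(n,\norm{\mathbf x}_2)$; for $d=4$ it equals $\log(2\norm{\mathbf x}_2)\ge\log(1+\norm{\mathbf x}_2)=\psi_4(n,\norm{\mathbf x}_2)$ (using $2R\ge 1+R$ for $R\ge1$); and for $d\ge5$ it equals $\tfrac{1}{d-4}\big((1/4)^{d-4}-(1/(8\norm{\mathbf x}_2))^{d-4}\big)\ge \tfrac{1}{d-4}\big((1/4)^{d-4}-(1/8)^{d-4}\big)>0$, a positive constant $=\psi_d(n,\norm{\mathbf x}_2)$. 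Taking $N_d=\max(N,4)$, $\delta_d=\delta$, and folding all multiplicative constants into $c_d$ completes the proof. I expect no real obstacle in the proposition itself: the genuine work lies upstream in Lemma~\ref{lem-highd-lb} — replacing the pointwise bound $\abs{\sin t}\le1$ (used for the upper bounds of Proposition~\ref{p.r1ub}) by the averaged lower bound of Lemma~\ref{lem-sine}, and making the convergence $H_{n,d,\mathbf x}\to H_{\infty,d,\mathbf x}$ quantitative and uniform in both $n$ and $\mathbf x$ (Lemma~\ref{lem-conv}) — after which the present statement is bookkeeping plus the integral evaluation above.
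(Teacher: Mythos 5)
Your proposal is correct and follows the same route as the paper: the paper's own proof simply cites \eqref{lb1} and \eqref{lb2} for $d=1,2$ and combines Lemma~\ref{lem-highd-lb} with \eqref{compFeta} for $d\ge 3$, leaving the evaluation of $\int_{1/(8\norm{\mathbf x}_2)}^{1/4} r^{d-5}\,dr$ and its comparison with $\psi_d$ (which hinges on $\norm{\mathbf x}_2\ge 1$) implicit, exactly the bookkeeping you carried out. The only nitpick is cosmetic: for $d=1$ the relevant bound is $\norm{\mathbf x}_2\le n/2$ rather than $n/\sqrt2$, which changes nothing since \eqref{lb1} holds for all nonzero $\mathbf x$, so any $\delta_1$ works.
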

\begin{proof}
The cases $d=1,2$ follow from \eqref{lb1} and \eqref{lb2} respectively. The case $d \geq 3$ follows from
 Lemma \ref{lem-highd-lb} along with \eqref{compFeta}.
\end{proof}

\begin{rem}\label{rem-sketch}
The condition $\norm{\mathbf x}_2 < \delta_d n$ that appears in the lower bound for the case $d \ge 3$ is somewhat unsatisfactory.
 We believe that the lower bound is true without any such an additional condition.
 However the lower bounds in the present form are good enough for our main application.
\old{\item[(b)]
 Recall that the reason behind the estimation of $\mathbb{E} (\eta_\mathbf 0 - \eta_\mathbf x)^2 $ is to estimate the quantity
 $M_{d}(n)=\mathbb E \max_{\mathbf x} \eta_{\mathbf x}$. Upper bounds on $\mathbb{E} (\eta_\mathbf 0 - \eta_\mathbf x)^2 $
 translate to upper bounds on $M_d(n)$ by Talagrand's equation 1.12. Suppose there is a method to give lower bounds  $M_{d}(n) \ge l_d(n)$ (for some function $l_d$) using \emph{strong} lower bounds on $\mathbb{E} (\eta_\mathbf 0 - \eta_\mathbf x)^2 $ without
 the $\norm{x}_2 < \delta n $ condition. Then the same method can be used to give lower bounds on $M_{d}(n)$ using our \emph{weaker} lower bounds. We simply use the inequality
 \[
M_{d}(n) \ge \mathbb E \max_{\mathbf x,\mathbf y \in \mathbb{Z}^d_n \cap B_\infty( \mathbf 0 , \frac{\delta}{2 \sqrt{d} } n)} \eta_{\mathbf x}  \ge l_d (\delta n/2 \sqrt d).
 \]
This will give the same lower bounds assuming $l_d(n)$ and $l_d (\delta n/2 \sqrt d)$ are of the same order as a function of $n$.
\end{itemize} }
\end{rem}

Next, we obtain lower bounds matching the upper bounds in Proposition \ref{p.r2ub}. We start by recalling notation and setup for Talagrand's majorizing measure \cite[Theorem 2.1.1]{Tal}.
We consider centered multivariate Gaussian random variables $(\eta_t)_{t \in T}$ indexed by a set $T$ with cardinality $\abs{T}$. An \emph{admissible sequence}   $\{ \mathcal{A}_k \}$ is an increasing sequence of partitions of $T$ such that $|\mathcal{A}_k| \leq \ttk$.
Here ``increasing sequence" refers to the fact that every set in $\mathcal{A}_{n+1}$ is contained in a set in $\mathcal{A}_n$.
We denote by $A_k(t)$ the unique element of $\mathcal{A}_n$ that contains $t \in T$. Recall that $d_\eta(t_1,t_2) = \left( \EE (\eta_{t_1}-\eta_{t_2})^2 \right)^{1/2}$ denotes
the Hilbert space metric induced by $(\eta_t)_{t \in T}$. We define the function
\[ \gamma_2 (T,d_\eta) = \inf \sup_{t \in T} \sum_{k=0}^\infty \operatorname{diam}_\eta(A_n(t))\]
where $\operatorname{diam}_\eta$ denotes the diameter in the $d_\eta$ metric and the infimum is taken over all admissible sequences.
The majorizing measure theorem  \cite[Theorem 2.1.1]{Tal} states that there is
 some universal constant $L$ for which
\begin{equation} \label{e.majormeas}
\frac{1}{L} \gamma_2(T, d) \leq \mathbb{E} \sup_{t \in T} \eta_t \leq L \gamma_2(T, d).
\end{equation}

\begin{proposition} \label{p.r2lb}
 For each $d \in \N^*$, there exists $c_d >0$ such that
 \begin{equation} \label{e.r2lb}
\EE \sup_{\mathbf x \in \Z^d_n} \eta_{\mathbf x} \ge c_d \phi_d(n)
 \end{equation}
for all $n \in \N^*$, where $\phi_d$ is defined by \eqref{e.phi}.
\end{proposition}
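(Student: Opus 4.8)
The plan is to derive \eqref{e.r2lb} from the lower half of the majorizing measure inequality \eqref{e.majormeas} --- in practice, from Sudakov's minoration --- applied at a single, dimension-dependent scale. First I would pass to a sub-box on which the sharp distance estimates of Proposition~\ref{p.r1lb} are available for \emph{every} pair of points. Since $\Z_n^d$ is vertex-transitive with constant degree $2d$, the covariance formula of Proposition~\ref{t.finite} shows that $\EE[\eta_{\mathbf x}\eta_{\mathbf y}]$ depends only on $\mathbf x-\mathbf y\pmod n$, so $\eta$ is a stationary Gaussian field and $d_\eta(\mathbf x,\mathbf y)$ depends only on the torus difference of $\mathbf x$ and $\mathbf y$. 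Let $\delta_d,N_d$ be the constants of Proposition~\ref{p.r1lb}, set $\ell=\lfloor \delta_d n/(2\sqrt d)\rfloor$ and $Q=\Z_n^d\cap[0,\ell]^d$. For $n$ large enough that $\ell\ge 2$, any $\mathbf x,\mathbf y\in Q$ have $\mathbf x-\mathbf y$ lying in the fundamental domain $(-n/2,n/2]^d$ with $\norm{\mathbf x-\mathbf y}_2\le\ell\sqrt d<\delta_d n$, so Proposition~\ref{p.r1lb} and stationarity give
\[
 d_\eta(\mathbf x,\mathbf y)^2=\EE(\eta_{\mathbf x}-\eta_{\mathbf y})^2\ \ge\ c_d\,\psi_d\!\left(n,\norm{\mathbf x-\mathbf y}_2\right)\qquad(\mathbf x,\mathbf y\in Q).
\]
Since $\phi_d(cn)\asymp\phi_d(n)$ for any fixed $c>0$, it is enough to bound $\EE\sup_{\mathbf x\in Q}\eta_{\mathbf x}$ below by a constant times $\phi_d(n)$.

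Next, choose a scale $m\in\{1,\dots,\ell\}$ and let $A=(m\Z)^d\cap Q$, so $|A|\ge(\ell/m)^d$ and distinct points of $A$ lie at $\ell_\infty$-distance $\ge m$. Because $\psi_d(n,\cdot)$ is nondecreasing on the range $[m,\ell\sqrt d]$ of distances occurring in $A$ (clear from \eqref{e.psi}, the only constraint being that for $d=2$ one keeps $m$ below $n/\sqrt e$, which holds since $\ell\sqrt d<\delta_d n/2$), every pair of distinct points of $A$ satisfies $d_\eta(\mathbf x,\mathbf y)\ge\varepsilon:=\sqrt{c_d\,\psi_d(n,m)}$. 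Sudakov's minoration (see \cite{Tal}), which follows from the lower bound in \eqref{e.majormeas}, then provides a universal $c>0$ with
\[
 \EE\sup_{\mathbf x\in Q}\eta_{\mathbf x}\ \ge\ c\,\varepsilon\,\sqrt{\log|A|}\ \ge\ c'_d\,\sqrt{\psi_d(n,m)}\,\sqrt{\log(\ell/m)}.
\]
It remains to choose $m$ optimally, case by case from \eqref{e.psi}. For $d\in\{1,2,3\}$ take $m=\lfloor\ell/2\rfloor\asymp n$: then $\log(\ell/m)\asymp1$ and $\sqrt{\psi_d(n,m)}\asymp\phi_d(n)$ (here the estimate collapses to the two-point bound $\EE\sup\eta\ge\frac{1}{\sqrt{2\pi}}\,d_\eta(\mathbf 0,\mathbf x)$). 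For $d=4$ take $m=\lfloor\sqrt n\rfloor$: then $\log(\ell/m)\asymp\log n$ and $\psi_4(n,m)=\log(1+m)\asymp\log n$, so the right-hand side is $\asymp\log n=\phi_4(n)$. For $d\ge5$ take $m=1$: then $\log(\ell/m)\asymp\log n$ and $\psi_d(n,m)\asymp1$, so the right-hand side is $\asymp\sqrt{\log n}=\phi_d(n)$. In each case this matches $\phi_d(n)$ up to a constant depending only on $d$, proving \eqref{e.r2lb} for all sufficiently large $n$; the finitely many remaining $n\ge2$ are covered by shrinking $c_d$, since $\EE\sup_{\mathbf x}\eta_{\mathbf x}\ge\frac{1}{\sqrt{2\pi}}\,d_\eta(\mathbf 0,\mathbf x)>0$ for each of them.

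The computations in the last step are routine; the substantive point is the \emph{intermediate} choice $m\asymp\sqrt n$ in dimension four. Using a single far pair ($m\asymp n$) loses a factor $\sqrt{\log n}$, and using all of $Q$ ($m\asymp1$) loses a factor $\sqrt{\log n}$ as well; the sharp order $\log n$ emerges only after balancing the increment size $\varepsilon\asymp\sqrt{\log m}$ against the entropy $\sqrt{\log(\ell/m)}$, which is the genuinely four-dimensional feature of the estimate. The only other thing to check is the monotonicity of $\psi_d(n,\cdot)$ on the relevant range of distances, noted above; beyond that, a one-scale application of Sudakov minoration suffices in every dimension.
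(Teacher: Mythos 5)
Your proof is correct, and it takes a somewhat different (and more elementary) route than the paper for $d\le 4$. The paper works directly with the lower half of the majorizing measure theorem \eqref{e.majormeas}: for $d\le 3$ it extracts the $k=0$ diameter term (one of the at most two sets in $\mathcal{A}_0$ has graph diameter of order $n$), for $d=4$ it takes the level $k\approx\log_2\log_2\abs{T}$ and pigeonholes on partition cardinality to find a cell of graph diameter $\gtrsim\sqrt n$, and only for $d\ge 5$ does it invoke Sudakov minoration. You instead use a single-scale Sudakov bound in every dimension, building an explicit $m$-separated grid inside a sub-box of side $\asymp\delta_d n$ with $m\asymp n$ for $d\le 3$, $m\asymp\sqrt n$ for $d=4$, and $m=1$ for $d\ge 5$; in substance this is the same entropy-versus-separation balancing (your $d=4$ scale $\sqrt n$ matches the paper's cell diameter $\sqrt n$), but it avoids the $\gamma_2$/admissible-partition machinery entirely, which makes the argument more self-contained, at the cost of having to note explicitly the stationarity of $\eta$ on the torus (to pass from $\EE(\eta_{\mathbf 0}-\eta_{\mathbf x})^2$ to $d_\eta(\mathbf x,\mathbf y)$) and the monotonicity of $\psi_d(n,\cdot)$ on the relevant range --- both of which you check, and both of which the paper uses implicitly anyway. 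Your treatment of the remaining small $n$ by shrinking $c_d$ matches the paper's ``it suffices to take $n$ large'' reduction (both elide the degenerate case $n=1$), so the two proofs cover the same ground, with yours trading the heavier cited theorem for a cleaner one-scale application of Sudakov.
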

\begin{proof}
 The strategy is to use the lower bound for $d_\eta$ given by Proposition \ref{p.r1lb} along with \eqref{e.majormeas} where $T= \{ \mathbf x \in \Z^d_n: \norm{\mathbf{x}}_2 < \delta_d n \} $ and
 $\EE \sup_{t \in \Z^d_n} \eta_t \ge \EE \sup_{t \in T} \eta_t $. Note that it suffices to  show \eqref{e.r2lb} for large enough $n$, \emph{i.e.} $n > N_d$ for some fixed $N_d$.

 For $d=1,2,3$, by \eqref{e.majormeas}  we have
 \begin{equation} \label{e.llb0}
  \EE \sup_{t \in \Z^d_n} \eta_t \ge L^{-1} \inf \sup_{t \in T} \operatorname{diam}_\eta (A_0(t)).
 \end{equation}
Since $\abs{ \mathcal{A}_0} \le 2$, we have $ \sup_{t \in T}\operatorname{diam}_G (A_0(t)) \ge c_0 n$ for some $c_0>0$. Therefore by Proposition \ref{p.r1lb} along with \eqref{e.llb0}  we obtain the desired result.

For $d=4$, by \eqref{e.majormeas} we have
\begin{equation}\label{e.llb1}
   \EE \sup_{t \in \Z^d_n} \eta_t \ge L^{-1} \inf \sup_{t \in T} 2^{k/2} \operatorname{diam}_\eta (A_k(t)).
\end{equation}
where $k = \left\lfloor\log_2 \log_2 \abs{T}\right\rfloor-1$. This gives $2^{k/2}  \ge c_0 \sqrt{\log n}$ for some $c_0 >0$. Moreover, $k = \left\lfloor\log_2 \log_2 \abs{T}\right\rfloor-1$ and $\abs{\mathcal{A}_k} \le \ttk$
implies that at least one of the sets $A_k(t)$ has cardinality greater than or equal to $\sqrt{\abs{T}}$, which in turn implies $\operatorname{diam}_G(A_k(t)) \ge c_1 \sqrt{n}$ for some $c_1>0$.
By Proposition \ref{p.r1lb}, we obtain $\operatorname{diam}_G(A_k(t)) \ge c_2 \sqrt{\log n}$ for some $c_2>0$ and for large enough $n$. The conclusion for $d=4$ then follows from \eqref{e.llb1}.

 The case $d \ge 5$ is a direct consequence of Sudakov minoration (\cite[Lemma 2.1.2]{Tal}).
\end{proof}

\begin{proof}[Proof of Proposition \ref{p.torus}]
 The upper and lower bounds follow from Propositions \ref{p.r2ub} and \ref{p.r2lb}.
\end{proof}

\section*{Acknowledgment}

We thank Daqian Sun for proofreading part of an early draft, and the referee for suggestions that improved the paper.


\begin{thebibliography}{99}
\bibitem[BTW87]{BTW87} Per Bak, Chao Tang and Kurt Wiesenfeld,
Self-organized criticality: an explanation of the $1/f$ noise,
{\it Phys.\ Rev.\ Lett.} {\bf 59}(4):381--384, 1987.
\bibitem[BGMN05]{BGMN} Frank Barthe, Olivier Gu\'{e}don, Shahar Mendelson, and Assaf Naor, A probabilistic approach to the geometry of the $\ell_p^n$-ball,  \emph{Ann.\ Probab.} 33 (2005), no. 2, 480--513. \mr{2123199}
\bibitem[Bil95]{Bil}  P. Billingsley,
\emph{Probability and measure},  Third edition. Wiley Series in Probability and Mathematical Statistics. John Wiley \& Sons, Inc., New York, 1995. xiv+593 pp. ISBN: 0-471-00710-2
\mr{1324786}
\bibitem[Dha90]{Dha90} Deepak Dhar, Self-organized critical state of sandpile automaton models, {\it Phys.\ Rev.\ Lett.} {\bf 64}:1613--1616, 1990.
\bibitem[Ent87]{Ent} Aernout C. D. van Enter, Proof of Straley's argument for bootstrap percolation, \emph{J. Stat.\ Phys.} 48(3)943--945, 1987.
\bibitem[FLP10]{FLP} Anne Fey, Lionel Levine and Yuval Peres, Growth rates and explosions in sandpiles, \emph{J. Stat.\ Phys.} 138 (2010), no. 1-3, 143--159. \mr{2594895}
\bibitem[FMR09]{FMR} Anne Fey, Ronald Meester and Frank Redig,  Stabilizability and percolation in the infinite volume sandpile model,  \emph{Ann.\ Probab.} 37 (2009), no. 2, 654--675. \mr{2510019}
\bibitem [FR05]{FR} Anne Fey-den Boer and Frank Redig,  Organized versus self-organized criticality in the abelian sandpile model, \emph{Markov Process.\ Related Fields} \textbf{11}(3):425--442, 2005. \mr{2175021}
\bibitem[HS93]{HS} W. Hebisch and L. Saloff-Coste, Gaussian estimates for Markov chains and random walks on groups, \emph{Ann.\ Probab.} \textbf{21}(2):673--709, 1993.  \mr{1217561}
\bibitem[JLS13]{JLS13} David Jerison, Lionel Levine and Scott Sheffield, Internal DLA in higher dimensions,
\emph{Electron.\ J. Probab.} \textbf{18}(98):1--14, 2013. \mr{3141799}
\bibitem[Law96]{Law} Gregory F. Lawler, \textit{Intersections of Random Walks}, Probability and its Applications. Birkh\"{a}user Boston, Inc., Boston, MA, 1991. 219 pp. ISBN: 0-8176-3557-2. \mr{1117680}
\bibitem[LPW09]{LPW} David A. Levin. Yuval Peres and Elizabeth L. Wilmer, {\em Markov Chains and Mixing Times}, With a chapter by James G. Propp and David B. Wilson. American Mathematical Society, Providence, RI, 2009. xviii+371
pp. ISBN: 978-0-8218-4739-8 \mr{2466937}
\bibitem[LPS12]{LPS12} Lionel Levine, Wesley Pegden and Charles K. Smart, Apollonian structure in the abelian sandpile. \arxiv{1208.4839}
\bibitem[LP09]{LP09} Lionel Levine and Yuval Peres, Strong spherical asymptotics for rotor-router aggregation and the divisible sandpile, \emph{Potential Anal.} \textbf{30}:1--27, 2009. \mr{2465710}
\bibitem[LP10]{LP10} Lionel Levine and Yuval Peres, Scaling limits for internal aggregation models with multiple sources.
\emph{J. Anal.\ Math.} \textbf{111}:151--219, 2010. \mr{2747064} 
\bibitem[MQ05]{MQ} Ronald Meester and Corrie Quant, Connections between `self-organized' and `classical' criticality,  \emph{Markov Process.\ Related Fields} \textbf{11}:355--370, 2005.
\bibitem[Sal96]{S-C} Laurent Saloff-Coste, \textit{Lectures on finite Markov chains}, Lectures on probability theory and statistics (Saint-Flour, 1996), 301--413, \mr{1490046}
\bibitem[Sch92]{Sch} R. H. Schonmann, On the behavior of some cellular automata related to bootstrap percolation, \textit{Ann.\ Probab.} \textbf{20}:174--193, 1992.
\bibitem[SW13]{SW13} Xin Sun and Wei Wu, Uniform Spanning Forests and the bi-Laplacian Gaussian field. \arxiv{1312.0059}
\bibitem [Tal05]{Tal} Michel Talagrand,  \textit{The generic chaining}, Upper and lower bounds of stochastic processes. Springer Monographs in Mathematics. Springer-Verlag, Berlin, 2005. viii+222 pp. ISBN: 3-540-24518-9,  \mr{2133757}
\bibitem[Wil91]{Williams} David Williams, \textit{Probability with Martingales}, Cambridge Mathematical Textbooks. Cambridge University Press, Cambridge, 1991. xvi+251 pp. ISBN: 0-521-40455-X; 0-521-40605-6, \mr{1155402}
\end{thebibliography}
\end{document}